\documentclass[12pt]{amsart}
\usepackage{verbatim,amssymb,latexsym,amscd}
\usepackage[all]{xy}
\usepackage{color}

\usepackage[colorlinks,linkcolor=blue,citecolor=blue,urlcolor=red]{hyperref}

\newcommand{\ie}{{\it i.e. }}
\newcommand{\cf}{{\it cf. }}
\newcommand{\eg}{{\it e.g. }}
\newcommand{\loccit}{{\it loc. cit.}}
\newcommand{\resp}{{\it resp. }}

\newcommand{\A}{\mathbf{A}}

\newcommand{\C}{\mathbf{C}}

\renewcommand{\P}{\mathbf{P}}
\newcommand{\Q}{\mathbf{Q}}

\newcommand{\Z}{\mathbf{Z}}
\newcommand{\sA}{\mathcal{A}}
\newcommand{\sB}{\mathcal{B}}
\newcommand{\sC}{\mathcal{C}}
\newcommand{\sD}{\mathcal{D}}
\newcommand{\sE}{\mathcal{E}}
\newcommand{\sF}{\mathcal{F}}
\newcommand{\sG}{\mathcal{G}}
\newcommand{\sH}{\mathcal{H}}
\newcommand{\sI}{\mathcal{I}}

\newcommand{\sK}{\mathcal{K}}
\newcommand{\sL}{\mathcal{L}}
\newcommand{\sM}{\mathcal{M}}
\newcommand{\sO}{\mathcal{O}}
\newcommand{\sR}{\mathcal{R}}
\newcommand{\sS}{\mathcal{S}}
\newcommand{\sT}{\mathcal{T}}
\newcommand{\sU}{\mathcal{U}}
\newcommand{\sX}{\mathcal{X}}

\newcommand{\bG}{\mathbb{G}}
\newcommand{\bH}{\mathbb{H}}
\newcommand{\bL}{\mathbb{L}}

\newcommand{\uC}{\underline{C}}
\newcommand{\Spec}{\operatorname{Spec}}
\newcommand{\Ker}{\operatorname{Ker}}
\newcommand{\Coker}{\operatorname{Coker}}
\newcommand{\IM}{\operatorname{Im}}

\newcommand{\DM}{\operatorname{\bf DM}}

\newcommand{\Hom}{\operatorname{Hom}}
\newcommand{\End}{\operatorname{End}}
\newcommand{\CM}{\operatorname{\bf CM}}
\newcommand{\Mod}{\text{\rm Mod--}}

\newcommand{\Pic}{\operatorname{Pic}}

\newcommand{\Nis}{{\operatorname{Nis}}}

\newcommand{\uHom}{\operatorname{\underline{Hom}}}
\newcommand{\Ext}{\operatorname{Ext}}
\newcommand{\Tor}{\operatorname{Tor}}

\newcommand{\Ab}{\operatorname{\bf Ab}}

\newcommand{\car}{\operatorname{char}}
\newcommand{\rat}{{\operatorname{rat}}}

\newcommand{\ind}{{\operatorname{ind}}}

\newcommand{\nr}{{\operatorname{nr}}}

\newcommand{\Sm}{\operatorname{\bf Sm}}

\newcommand{\proj}{{\operatorname{proj}}}

\newcommand{\SmCor}{\operatorname{\bf SmCor}}
\newcommand{\Cor}{\operatorname{\bf Cor}}
\newcommand{\BFC}{\operatorname{\bf BFC}}
\newcommand{\PST}{\operatorname{\bf PST}}
\newcommand{\NST}{\operatorname{\bf NST}}
\newcommand{\HI}{\operatorname{\bf HI}}
\newcommand{\PHI}{\operatorname{\bf PHI}}
\newcommand{\Tot}{\operatorname{Tot}}

\newcommand{\Chow}{\operatorname{\bf Chow}}
\newcommand{\gm}{{\text{\rm gm}}}
\newcommand{\equi}{{\text{\rm equi}}}

\renewcommand{\o}{{\text{\rm o}}}
\renewcommand{\b}{{\text{\rm b}}}
\newcommand{\op}{{\text{\rm op}}}
\newcommand{\eff}{{\text{\rm eff}}}

\newcommand{\by}[1]{\overset{#1}{\longrightarrow}}
\newcommand{\yb}[1]{\overset{#1}{\longleftarrow}}
\newcommand{\iso}{\by{\sim}}
\newcommand{\osi}{\yb{\sim}}
\newcommand{\inj}{\hookrightarrow}
\newcommand{\Inj}{\lhook\joinrel\longrightarrow}
\newcommand{\surj}{\rightarrow\!\!\!\!\!\rightarrow}
\newcommand{\Surj}{\relbar\joinrel\surj} 
\newcommand{\colim}{\varinjlim}
\renewcommand{\lim}{\varprojlim}
\newcommand{\hocolim}{\operatornamewithlimits{hocolim}}

\newcommand{\codim}{\operatorname{codim}}

\renewcommand{\qed}{\hfill $\Box$\medskip}

\renewcommand{\phi}{\varphi}
\renewcommand{\epsilon}{\varepsilon}

\newcounter{spec}
\newenvironment{thlist}{\begin{list}{\rm{(\roman{spec})}}%
{\usecounter{spec}\labelwidth=20pt\itemindent=0pt\labelsep=10pt}}%
{\end{list}}%

\setcounter{tocdepth}{1}

\newtheorem{Th}{Theorem}

\swapnumbers

\newtheorem{thm}{Theorem}[subsection]
\newtheorem{lemma}[thm]{Lemma}
\newtheorem{prop}[thm]{Proposition}
\newtheorem{cor}[thm]{Corollary}

\theoremstyle{definition}

\newtheorem{defn}[thm]{Definition}

\newtheorem{rk}[thm]{Remark}
\newtheorem{rks}[thm]{Remarks}

\newtheorem{ex}[thm]{Example}

\numberwithin{equation}{section}

\entrymodifiers={!!<0pt,0.7ex>+}

\hfuzz=5pt
\vfuzz=5pt

\begin{document}
\title{Birational motives, II: Triangulated birational motives}
\author{Bruno Kahn}
\address{IMJ-PRG\\ 4 Place Jussieu\\75010 Paris\\France}
\email{bruno.kahn@imj-prg.fr}
\author{R. Sujatha}
\address{University of British Columbia\\Vancouver, BC
V6T1Z2\\Canada}
\email{sujatha@math.ubc.ca}
\date{April 18, 2016}
\thanks{The first author acknowledges the support of Agence Nationale de la Recherche (ANR) under reference ANR-12-BL01-0005 and the second author  that of NSERC Grant 402071/2011. Both authors acknowledge the support of CEFIPRA project 2501-1.}
\subjclass[2010]{14C15, 18E30, 14E05}
\begin{abstract}
We develop birational versions of Voevodsky's triangulated categories of motives over a field, and relate them with the pure birational motives studied in \cite{birat-pure}. We also get an interpretation of unramified cohomology in this framework, leading to ``higher derived functors of unramified cohomology''.
\end{abstract}
\maketitle

\tableofcontents

\section*{Introduction}

This is the last part of our project on birational motives: it develops a birational analogue to Voevodsky's theory of triangulated motives. We work over a field $F$. A summary of our results may be read in the following commutative diagram of categories:

\begin{equation*}\label{eq.intr}
\xymatrix{
\Chow^\eff\ar[d]\ar[r]&\DM_\gm^\eff\ar[d]\ar[r] &\DM^\eff\ar[d]^{\nu_{\le 0}}\\
\Chow^\o\ar[r]&\DM_\gm^\o\ar[r]& \DM^\o.
}
\end{equation*}

In the top row, $\Chow^\eff$ is the category of effective Chow motives over $F$ (with integer coefficients), $\DM_\gm^\eff$ is Voevodsky's triangulated category of effective geometric motives \cite[\S 2]{voetri} and $\DM^\eff$ is an unbounded version of his triangulated category of motivic complexes \cite[\S 3]{voetri}.

In the bottom row, $\Chow^\o$ is the category of pure birational motives introduced in \cite[Def. 2.3.5]{birat-pure}: if $X,Y$ are smooth projective varieties with motives $h^\o(X),h^\o(Y)\in \Chow^\o$, we have an isomorphism $\Hom(h^\o(X),h^\o(Y))=CH_0(Y_{F(X)})$. The categories $\DM_\gm^\o$ and $\DM^\o$ are respectively obtained from $\DM_\gm^\eff$ and $\DM^\eff$ by inverting birational morphisms (Definition \ref{d4.1}).

When $F$ is perfect, the functors in the top row are full embeddings as a consequence of Voevodsky's main theorems on homotopy invariant pretheories \cite{voepre}; by the same theorems, $\DM^\eff$ enjoys a canonical ``homotopy $t$-structure''. All these facts turn out to be true also in the bottom row, without assuming $F$ perfect (Theorem \ref{t3.0}); their proofs are much more elementary and don't rely on the results of \cite{voepre}.

The heart of the homotopy $t$-structure on $\DM^\o$ is the category $\HI^\o$ of \emph{birational presheaves with transfers}: these are simply the presheaves with transfers of \cite[Def. 3.1.1]{voetri} which invert birational morphisms. This abelian category has truly excellent properties:

\begin{itemize}
\item $\HI^\o$ is a category of modules over an additive category; as such it has enough injectives, enough projectives, exact infinite direct sums and (quite unusually) exact infinite direct products (\cf Proposition \ref{p5.3}).
\item A birational presheaf with transfers is automatically a Nisnevich sheaf, and is homotopy invariant; it has no higher Nisnevich cohomology (Proposition \ref{l14.1}).
\end{itemize}

The functor $\nu_{\le 0}$ has a right adjoint $i^\o$, which in turn has a right adjoint $R_\nr$. When $F$ is perfect, one can compare the $t$-structures of $\DM^\eff$ and $\DM^\o$: then $\nu_{\le 0}$ is right $t$-exact, $i^\o$ is $t$-exact and $R_\nr$ is left $t$-exact. In particular, if $\sF$ is a homotopy invariant Nisnevich sheaf with transfers, the complex $R_\nr \sF$ is concentrated in degrees $\ge 0$. We compute its $0$-cohomology sheaf $R^0_\nr \sF$ as follows:

\begin{Th}[\cf Theorem \protect{\ref{t6.2}}] For any smooth connected $F$-variety $X$, one has
\[R^0_\nr\sF(X) =  \Ker\left(\sF(K)\by{(\partial_v)} \prod_v \sF_{-1}(F(v))\right)\]
where $K=F(X)$ and $v$ runs through all the $F$-divisorial valuations on $K$. Here $\sF_{-1}$ denotes Voevodsky's contraction of $\sF$, \cf \S \ref{s.contr}. 
\end{Th}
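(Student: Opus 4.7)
Let $\sF^{\nr}$ denote the presheaf on smooth connected $F$-varieties defined by the right hand side of the displayed formula; this is the classical sheaf of unramified sections of $\sF$. To identify $\sF^{\nr}$ with $R^0_\nr \sF$ I will verify (a) $\sF^\nr\in\HI^\o$; (b) there is a canonical monomorphism $i^\o\sF^\nr \inj \sF$ in $\HI$; and (c) composition with this map induces, for every $\sG\in\HI^\o$, a bijection
$$\Hom_{\HI^\o}(\sG,\sF^\nr) \iso \Hom_{\HI}(i^\o\sG,\sF).$$
Combined with the adjunction $i^\o\dashv R_\nr$ and the left $t$-exactness of $R_\nr$ stated in the excerpt, this forces $\sF^\nr = R^0_\nr \sF$.

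\textbf{Steps (a) and (b).} Because $\sF^\nr(X)$ depends only on $K=F(X)$, the presheaf $\sF^\nr$ inverts birational morphisms. The transfer structure and Nisnevich-sheaf property descend from $\sF$, using the compatibility of the residue maps $\partial_v$ and the contractions $\sF_{-1}$ with finite correspondences (a fact from Voevodsky's theory; \cf \S\ref{s.contr}). For (b), Voevodsky's Gersten resolution (valid since $F$ is perfect) identifies
$$\sF(X)=\Ker\Bigl(\sF(K)\by{(\partial_x)}\bigoplus_{x\in X^{(1)}}\sF_{-1}(F(x))\Bigr),$$
and since the codimension-one points of $X$ are among the $F$-divisorial valuations of $K$, we deduce $\sF^\nr(X)\subseteq \sF(X)$. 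This containment is the desired monomorphism.

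\textbf{Step (c).} Injectivity is immediate since $\sF^\nr\inj\sF$ is a monomorphism in $\HI$ and $i^\o$ is fully faithful. For surjectivity, take $\sG\in\HI^\o$ and $\phi:i^\o\sG\to\sF$; fix $X$ smooth connected, $\alpha\in\sG(X)=\sG(K)$, and an $F$-divisorial valuation $v$ of $K$. Choose a smooth $F$-variety $Y$ with $F(Y)=K$ on which $v$ corresponds to a codimension-one point $y$, invoking Hironaka in characteristic zero or the smooth-projective domination techniques of the earlier parts of this series in positive characteristic. Birationality of $\sG$ gives $\sG(Y)=\sG(K)$, so $\alpha$ defines an element of $\sG(Y)$; naturality of $\phi$ then ensures that $\phi_Y(\alpha)\in\sF(Y)$ and $\phi_X(\alpha)\in\sF(X)$ have the same image in $\sF(K)$. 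Applying the Gersten resolution to $Y$ yields $\partial_v\phi_X(\alpha)=\partial_y\phi_Y(\alpha)=0$, whence $\phi_X(\alpha)\in\sF^\nr(X)$ and $\phi$ factors through $\sF^\nr$.

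\textbf{Main obstacle.} The essential technical input is the production, for each $F$-divisorial valuation $v$ on $K$, of a smooth model of $K$ realizing $v$ as a divisor; everything else is formal from the adjunction, the birational invariance of unramified sections, and Voevodsky's Gersten resolution.
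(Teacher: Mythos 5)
Your overall architecture (show the unramified presheaf lies in $\HI^\o$, embed it in $\sF$ via Gersten, then invoke the universal property of the right adjoint) is the same as the paper's, and your steps (b) and (c) are essentially the paper's step (1) and its concluding appeal to the universal property of $R^0_\nr\sF$. But step (a) contains a genuine gap: you dispose of the claim that $X\mapsto\sF_\nr(X)$ is a \emph{presheaf with transfers} by appealing to ``compatibility of the residue maps $\partial_v$ and the contractions $\sF_{-1}$ with finite correspondences,'' and no such off-the-shelf fact does the job. A finite correspondence $Z\subset Y\times X$, restricted to $\eta_Y$, is a zero-cycle on $X_{F(Y)}$ whose points need not dominate $X$, so $\phi^*$ does not act on $\sF(F(X))$ at the level of function fields alone; one must work with models. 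The paper devotes most of \S\ref{s.unram} to exactly this point: it factors $\phi^*$ through a dominant projection (Lemma \ref{l6.6}), a regular embedding written locally as a chain of codimension-one closed immersions (Lemma \ref{l6.9}), and the transpose of a finite map (Lemma \ref{l6.7}). The Main Lemma \ref{l6.9} is the crux: to show $\partial_v(i^*\alpha)=0$ for $i:Y\inj X$ a codimension-one closed immersion and $v$ divisorial on $F(Y)$, one forms the composite rank-two valuation $u=v\circ w$ on $F(X)$ and needs a smooth model $W$ of $F(X)$ containing a smooth divisor $Z$ with $\sO_{W,Z}\simeq\sO_w$ and $v$ finite on $Z$ --- this is Proposition \ref{l6.8}, a consequence of the Knaf--Kuhlmann local uniformization theorem for Abhyankar places. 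Nothing in Voevodsky's theory of contractions substitutes for this geometric input.

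Relatedly, you misidentify the ``main obstacle'': producing, for a single divisorial valuation $v$ on $K$, a smooth model realizing $v$ as a divisor is the \emph{definition} of a divisorial valuation (no Hironaka or alteration argument is needed there, in your step (c) either). The real obstacle is the simultaneous uniformization of a \emph{composite} of divisorial valuations, which is what makes restriction along closed immersions --- and hence the action of general finite correspondences --- preserve unramifiedness. Once transfers are in place, birational invariance plus Proposition \ref{l14.1} a) do give the Nisnevich sheaf and homotopy invariance properties for free, as you say.
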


Thus we recover unramified cohomology in the sense of Colliot-Th\'el\`ene--Ojanguren \cite{ct}: this was one of the initial aims of our project, which was not achieved in the 2002 preprint version \cite{birat}. This also shows that unramified cohomology has, in some fashion, \emph{higher derived functors} which define new birational invariants: these functors are partly studied in \cite{rqnr}.

Given the long period of gestation of this work, there have been other expositions of triangulated birational motives, notably in \cite{motiftate} and \cite{kl}: they are essentially independent from the present one. We would like to finish this introduction by pointing a mistake in the initial version:

In \cite{birat}, Theorem 7.7, Corollary 7.8 and Corollary 7.9 c) are false: see Remark \ref{r5.2} below as concerns Theorem 7.7. The ``upper half'' of Corollary 7.9 c) remains true, as in Theorem \ref{t3.0} e). The contents of \S \ref{r5.1} may be viewed as a comment on this mistake.

\subsection*{Acknowledgements} While writing this paper, we benefited from discussions and exchanges with a large number of colleagues. We would like to especially thank Joseph Ayoub, Alexander Beilinson,  Fr\'ed\'eric D\'eglise, Eric Friedlander, Dennis Gaitsgory, Jens
Hornbostel, Annette Huber-Klawitter, Bernhard Keller, Marc Levine,
Geor\-ges Maltsiniotis, Fabien Morel, Amnon Neeman, Jo\" el Riou, Rapha\"el Rou\-quier, Vla\-di\-mir Voevodsky and Chuck Weibel. We also thank the referees for helpful comments.

\subsection{Notation} $F$ is the base field. All varieties are
$F$-varieties and all morphisms are $F$-morphisms. If $X$ is an irreducible variety,
$\eta_X$ denotes its generic point. We write $\Sm$ for the category of smooth varieties (= smooth separated $F$-schemes of finite type).

If $A$ is an abelian group and $p$ is a prime number, we write $A[1/p]:=A\otimes \Z[1/p]$. If $\sC$ is a category and $X,Y$ are objects of $\sC$, we write $\sC(X,Y)$ or $\Hom_\sC(X,Y)$ for the set of morphisms from $X$ to $Y$, depending on which of these notations is most convenient. If $\sA$ is an additive category, we write $\sA^\natural$ for its pseudo-abelian envelope;  if $p$ is a prime number, we write $\sA[1/p]$ for the category with the same objects and Hom groups given by $\sA[1/p](A,B)=\sA(A,B)[1/p]$.

\section{Birational sheaves with transfers}\label{s1.5}

In this section, we study modules over the category $\BFC$ of \emph{birational finite correspondences}, which is (Definition \ref{d1.1}) the localisation of Voevodsky's category of finite correspondences obtained by inverting birational morphisms. This category has several incarnations; elementary ones are given in Theorem \ref{p1.1}, while less elementary ones will be given in Proposition \ref{p2.6} and Theorem \ref{t2.2}. This is what gives a pivotal r\^ole to the category $\HI^\o=\Mod\BFC$ of \emph{birational presheaves with transfers}. These presheaves turn out to be automatically sheaves for the Nisnevich topology, and homotopy invariant; moreover they are acyclic for Nisnevich cohomology (Proposition \ref{l14.1}). When $F$ is perfect, they can be characterised as those homotopy invariant Nisnevich sheaves with transfers whose Voevodsky  contraction vanishes (Proposition  \ref{p2.3}).

\subsection{Birational finite correspondences} \label{s1.0} We start from the category  $\SmCor$ introduced by Voevodsky in \cite{voetri}; its objects are smooth $F$-varieties and its morphisms are \emph{finite correspondences}: for $X,Y\in \SmCor$, $\SmCor(X,Y)$ is the free abelian group $c(X,Y)$ with basis the set of closed integral subschemes of $X\times Y$ which are finite and surjective over a connected component of $X$. In \cite{mvw}, the notation was changed from $\SmCor$ to $\Cor$: we retain the original notation to avoid confusion with Chow correspondences between smooth projective varieties, which are also used here (see \S \ref{s1.1}). In contrast to the latter, finite correspondences compose ``on the nose'' \cite[Lemma 1.7]{mvw}; together with the product of varieties and cycles, they make $\SmCor$ an additive $\otimes$-category. The ``graph'' functor $\Sm\to \SmCor$ is the identity on objects and sends a morphism to its graph.

\begin{defn}\label{d1.1} The category of \emph{birational finite correspondences} is $\BFC=S_b^{-1}\SmCor$, where $S_b$ is the class of (graphs of) birational morphisms.\footnote{\label{f1}More correctly, a morphism $f:X\to Y$ is in $S_b$ if it is dominant and its restriction to any connected component of $X$ yields a birational morphism to some connected component of $Y$; we would get the same localisation by using the class $S_\o$ of dense open immersions. See \cite[1.7]{Birat} and \cite[2.1]{birat-pure} for detailed discussions.}
\end{defn}

We note that $S_b$ is closed under disjoint unions and products; hence, by \cite[Prop. A.1.2 and Th. A.3.1 ]{birat-pure}, the additive and tensor structures of $\SmCor$ pass to $\BFC$.

We shall use other incarnations of $\BFC$. It is helpful to use the category of \cite[Def. 2.25]{mvw}: its objects are smooth varieties and morphisms between two smooth varieties $X,Y$ are given by
\begin{equation}\label{eq1.3}\begin{CD}
h_0(X,Y) = \Coker(c(X\times \A^1,Y)@>{i_1(X)^*-i_0(X)^*}>> c(X,Y))
\end{CD}\end{equation}
where $i_t:\Spec k\to \A^1$ is the inclusion of the point $t$ and $i_t(X)=i_t\times 1_X$. We denote this category by  $\sH(\SmCor)$. 

\begin{lemma}\label{l1.1} Let $S_h$ be the class of projections $\pi_X:X\times \A^1\to X$. Then $\sH(\SmCor)$ is isomorphic to $S_h^{-1} \SmCor$.
\end{lemma}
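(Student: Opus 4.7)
The plan is to verify the isomorphism by universal properties, constructing mutually inverse functors between the two categories (both of which are the identity on objects).

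First I would construct a functor $L\colon \SmCor \to \sH(\SmCor)$ that is the identity on objects and sends each correspondence to its class in $h_0(-,-)$; well-definedness of composition on the quotient groups $h_0(X,Y)$ is standard and uses that if $\alpha \in c(X\times \A^1,Y)$ and $\beta\in c(Y,Z)$, then $\beta\circ \alpha \in c(X\times \A^1,Z)$ restricts correctly at $i_0,i_1$, and symmetrically for pre-composition. The key claim is that $L$ inverts every projection $\pi_X \in S_h$. I would show this by exhibiting an explicit $\A^1$-homotopy: consider the morphism
\[
H\colon (X\times \A^1)\times \A^1 \To X\times \A^1, \quad (x,s,t)\mapsto (x,st),
\]
viewed as an element of $c((X\times \A^1)\times \A^1,\, X\times \A^1)$. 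Its pullback along $i_0(X\times \A^1)$ is $i_0\circ \pi_X$, while its pullback along $i_1(X\times \A^1)$ is $1_{X\times \A^1}$, so $[i_0\circ \pi_X] = [1_{X\times \A^1}]$ in $h_0(X\times \A^1, X\times \A^1)$. Combined with the evident identity $\pi_X\circ i_0 = 1_X$, this shows $\pi_X$ becomes an isomorphism in $\sH(\SmCor)$ with inverse $i_0$. By the universal property of Gabriel--Zisman localisation, $L$ factors uniquely through $S_h^{-1}\SmCor$, yielding a functor $\bar L\colon S_h^{-1}\SmCor \to \sH(\SmCor)$.

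For the reverse direction, let $\ell\colon \SmCor \to S_h^{-1}\SmCor$ be the localisation functor. Since $\ell(\pi_X)$ is invertible and $\pi_X\circ i_0 = \pi_X\circ i_1 = 1_X$, we get $\ell(i_0) = \ell(i_1) = \ell(\pi_X)^{-1}$ in $S_h^{-1}\SmCor$. Consequently, for any $\alpha\in c(X\times \A^1,Y)$,
\[
\ell(i_0^*\alpha) = \ell(\alpha)\circ \ell(i_0) = \ell(\alpha)\circ \ell(i_1) = \ell(i_1^*\alpha),
\]
so $\ell$ kills the image of $i_1(X)^* - i_0(X)^*$ and thus descends to a functor $M\colon \sH(\SmCor) \to S_h^{-1}\SmCor$, identity on objects.

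Finally I would check that $\bar L$ and $M$ are mutually inverse. Both are the identity on objects, so it suffices to compare their effect on morphism groups. At the level of $\SmCor$, all three categories receive compatible maps from $c(X,Y)$, and by construction the composition $\bar L \circ M$ (resp. $M\circ \bar L$) restricts on these generators to the defining quotient maps, forcing it to be the identity. The only non-formal ingredient is the $\A^1$-homotopy computation above, which is the main (minor) obstacle: once $\pi_X$ is shown to become invertible in $\sH(\SmCor)$, the rest is a mechanical universal-property argument.
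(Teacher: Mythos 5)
Your proposal is correct and follows essentially the same route as the paper: both arguments rest on the two facts that $\pi_X$ becomes invertible in $\sH(\SmCor)$ (which the paper cites from the comment after \cite[Def.\ 2.25]{mvw} and you verify directly with the homotopy $(x,s,t)\mapsto(x,st)$) and that inverting $\pi_X$ forces $i_0$ and $i_1$ to be identified since both are sections of $\pi_X$. The paper phrases this as the two categories having each other's universal property, which is exactly what your explicit mutually inverse functors implement.
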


\begin{proof} Let us show that $\sH(\SmCor)$ and $S_h^{-1} \SmCor$ have each other's universal property. Let $F:\SmCor\to \sC$ be a functor. If $F$ factors through $S_h^{-1} \SmCor$, then $F(i_0(X))=F(i_1(X))$ for any $X$, as both are inverse to $F(\pi_X)$. Hence $F$ factors through $\sH(\SmCor)$. On the other hand, $\pi_X$ is invertible in $\sH(\SmCor)$ (see comment after \cite[Def. 2.25]{mvw}). Hence, if $F$ factors through $\sH(\SmCor)$, it also factors through $S_h^{-1} \SmCor$.
\end{proof}

\begin{thm}\label{p1.1} In the diagram
\begin{equation}\label{eq1.1}
\begin{CD}
\BFC= S_b^{-1} \SmCor@>>> S_b^{-1}\sH(\SmCor)\\
@VVV @VVV\\
S_r^{-1} \SmCor@>>> S_r^{-1}\sH(\SmCor)
\end{CD}
\end{equation}
all functors are isomorphisms of categories. Here $S_r$ denotes the class of [graphs of] stable birational morphisms.\footnote{A morphism between connected smooth varieties is \emph{stably birational} if it is dominant and induces a purely transcendental extension of function fields; this is extended to nonconnected smooth varieties as in footnote \ref{f1}.}
\end{thm}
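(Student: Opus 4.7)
The plan is to show that all four functors in diagram (\ref{eq1.1}) are isomorphisms by reducing them to one key invertibility fact. Since the categories share the same objects, the content is bijectivity on Hom groups, which by the universal property of localization reduces to showing that certain classes of morphisms become invertible in certain intermediate categories. The easiest observation: since $\sH(\SmCor) = S_h^{-1}\SmCor$ by Lemma \ref{l1.1} and $S_h \subset S_r$ (projections are stably birational), one has $S_r^{-1}\sH(\SmCor) = S_r^{-1}S_h^{-1}\SmCor = S_r^{-1}\SmCor$, so the bottom horizontal functor is tautologically an iso.

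Next I would establish that the right vertical functor is an iso, i.e., that every stably birational morphism is already invertible in $S_b^{-1}\sH(\SmCor)$. This follows from a standard generic-triviality argument: if $f: Y \to X$ is stably birational between connected smooth varieties, then $F(Y)/F(X)$ is purely transcendental of some degree $n$, and by spreading out one obtains dense opens $V \subset X$, $U \subset f^{-1}(V) \subset Y$ together with an open immersion $\iota: U \hookrightarrow V \times \A^n$ such that $f|_U = \pi \circ \iota$, where $\pi: V \times \A^n \to V$ is the projection. Each of $\iota$, $U \hookrightarrow Y$, and $V \hookrightarrow X$ lies in $S_b$, and $\pi$ factors as an $n$-fold composite of morphisms in $S_h$. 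In $S_b^{-1}\sH(\SmCor)$ all of these are invertible, so $f$ is invertible too.

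The remaining substantive step is to show the top horizontal functor $S_b^{-1}\SmCor \to S_b^{-1}\sH(\SmCor)$ is an iso, equivalently that $S_h$ is already invertible in $S_b^{-1}\SmCor$. Using the cokernel description \eqref{eq1.3} of $h_0(X,Y)$, this amounts to showing that for every $\gamma \in c(X \times \A^1, Y)$ the difference $i_0(X)^*\gamma - i_1(X)^*\gamma$ vanishes in $S_b^{-1}\SmCor(X, Y)$. This is the technical heart of the proof; the main obstacle is that we must identify correspondences which differ genuinely as cycles on $X \times Y$, even though the class $S_b$ does not \emph{a priori} contain any $\A^1$-homotopies. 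I would attack it via a calculus-of-fractions description of $S_b^{-1}\SmCor(X, Y)$ in terms of roofs $X \leftarrow X' \to Y$ with $X' \to X$ birational (and, after passing to smooth birational models, reducible to dense open immersions), then use the geometry of $X \times \A^1$ to exhibit, on sufficiently small $X'$, a birational modification along which the two specializations of $\gamma$ can be connected. Once this last point is settled, the left vertical functor is an iso by commutativity of the square, completing the proof.
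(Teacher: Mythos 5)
Your first two steps are correct. The identification $S_r^{-1}\sH(\SmCor)=S_r^{-1}S_h^{-1}\SmCor=S_r^{-1}\SmCor$ is the formal content behind the paper's appeal to Lemma \ref{l1.1} and [Birat, Lemma 1.7.1], and your spreading-out factorisation $f|_U=\pi\circ\iota$ of a stably birational morphism through dense open immersions and projections is a clean, self-contained proof that the right vertical functor is an isomorphism. Note, however, that the paper runs the commutativity argument in the opposite direction: it quotes [Birat, Th.\ 1.7.2] for \emph{both} vertical functors --- in particular for the left one, i.e.\ for the invertibility of all of $S_r$ (hence of $S_h$) in $S_b^{-1}\SmCor$ --- and then reads off the top horizontal functor from the square. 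You instead try to prove the top horizontal functor directly and deduce the left vertical one. The two missing arrows are equivalent (your own factorisation shows that inverting $S_h$ in $S_b^{-1}\SmCor$ inverts $S_r$), and both come down to the single claim that the projections $X\times\A^1\to X$ become invertible in $S_b^{-1}\SmCor$; your right-vertical argument cannot be recycled here, because in $\sH(\SmCor)$ that invertibility was given for free, whereas in $S_b^{-1}\SmCor$ it is exactly what is at stake.

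That claim is the genuine gap: your third step is a declaration of intent rather than a proof, and the intended method would not work. You need $i_0(X)^*\gamma=i_1(X)^*\gamma$ in $S_b^{-1}\SmCor(X,Y)$ for every $\gamma\in c(X\times\A^1,Y)$. A roof description in terms of birational $X'\to X$ can only identify correspondences that already agree after pulling back to some such $X'$, and that is too weak: take $X=\Spec F$, $Y=\P^1$ and $\gamma$ the graph of $\A^1\hookrightarrow\P^1$; then $i_0^*\gamma=[0]$ and $i_1^*\gamma=[1]$ are distinct in $c(\Spec F,\P^1)=Z_0(\P^1)$, and $\Spec F$ admits no nontrivial birational modification at all. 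Yet the theorem forces $[0]=[1]$ in $\BFC(\Spec F,\P^1)$, consistently with the later computation $\BFC(\Spec F,\P^1)=CH_0(\P^1)=\Z$ (Theorem \ref{p12.3.5}, Proposition \ref{p2.6}): the identification is rational equivalence, and it is produced by composing with the two-sided inverses of graphs of dense open immersions of \emph{auxiliary} varieties inside $\SmCor$, not by shrinking $X$. In particular $S_b$ cannot admit a naive calculus of fractions in $\SmCor$ computing Hom groups as a colimit over dense opens. The step you defer is precisely the nontrivial content of [Birat, Th.\ 1.7.2]; without proving it, or citing it, the left vertical functor and hence the theorem remain unproven.
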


\begin{proof}   It follows from Lemma \ref{l1.1}  and \cite[Lemma 1.7.1]{Birat} that the bottom horizontal functor is an isomorphism of categories; on the other hand, the vertical functors are isomorphisms of categories by \cite[Th. 1.7.2]{Birat}. 
\end{proof}

We shall get further descriptions of $\BFC$ in Theorem \ref{t2.2} and Remark \ref{r1.2}.

\subsection{Review of modules over additive categories}\label{s.not} 

We refer to \cite[\S 1]{ak} and \cite[App. A]{somekawa}  for this additive version of \cite[I.5.3]{SGA4}. Let $\sA$ be an (essentially small)  additive category: we recall the fully faithful additive Yoneda functor 
\[\sA\by{y}\Mod\sA\]
where $\Mod\sA$ is the category of right $\sA$-modules (= contravariant additive functors from $\sA$ to abelian groups).  Let us call an object of $\Mod\sA$
\emph{representable} if it is in the image of the Yoneda functor
$y:\sA\to \Mod\sA$ and
\emph{free} if it is a direct sum of representable objects. Also recall that an object $X$ of a category $\sC$ is \emph{compact} if $\sC(X,-)$ commutes with arbitrary direct limits (representable in
$\sC$). We then have the following general facts (\cite[Prop. 1.3.6 and A.1.4]{ak}, see also \cite[Exp. 1, p. 97, Ex. 8.7.8]{SGA4} in the nonadditive case):

\begin{prop}\label{p5.3}   
a) The category $\Mod\sA$ is abelian, has enough projectives and enough
injectives and admits arbitrary direct and inverse limits. Filtering
direct limits and products are exact.\\ 
b) Let $\sA^\natural$ be the pseudo-abelian envelope of $\sA$; then $y$ extends to a full embedding $\sA^\natural\inj\Mod\sA$. Its image consists of all compact objects, and these objects are projective. \\
c) Any free object is projective and there are enough free objects.
\end{prop}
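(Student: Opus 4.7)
The strategy is to view $\Mod\sA$ as the category of additive presheaves of abelian groups on $\sA$, in which all (co)limits are computed objectwise in $\Ab$. For \textbf{(a)}, objectwise computation inherits abelianness, arbitrary limits and colimits, and exactness of filtered colimits directly from $\Ab$; exactness of arbitrary products is the specific fact that a product of surjections of abelian groups is surjective. Enough projectives follows from the surjection displayed in (c), whose source is free, hence projective. For enough injectives, $\Mod\sA$ admits the small generating set $\{y(A)\}_{A\in \sA}$ (essentially small because $\sA$ is) and satisfies AB5, so Grothendieck's standard existence theorem for injective envelopes in Grothendieck abelian categories applies.

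For \textbf{(b)}, the additive Yoneda lemma makes $y\colon \sA \to \Mod\sA$ fully faithful; since $\Mod\sA$ is pseudo-abelian (being abelian), $y$ extends uniquely to a fully faithful additive functor $\sA^\natural \hookrightarrow \Mod\sA$. Each $y(A)$ is compact in the paper's sense because $\Hom_{\Mod\sA}(y(A),-)$ coincides with evaluation at $A$, which preserves all small colimits (computed pointwise). Compactness is stable under retracts, so every object of $\sA^\natural$ is compact. Conversely, for any $F \in \Mod\sA$, the Yoneda identity
\[
F = \colim_{(A,\phi)\in (y\downarrow F)} y(A)
\]
combined with compactness of $F$ shows that $\id_F$ is represented by some $s\colon F \to y(A)$ with $\phi \circ s = \id_F$, exhibiting $F$ as a direct summand of $y(A)$ and hence an object of $\sA^\natural$. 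Finally, representables are projective since evaluation at $A$ is exact, and projectivity passes to retracts.

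For \textbf{(c)}, in any abelian category with small coproducts a direct sum of projectives is projective by coordinatewise lifting; combined with (b) this shows that every free object is projective. The canonical surjection
\[
\bigoplus_{A \in \sA,\; s \in F(A)} y(A) \twoheadrightarrow F
\]
then exhibits enough free objects. The main subtlety lies in the converse of (b): it relies crucially on the paper's notion of compactness requiring preservation of \emph{all} small colimits rather than only filtered ones, since otherwise compact objects would merely be finitely presented and not automatically retracts of representables. Beyond that, the argument is essentially routine once $\Mod\sA$ is recognized as a Grothendieck abelian category.
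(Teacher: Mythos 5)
Correct, and essentially the standard argument that the paper simply delegates to \cite[Prop.\ 1.3.6 and A.1.4]{ak} and SGA4: objectwise (co)limits plus the Grothendieck-category criterion for a), the co-Yoneda/tiny-object argument for b), and free covers for c), with your reading of ``compact'' as ``$\Hom(X,-)$ preserves all representable colimits'' being the intended one. One cosmetic point: since the canonical colimit over $(y\downarrow F)$ is not filtered, compactness only yields $1_F=\sum_{k=1}^n\phi_k\circ s_k$ as a finite sum, exhibiting $F$ as a retract of $y(A_1\oplus\dots\oplus A_n)$ rather than of a single term of the diagram --- but this is still representable because $\sA$ is additive, so your conclusion is unaffected.
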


If $f:\sA\to \sB$ is an additive functor, it induces a triple of adjoint functors, with $(f^*\sF)(A)=\sF(f(A))$ for $\sF\in \Mod\sB$): 
\[\Mod\sA\begin{smallmatrix}f_!\\\longrightarrow\\f^*\\\longleftarrow\\f_*\\\longrightarrow \end{smallmatrix} \Mod \sB\]
(each functor is left adjoint to the one below it), and $f_!$ naturally commutes with $f$ relatively to the Yoneda embeddings. If $\sF\in \Mod\sA$, the unit (\resp counit) morphism
\begin{equation}\label{equnit}
\sF\by{\eta} f^*f_!\sF\quad (\text{\resp } f^*f_*\sF\by{\epsilon} \sF)
\end{equation}
is then universal among morphisms from $\sF$ to (\resp to $\sF$ from) presheaves of the form $f^*\sG$.
Note that $f^*$, hence $f_!$, is an equivalence of categories for $f:\sA\to \sA^\natural$ the canonical embedding.

The functor $f^*$ is fully faithful in two cases: when $f$ is a localisation, or when $f$ is full and essentially surjective. These facts are left to the readers as exercises. This persists if one further passes to pseudo-abelian envelopes.  

If $\sA$ is monoidal, its tensor structure extends to $\Mod\sA$ in such a way that the Yoneda embedding is monoidal; if $f:\sA\to \sB$ is a monoidal functor between (additive) monoidal categories, then $f_!$ is monoidal \cite[A.8, A.12]{somekawa}.

\subsection{Elementary properties of birational sheaves}\label{s2.1} 

With the notation of \S \ref{s.not}, the category $\PST$ of presheaves with transfers of \cite[Def. 3.1.1]{voetri} or \cite[Def. 2.1]{mvw} is none other than $\Mod\SmCor$. 

\begin{defn}\label{hi0} We denote by $\HI^\o$ the full subcategory of $\PST$
consisting of those presheaves $\sF$ that are birationally invariant, \ie
such that
$\sF(X)\iso
\sF(U)$ for any dense open immersion $j:U\to X$. We call an object of $\HI^\o$ a birationally invariant homotopy invariant presheaf with transfers, or for short, a \emph{birational sheaf} [with transfers]. 
\end{defn}

By definition, the obvious functor
\[\HI^\o\to
\Mod \BFC=\Mod \BFC^\natural\] 
is an isomorphism of categories; Proposition \ref{p5.3} therefore applies. 

Let $\PHI$ denote the full subcategory of $\PST$ consisting of homotopy invariant presheaves with transfers. We may identify $\PHI$ with $\Mod\sH(\SmCor)$ (see \S \ref{s1.0}).  In view of Proposition \ref{p5.3} a), the string of functors $\SmCor\by{\alpha} \sH(\SmCor)\by{\beta} \BFC$ yields a naturally commutative diagram of categories (ibid.)
\begin{equation}\label{eq1.0}
\begin{CD}
\SmCor @>L>> \PST\\
@V\alpha VV @V\alpha_!VV\\
\sH(\SmCor)@>h_0>> \PHI\\
@V\beta VV @V\beta_!VV\\
\BFC @>h_0^\o>> \HI^\o.
\end{CD}
\end{equation}

Here we follow the notation of \cite[p. 199 and 207]{voetri} for the two top Yoneda functors (in \cite{mvw}, the notation $L$ is replaced by $\Z_{tr}$): for $Y\in \Sm$, $h_0(Y)$ is the presheaf $X\mapsto h_0(X,Y)$ (\cf \eqref{eq1.3}). As in \cite{voetri}, we also write $\alpha_!\sF =h_0(\sF)$ for $\sF\in \PST$. We will sometimes  write $\sF^\o=\beta_!\alpha_!\sF$, so that the canonical map $\sF\to \alpha^*\beta^*\sF^\o$ is universal among morphisms from $\sF$ to birational presheaves. If $\sF\in \PHI$, we have 
\[\sF^\o=\beta_!\alpha_!\alpha^*\sF=\beta_!\sF.\]

We shall also use the category of Nisnevich sheaves with transfers \cite[Def. 3.1.1]{voetri}, \cite[Lect. 13]{mvw}, that we denote here by $\NST$:  by definition, this is the full subcategory of $\PST$ formed by those presheaves with transfers which are sheaves in the Nisnevich topology. We also write $\HI=\NST\cap \PHI$ for the category of homotopy
invariant Nisnevich sheaves with transfers (see \cite[Prop.
3.1.13]{voetri}).  Recall the exact sheafification functor   \cite[Th. 3.1.4]{voetri}
\begin{equation}\label{eq3.2}
a:\PST\to \NST
\end{equation}
which is left adjoint of the inclusion functor $k:\NST\inj \PST$.

The following innocent-looking lemma turns out to be very powerful, and justifies the terminology ``birational sheaf''.

\begin{lemma}\label{l14.0} a) Any presheaf of sets $\sF$ on $\Sm$
  which transforms coproducts into products and is birationally
  invariant in the sense that $\sF(X)\iso \sF(U)$ for any dense open
  immersion $U\inj X$ is a sheaf for the Nisnevich topology.\\ 
b) If $\sF$ is moreover a (pre)sheaf of abelian groups, then
$H^i_\Nis(X,\sF)=0$ for all $X\in \Sm$ and all $i\ne 0$. 
\end{lemma}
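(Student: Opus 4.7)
I would invoke the standard criterion (\cite[Prop.~6.12]{mvw}): a presheaf on $\Sm$ preserving finite coproducts is a Nisnevich sheaf if and only if it sends every elementary distinguished square
\[
\begin{CD}
W @>>> V \\
@VVV @VV p V \\
U @>>j> X
\end{CD}
\]
(with $p$ \'etale, $j$ open, and $p^{-1}(X\setminus U)_\mathrm{red}\iso (X\setminus U)_\mathrm{red}$) to a Cartesian square of sets. The coproduct hypothesis reduces the check to $X$ connected. If $U=\emptyset$, then $p$ is an isomorphism and the square is trivial. Otherwise $j$ is a dense open immersion into the irreducible $X$, and a dimension count --- using that every connected component of $V$ has $\dim V=\dim X$ while $X\setminus U$ has strictly smaller dimension, together with the iso $p^{-1}(X\setminus U)\iso X\setminus U$ --- forces each component of $V$ to meet $W=p^{-1}(U)$, so $W\inj V$ is a dense open immersion on each component. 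Birational invariance of $\sF$ then makes both horizontal maps in the square of $\sF$-values into isomorphisms, so the square is Cartesian.

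\textbf{Plan for (b).} The plan is to show that $\check H^n(\sU, \sF) = 0$ for every Nisnevich cover $\sU$ of $X$ and every $n\ge 1$, and deduce $H^n_\Nis(X,\sF)=0$ from the standard identification $H^n_\Nis = \check H^n_\Nis$ on the Nisnevich site (valid because every Nisnevich stalk sits at a Henselian local scheme, where every Nisnevich cover splits, making every sheaf acyclic). The key geometric input is this: for $X$ connected, any Nisnevich cover $\sU = \{U_\alpha \to X\}$ contains some index $\alpha_0$ with a connected component $V_0 \subset U_{\alpha_0}$ such that $V_0 \to X$ is an open immersion onto a dense open $X_0 \subset X$. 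Indeed, the Nisnevich condition at $\eta_X$ forces a lift $u \in U_{\alpha_0}$ with $F(u) = F(X)$, and an \'etale morphism of generic degree $1$ on its connected component is an open immersion. Moreover, for any \'etale $Y \to X$, the base change $V_0\times_X Y \inj Y$ is a dense open immersion on each connected component of $Y$, so birational invariance yields $\sF(Y) \iso \sF(V_0 \times_X Y)$.

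Using these isomorphisms I would build a contracting homotopy $h : \check C^n(\sU, \sF) \to \check C^{n-1}(\sU, \sF)$ by ``insertion of $V_0$ as a new first coordinate'' followed by inverting the birational-invariance isomorphism; a direct computation verifies $dh+hd = \mathrm{id}$ in positive degrees, yielding $\check H^n(\sU, \sF) = 0$. The main technical difficulty will be tracking connected components of the multi-fold fibre products $U_{\alpha_0}\times_X\cdots\times_X U_{\alpha_n}$ carefully, to ensure the birational-invariance isomorphism applies componentwise at each step. An alternative route, avoiding the \v{C}ech machinery, is induction on $d = \dim X$ via Mayer--Vietoris for the elementary distinguished square built from $V_0$: birational invariance collapses two parallel arrows into isomorphisms, forcing the connecting homomorphisms to vanish and allowing the induction to close. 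Either path rests on the same geometric existence of $V_0$.
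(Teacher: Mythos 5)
For part (a) your argument is essentially the paper's: the proof in the text simply cites the Morel--Voevodsky criterion \cite[p.~96, Prop.~1.4]{mv}, and your verification (the dimension count forcing every component of $V$ to meet $p^{-1}(U)$, so that both horizontal restriction maps become isomorphisms and the square is trivially cartesian) is exactly the intended check. For part (b) the paper again only gives a citation (to \cite[Lemma 1.40]{riou}), whereas you propose a self-contained \v{C}ech argument. Your key geometric input is correct: a Nisnevich cover of an irreducible $X$ contains a component $V_0$ which is a dense open immersion into $X$, and $V_0\times_X Y\inj Y$ is a componentwise dense open immersion for every \'etale $Y\to X$, so $\sF(Y)\iso\sF(V_0\times_X Y)$. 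The resulting contracting homotopy does work; perhaps the cleanest packaging is that these isomorphisms identify $\check C^\bullet(\sU,\sF)$ with $\check C^\bullet(\sU\times_X V_0,\sF)$, and the cover $\sU\times_X V_0$ of $V_0$ is split by the section $V_0\to U_{\alpha_0}\times_X V_0$, whence $\check H^n(\sU,\sF)=0$ for all $n\ge1$ and \emph{every} Nisnevich cover of \emph{every} smooth variety. This is a legitimately more elementary route than the reference.

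The one genuine flaw is your justification for passing from \v{C}ech vanishing to derived-functor vanishing. That the points of the Nisnevich topos are Henselian local schemes on which every cover splits makes the \emph{stalks} acyclic, but acyclicity of points never implies $\check H^n=H^n$ (any topos with enough points has acyclic points in this sense), and the fibre products $U_{\alpha_0}\times_X\cdots\times_X U_{\alpha_n}$ in the \v{C}ech complex are not Henselian, so no formal argument of this shape is available; the general comparison $\check H^n_\Nis=H^n_\Nis$ is itself a nontrivial theorem, not a formality. Fortunately you do not need it: since you prove $\check H^q(\sV,\sF)=0$ for all $q>0$ and all covers $\sV$ of all objects of the site, Cartan's criterion --- the \v{C}ech-to-derived spectral sequence $\check H^p(\sV,\underline{H}^q\sF)\Rightarrow H^{p+q}_\Nis$ together with induction on $q$ and the local triviality of cohomology classes --- yields $H^q_\Nis(X,\sF)=0$ for $q>0$ directly. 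Replace the stalkwise justification by this and part (b) is complete. I would not lean on the alternative Mayer--Vietoris induction you sketch: the dimension does not drop when passing from $X$ to $V_0$ and $V$, so it is unclear how that induction closes.
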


\begin{proof} a) This follows from  \cite[p. 96, Prop. 1.4]{mv}. b) This
follows from \cite[Lemma 1.40]{riou}.\end{proof}

\begin{prop} \label{l14.1}  a) One has $\HI^\o\subset \HI$.\\
b) For any $\sF\in \HI^\o$ and any $X\in\Sm$, one has
$H^i_\Nis(X,\sF)=0$ for all $i\ne 0$.
\end{prop}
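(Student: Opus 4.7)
The proof should be short since the substantial work has already been done in Lemma \ref{l14.0} and Theorem \ref{p1.1}; the proposition is essentially a bookkeeping statement.

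For part (a), the plan is to check the two defining conditions of $\HI$ (Nisnevich sheaf, homotopy invariant) separately. First I would observe that any $\sF\in \HI^\o\subset \PST$ is an additive presheaf, hence transforms finite coproducts into products (the value on a disjoint union $X\sqcup Y$ is $\sF(X)\oplus \sF(Y)=\sF(X)\times\sF(Y)$). Combined with the birational invariance $\sF(X)\iso \sF(U)$ for dense open immersions built into the definition of $\HI^\o$, this puts us exactly in the hypotheses of Lemma \ref{l14.0}(a), so $\sF$ is a Nisnevich sheaf. For homotopy invariance, the point is that $\HI^\o=\Mod\BFC$ and by Theorem \ref{p1.1} combined with Lemma \ref{l1.1}, the functor $\SmCor\to \BFC$ factors through $\sH(\SmCor)=S_h^{-1}\SmCor$, i.e.\ the projections $\pi_X:X\times\A^1\to X$ are inverted in $\BFC$. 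Hence $\Mod\BFC\subset \Mod\sH(\SmCor)=\PHI$, giving homotopy invariance. Putting these together yields $\HI^\o\subset \HI\cap\{\text{birational}\}\subset \HI$.

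For part (b), once (a) is established, $\sF$ is in particular a presheaf of abelian groups which is additive and birationally invariant. Therefore Lemma \ref{l14.0}(b) applies directly and yields $H^i_\Nis(X,\sF)=0$ for all $X\in \Sm$ and all $i\ne 0$.

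There is no real obstacle here: both parts are formal once one notices that Lemma \ref{l14.0} was set up precisely for this purpose, and that the factorization $\SmCor\to\sH(\SmCor)\to\BFC$ from diagram \eqref{eq1.0} forces every module over $\BFC$ to already be homotopy invariant. The only mildly subtle point worth flagging explicitly in the write-up is the additivity step invoked to feed Lemma \ref{l14.0}(a), since the statement of that lemma requires sending coproducts to products rather than to direct sums.
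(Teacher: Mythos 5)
Your proof is correct and follows essentially the same route as the paper: Lemma \ref{l14.0} a) and b) for the sheaf property and the cohomology vanishing, and the factorisation $\SmCor\to\sH(\SmCor)\to\BFC$ (equivalently, the citation of \cite[Th. 1.7.2]{Birat} underlying Theorem \ref{p1.1}) for homotopy invariance. Your explicit remark that additivity of presheaves with transfers supplies the coproducts-to-products hypothesis of Lemma \ref{l14.0} a) is a worthwhile detail that the paper leaves implicit.
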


\begin{proof} a) Let $\sF\in \HI^\o$. By Lemma \ref{l14.0} a), $\sF$ is a sheaf in the Nisnevich
topology. The fact that it is homotopy invariant follows from \cite[Th. 1.7.2]{Birat}. b) merely repeats Lemma \ref{l14.0} b).\footnote{In particular it is not necessary
  to invoke \protect\cite[Th. 3.1.12]{voetri} as we did in \cite{birat}:  we owe this remark to
  Jo\"el Riou.} 
\end{proof}

\subsection{Contractions}\label{s.contr} Recall the following definition from \cite[p. 96]{voepre} or
\cite[Lect. 23]{mvw}:

\begin{defn} Let $\sF\in \PST$. Then $\sF_{-1}\in \PST$ is the presheaf with transfers defined by
\[\sF_{-1}(X)=\Coker\left( \sF(X\times \A^1)\to  \sF(X\times (\A^1-\{0\}))\right).\] 
This is the \emph{contraction} of $\sF$.
\end{defn}

Note that if $\sF\in \PHI$, $\sF_{-1}(X)$ is a functorial direct summand of $\sF(X\times(\A^1-\{0\}))$ because the map $\sF(X)\to \sF(X\times(\A^1-\{0\}))$ has a section given by $1\in \A^1-\{0\}$; in particular, $\sF_{-1}\in \PHI$. If $\sF\in \HI$, this argument shows that $\sF_{-1}\in \HI$,  hence $\sF\mapsto \sF_{-1}$ defines an exact endofunctor of $\HI$.

We may extend $\sF$ to smooth separated 
$F$-schemes essentially of finite type by taking direct limits over open sets, in a standard
way; in particular, we write $\sF(K)$ for $\sF(\Spec K)=\colim \sF(U)$ if $K$ is the function
field of a smooth irreducible variety $X$ and $U$ runs through its open subsets. Recall the following theorem of Voevodsky, a special case of his Gersten resolution \cite[Th. 4.37]{voepre}:

\begin{thm}\label{t2.1} Let $F$ be perfect and suppose $\sF\in \HI$. Then there is an exact sequence for any $X\in \Sm$:
\begin{equation}\label{eq6.1}
0\to \sF(X)\to \sF(F(X))\by{(\partial_x)} \bigoplus_{x\in X^{(1)}} \sF_{-1}(F(x))
\end{equation}
\end{thm}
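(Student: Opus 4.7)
The plan is to establish the sequence as the beginning of the Cousin/Gersten resolution for the homotopy invariant Nisnevich sheaf with transfers $\sF$, after reducing to $X$ irreducible. Three distinct points have to be checked: injectivity of $\sF(X) \to \sF(F(X))$, the construction of residues $\partial_x$ (together with the vanishing of the composite), and exactness in the middle.

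For injectivity, I would use the fact that for a HI Nisnevich sheaf with transfers over a perfect base field, the restriction $\sF(\sO_{X,x}^h) \to \sF(F(X))$ is injective at every $x \in X$ --- a consequence of the local-to-global behaviour of such sheaves on smooth schemes (it is one of the basic outputs of Voevodsky's pretheory machinery). Since an $s \in \sF(X)$ dying generically then dies in every Nisnevich stalk, the sheaf property forces $s = 0$.

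The construction of $\partial_x$ is the step I expect to be the main obstacle. For $x \in X^{(1)}$, perfectness of $F$ ensures that after passing to a suitable Nisnevich neighborhood $U$ of $x$, the reduced closure $D$ of $x$ in $U$ is smooth over $F$ and is cut out by a single regular function. The key ingredient is a Gysin-type identification
\[\sF(U \setminus D)/\sF(U) \;\cong\; \sF_{-1}(D),\]
proved by deformation to the normal bundle: homotopy invariance lets one replace $(U,D)$ Nisnevich-locally by $(D \times \A^1, D \times \{0\})$, and then the quotient $\sF(D \times (\A^1 \setminus \{0\}))/\sF(D \times \A^1)$ is, by the very definition of the contraction, $\sF_{-1}(D)$. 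Passing to the limit over Nisnevich neighborhoods of the generic point of $D$ and composing with $\sF_{-1}(D) \to \sF_{-1}(F(x))$ yields $\partial_x$. This construction makes it clear that $\partial_x$ kills the image of $\sF(X)$, so the composite vanishes.

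For exactness in the middle, suppose $\alpha \in \sF(F(X))$ satisfies $\partial_x \alpha = 0$ for every $x \in X^{(1)}$. The vanishing of $\partial_x \alpha$ means, via the isomorphism above, that $\alpha$ lifts to a section of $\sF$ on some Nisnevich neighborhood of the generic point of the closure of $x$. Using that $\sF$ is a Nisnevich sheaf, I glue these local lifts into a section $\tilde\alpha \in \sF(U)$ for some open $U \subset X$ whose complement has codimension $\geq 2$. The extension of $\tilde\alpha$ across $X \setminus U$ is then supplied by Voevodsky's purity theorem for HI Nisnevich sheaves with transfers, which gives $\sF(X) \iso \sF(U)$ whenever $X$ is smooth and $\codim_X(X \setminus U) \geq 2$, completing the argument.
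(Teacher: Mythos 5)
The first thing to say is that the paper does not prove this statement at all: it is quoted as a theorem of Voevodsky (the degree $\le 1$ part of his Gersten resolution, \cite[Th. 4.37]{voepre}, with $\partial_x$ coming from the purity isomorphism of \cite[Lemma 4.36]{voepre}), so there is no in-paper argument to compare against. Your sketch correctly reconstructs the skeleton of the standard proof: injectivity from Voevodsky's injectivity theorem for (hensel) local rings of smooth schemes together with the sheaf condition; the residues from codimension-one purity, with $\partial_x\alpha=0$ characterising the classes that extend to $\sF(\sO_{X,x}^h)$; and the reduction of middle exactness, by Nisnevich gluing (where you should note that the local extensions agree on overlaps because of the injectivity from step 1), to extending a section across a closed subset of codimension $\ge 2$.

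The gap is in that last step. Once steps 1 and 2 are in place, the statement ``$\sF(X)\iso\sF(U)$ whenever $\codim_X(X\setminus U)\ge 2$'' is essentially \emph{equivalent} to the middle exactness you are proving: $X$ and $U$ have the same function field and the same codimension-one points, so the exact sequence \eqref{eq6.1} for $X$ and for $U$ identifies $\sF(X)$ and $\sF(U)$ with the same subgroup of $\sF(F(X))$ --- and conversely your argument deduces middle exactness from it. So quoting it as an off-the-shelf ``purity theorem'' either makes the argument circular or defers the entire difficulty to an unproved input. In Voevodsky's development the codimension-$\ge 2$ extension is not independent; it is obtained as semipurity ($H^i_Z(X,\sF)=0$ for $i<\codim_X Z$ in Nisnevich cohomology) by a d\'evissage that needs the purity isomorphism $H^i_Z(X,\sF)\simeq H^{i-1}(Z,\sF_{-1})$ in \emph{all} cohomological degrees plus strict homotopy invariance, or equivalently from the effaceability arguments based on Quillen's geometric presentation lemma for standard triples. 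That machinery --- not the degree-zero purity computation you describe --- is the real content of the theorem and is absent from the proposal. (A smaller instance of the same issue occurs in step 2: identifying $\sF(U\setminus D)/\sF(U)$ Nisnevich-locally with $\sF_{-1}(D)$ requires \'etale excision for the presheaf $\sF(-\setminus D)/\sF(-)$, which is again a nontrivial theorem about pretheories, not a formal consequence of homotopy invariance.) To close the gap you would need either to prove the codimension-$\ge 2$ extension by the d\'evissage just described, or to run the effaceability argument directly on $\tilde\alpha\in\sF(U)$.
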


Here $\partial_x$ is defined from the purity isomorphism of ibid., Lemma 4.36.

\subsection{Further characterisations of birational sheaves} The following characterisations are sometimes useful: they assume $F$ to be perfect. The first one is extracted from \cite[Lemma 10.3 b)]{somekawa}:

\begin{prop}\label{p2.2} Assume $F$  perfect, and let $\sF\in \HI$. Then $\sF\in \HI^\o$ if and only if the following holds: 
\begin{quote}
For any function field $K/F$, for any regular curve $C$ over $K$
and any closed point $c \in C$,
the induced map $\sF(\sO_{C, c}) \to \sF(K(C))$ is surjective. \qed
\end{quote}
\end{prop}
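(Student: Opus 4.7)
The forward direction is essentially formal: if $\sF \in \HI^\o$, then since $F$ is perfect and $C$ is regular and locally of finite type over $F$, $C$ is smooth over $F$; birational invariance then gives $\sF(U) \iso \sF(K(C))$ for every dense open $U \subseteq C$, and passing to the colimit over Zariski neighbourhoods of $c$ yields $\sF(\sO_{C,c}) \iso \sF(K(C))$, which is in particular surjective.

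For the converse I would first invoke Theorem \ref{t2.1} to reformulate the conclusion. Applying the Gersten sequence to all sufficiently small Zariski opens of $X$ around a single codimension-one point, one sees that $\sF \in \HI^\o$ is equivalent to the assertion that for every $X \in \Sm$ and every $x \in X^{(1)}$ the canonical map $\sF(\sO_{X,x}) \to \sF(F(X))$ is surjective (equivalently, each boundary $\partial_x$ vanishes on $\sF(F(X))$). It therefore suffices to derive this surjectivity from the hypothesis.

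Fix $(X, x)$, set $K = F(x)$ and $Z = \overline{\{x\}} \subseteq X$. Since $F$ is perfect, $Z$ is generically smooth; after Zariski-shrinking $X$ around a smooth point of $Z$ we may assume that $Z \hookrightarrow X$ is a smooth closed immersion of codimension one. Choosing a local equation for $Z$ near $x$ together with Noether normalisation of $Z$ produces an étale neighbourhood $(X', x') \to (X, x)$ with $F(x') = K$ admitting an étale $F$-morphism $X' \to Z' \times \A^1$ sending $x'$ to $(\eta_{Z'}, 0)$, where $Z'$ is smooth with function field $K$. Since $\sF$ is a Nisnevich sheaf with transfers and the residue $\partial_x$ depends only on the henselisation $\sO_{X,x}^h$, this identification reduces the surjectivity for $(X,x)$ to that of $\sF(\sO_{Z'\times \A^1, (\eta_{Z'},0)}) \to \sF(K(t))$; but $\sO_{Z'\times \A^1, (\eta_{Z'},0)} = K[t]_{(t)}$ is the local ring at the origin of the regular curve $\A^1_K$, so this surjectivity is a direct instance of the hypothesis applied to $(K, \A^1_K, 0)$.

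The main obstacle is making the last step precise, namely showing that $\sO_{X,x}$ is Nisnevich-locally isomorphic to $K[t]_{(t)}$ in a way compatible with both $\sF$ and its residue maps $\partial_x$. This rests on Cohen's structure theorem (applicable because $F$ perfect forces $K/F$ separable) together with the Nisnevich-local nature of the purity isomorphism underlying $\partial_x$; once these compatibilities are in hand, the reduction to $(\A^1_K, 0)$ is essentially automatic.
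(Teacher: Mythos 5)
The paper offers no proof of this proposition: the \verb|\qed| placed after the statement signals that it is quoted verbatim from \cite[Lemma 10.3 b)]{somekawa}. Your proposal therefore has to stand on its own, and while the forward direction and your Gersten reformulation of the converse are correct (by Theorem \ref{t2.1}, $\sF(\sO_{X,x})=\Ker\bigl(\partial_x\colon\sF(F(X))\to\sF_{-1}(F(x))\bigr)$, so $\sF\in\HI^\o$ is indeed equivalent to the surjectivity of $\sF(\sO_{X,x})\to\sF(F(X))$ for all $X\in\Sm$ and $x\in X^{(1)}$), the reduction to $(\A^1_K,0)$ in the last two paragraphs has a genuine gap. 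The \'etale neighbourhood $q\colon X'\to Z'\times\A^1$ with $q(x')=(\eta_{Z'},0)$ gives, via the compatibility of residues with dominant maps (Lemma \ref{l6.6}, $e=1$, trivial residue extension), only the identity $\partial_{x'}\circ q^*=\partial_{(\eta_{Z'},0)}$ on $\sF(K(t))$. Since $F(X')/K(t)$ is a nontrivial finite extension, $q^*\colon\sF(K(t))\to\sF(F(X'))$ is not surjective, so the vanishing of $\partial_{(\eta_{Z'},0)}$ on $\sF(K(t))$ only kills $\partial_{x'}$ on the image of $q^*$, not on all of $\sF(F(X'))$. The slogan ``$\partial_x$ depends only on the henselisation'' does not repair this: the map $\partial_x$ is determined by the valuation, but its \emph{source} $\sF(F(X))$ is not a henselian-local object, and if you try to pass to the colimit over all \'etale neighbourhoods of $(\eta_{Z'},0)$ you need the surjectivity for each of them --- which is essentially the general statement you are trying to prove, so the argument becomes circular.

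The fix is to avoid the reduction to $\A^1_K$ altogether: the hypothesis applies to \emph{every} regular curve over \emph{every} function field, and the pair $\bigl(\Spec\sO_{X,x},\Spec F(X)\bigr)$ is already of that shape. Shrink $X$ to be affine and choose (e.g.\ by a generic linear projection) a dominant morphism $X\to S$ onto a smooth variety of dimension $\dim X-1$ under which $\overline{\{x\}}$ dominates $S$. Then the generic fibre $C=X\times_S\Spec F(S)$ is a regular curve over the function field $K=F(S)$, the point $x$ becomes a closed point $c\in C$, and one has $\sO_{C,c}=\sO_{X,x}$ and $K(C)=F(X)$; the hypothesis applied to $(K,C,c)$ gives exactly the surjectivity of $\sF(\sO_{X,x})\to\sF(F(X))$, and your Gersten reformulation finishes the proof. (In the forward direction, note also that $C$ is not of finite type over $F$; one should interpret $\sF(\sO_{C,c})$ and $\sF(K(C))$ via the colimit over models, after which birational invariance does give the asserted isomorphism.)
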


(These sheaves are called \emph{universally proper} in \cite{somekawa}.)

The second one uses the notion of contraction that we just recalled:

\begin{prop} \label{p2.3} Let $\sF\in \HI$. Consider the following conditions:
\begin{thlist}
\item $\sF\in \HI^\o$.
\item $\sF(X)\iso \sF(X\times (\A^1-\{0\}))$ for any $X\in\Sm$.
\item $\sF_{-1}=0$.
\end{thlist}
Then {\rm (i)} $\Rightarrow$ {\rm (ii)}  $\iff$ {\rm (iii)};  if $F$ is perfect, {\rm (iii)} $\Rightarrow$ {\rm (i)}.
\end{prop}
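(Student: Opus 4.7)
The proof naturally splits into the three implications, of which only the last requires any real input.

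For (i) $\Rightarrow$ (ii), I would simply observe that the open immersion $X\times(\A^1-\{0\})\hookrightarrow X\times \A^1$ is dense, so birational invariance gives $\sF(X\times \A^1)\iso \sF(X\times(\A^1-\{0\}))$, while homotopy invariance gives $\sF(X)\iso \sF(X\times\A^1)$. Composing yields (ii).

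For (ii) $\iff$ (iii), I would use the very definition
\[\sF_{-1}(X)=\Coker\bigl(\sF(X\times\A^1)\to \sF(X\times(\A^1-\{0\}))\bigr).\]
Homotopy invariance identifies the source with $\sF(X)$ and makes the map equal to the pullback along the projection. As noted just after the definition, this map admits a section given by $1\in \A^1-\{0\}$, so it is a split injection. Hence $\sF_{-1}(X)=0$ if and only if it is an isomorphism, which is exactly (ii).

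The substantive step is (iii) $\Rightarrow$ (i) under the assumption that $F$ is perfect, and here I would invoke Voevodsky's Gersten-type sequence (Theorem \ref{t2.1}). Applied to any smooth connected $X$, the hypothesis $\sF_{-1}=0$ kills the right-hand term of \eqref{eq6.1}, giving $\sF(X)\iso \sF(F(X))$. For a dense open immersion $j:U\hookrightarrow X$ between smooth connected varieties one has $F(U)=F(X)$, so both $\sF(X)$ and $\sF(U)$ agree with $\sF(F(X))$ and hence with each other. The nonconnected case follows immediately because $\sF\in \HI\subset \NST$, so $\sF$ converts coproducts into products and we may argue componentwise. Thus $\sF\in \HI^\o$.

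The only real obstacle is the last implication, which genuinely requires Voevodsky's theorem on homotopy invariant sheaves with transfers over perfect fields; everything else is formal manipulation of the definitions.
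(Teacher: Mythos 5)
Your proof is correct and follows exactly the route the paper takes (the paper merely says the first implications are "obvious" and that the last follows from Theorem \ref{t2.1}, i.e.\ the Gersten sequence \eqref{eq6.1}). Your write-up supplies precisely the details the authors omit, including the splitting by the section at $1\in\A^1-\{0\}$ and the reduction of the nonconnected case to components.
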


\begin{proof} The first implications are obvious; the last one follows from Theorem \ref{t2.1}.
\end{proof}

\subsection{Serre embeddings} \label{s.serre} Let $\sA$ be an abelian category. Recall that a full subcategory $\sB\subseteq \sA$ is a \emph{Serre subcategory} if it is additive and if, given an exact sequence $0\to A'\to A\to A''\to 0$ in $\sA$, we have $A\in \sB$ $\iff$ $A',A''\in \sB$. We say that $\sB\to \sA$ is a \emph{Serre embedding}. We have:

\begin{prop}\label{p1.3} let $f^*:\sB\to \sA$ be a Serre embedding. Suppose that $f^*$ has a left adjoint $f_!$ (\resp a right adjoint $f_*$). Then, for any $\sF\in \sA$ the unit (\resp counit) morphism of \eqref{equnit}
is an epimorphism (\resp a monomorphism).
\end{prop}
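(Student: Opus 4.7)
The plan is to treat the left-adjoint case and then obtain the right-adjoint case by duality. I fix $\sF\in \sA$, set $C=\Coker(\eta)$ for $\eta:\sF\to f^*f_!\sF$ the unit, and aim to show $C=0$. The first step is to observe that $C$ is a quotient of $f^*f_!\sF$, which lies in $\sB$ (identifying $\sB$ with its essential image under $f^*$); the Serre hypothesis applied to the short exact sequence $0\to \IM(\eta)\to f^*f_!\sF\to C\to 0$ forces $C\in \sB$, so there is a unique $C'\in \sB$ with $f^*C'=C$, and full faithfulness of $f^*$ shows that the canonical projection $\pi:f^*f_!\sF\to C$ has the form $\pi=f^*(\tilde\pi)$ for a unique $\tilde\pi:f_!\sF\to C'$.

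The key step is then to translate the identity $\pi\circ \eta=0$ (which holds by definition of $\pi$ as the cokernel projection) across the adjunction. The bijection
\[\Hom_\sB(f_!\sF,C')\iso \Hom_\sA(\sF,f^*C'),\qquad \tilde g\longmapsto f^*(\tilde g)\circ \eta\]
sends $\tilde\pi$ to $\pi\circ \eta$ by the very definition of $\tilde\pi$, and sends $0$ to $0$; since $\pi\circ \eta=0$, I conclude $\tilde\pi=0$, hence $\pi=0$ and $C=0$, so that $\eta$ is an epimorphism.

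The right-adjoint statement is formally dual: applying the analogous argument to $K=\Ker(\epsilon)$ for $\epsilon:f^*f_*\sF\to \sF$ the counit, one gets $K\in \sB$ as a subobject of $f^*f_*\sF$, so $K=f^*K'$ for some $K'\in \sB$, and the inclusion $\iota:K\inj f^*f_*\sF$ equals $f^*(\tilde\iota)$; since $\epsilon\circ \iota=0$ is the image of $\tilde\iota$ under the $(f^*,f_*)$-adjunction bijection, one forces $\tilde\iota=0$, hence $\iota=0$ and $K=0$. There is no serious obstacle here: the whole argument is a direct assembly of the Serre closure property (to put $C$ or $K$ inside $\sB$), full faithfulness of $f^*$ (to lift $\pi$ or $\iota$ along $f^*$), and the fact that any adjunction bijection carries $0$ to $0$.
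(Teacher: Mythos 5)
Your proof is correct. Both you and the paper begin the same way: the Serre closure property places the cokernel $C=\Coker(\eta)$ (\resp the kernel $K=\Ker(\epsilon)$) inside the essential image of $f^*$, so that $C\simeq f^*C'$ for some $C'\in\sB$. Where you diverge is in how you kill $C'$. The paper (treating the $f_*$ case and dualising) applies the left exact functor $f_*$ to the exact sequence $0\to C\to f^*f_*\sF\to\sF$ and uses the triangle identity $ab=1_{f_*\sF}$ together with the isomorphism $C'\iso f_*f^*C'$ to conclude $f_*f^*C'=0$, hence $C'=0$. You instead never apply the adjoint functor to anything: you lift the cokernel projection $\pi:f^*f_!\sF\to C$ along the fully faithful $f^*$ to $\tilde\pi:f_!\sF\to C'$, and observe that the adjunction bijection $\Hom_\sB(f_!\sF,C')\iso\Hom_\sA(\sF,f^*C')$, $\tilde g\mapsto f^*(\tilde g)\circ\eta$, sends $\tilde\pi$ to $\pi\circ\eta=0$; injectivity of this bijection forces $\tilde\pi=0$, hence $\pi=0$ and $C=0$. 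Your version is marginally more economical in that it uses only the universal property of the unit (\resp counit) and avoids invoking left exactness of $f_*$ and the triangle identities; the paper's version has the mild advantage of producing the explicit vanishing $f_*f^*C'=0$ as an intermediate output. Both arguments are complete and correct.
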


\begin{proof} By duality, it suffices to prove this for $f_*$. This is \cite[Prop. E.4.1 (1)]{bar-kahn}, whose proof we reproduce here for completeness. Since in particular $f^*$ is fully faithful, the unit morphism $\sG\to f_*f^*\sG$ is an isomorphism for any $\sG\in \sB$. Let $\sF\in \sA$ and let $C=\Ker(f^*f_*\sF\to \sF)$: by hypothesis, $C\simeq f^*C'$ for some $C'\in \sB$. Applying the left exact functor $f_*$ to the exact sequence $0\to C\to f^*f_*\sF\to \sF$ yields a diagram
\[\begin{CD}
0@>>> f_*f^*C'@>>> f_*f^*f_*\sF@>a>> f_*\sF \\
&&@A\wr A A @A\wr A b A\\
&& C' && f_*\sF
\end{CD}\]
in which $ab=1_{f_*\sF}$ by the adjunction identities. Hence in the top exact row, $a$ is an isomorphism and $f_*f^*C'=0$.
\end{proof}

For the sequel it is important to know that some inclusions of abelian subcategories of $\PST$ are Serre embeddings. We treat all of them in a unified way.

\begin{prop}\label{p2.4}  Suppose $F$ perfect. Then the inclusions $\iota:\PHI\allowbreak\subset \PST$, $i:\HI\subset \NST$ and $i^\o:\HI^\o\subset \HI$ are Serre embeddings.
\end{prop}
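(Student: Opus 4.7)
The plan is to treat the three Serre-embedding claims separately, invoking perfectness of $F$ only where it is truly needed.

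For $\iota \colon \PHI \subset \PST$, no assumption on $F$ is needed. I would note that $\PST = \Mod\SmCor$ is a functor category to $\Ab$, so exact sequences are pointwise; for each $X \in \Sm$, the evaluations $\sF \mapsto \sF(X)$ and $\sF \mapsto \sF(X \times \A^1)$ are exact endofunctors connected by a natural transformation $\pi_X^*$. Since $\sF \in \PHI$ iff $\pi_X^*$ is an iso on $\sF$ for every $X$, a pointwise $5$-lemma shows this property satisfies $2$-out-of-$3$ in short exact sequences, which is exactly the Serre condition.

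For $i^\o \colon \HI^\o \subset \HI$, I would appeal to Proposition \ref{p2.3}(iii): for $F$ perfect, $\HI^\o$ is precisely the kernel of the contraction functor $(-)_{-1} \colon \HI \to \HI$. Since this functor is exact (as noted in \S \ref{s.contr}), its kernel is automatically a Serre subcategory.

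The main case is $i \colon \HI \subset \NST$. My plan is to reduce it to two theorems of Voevodsky valid for perfect $F$: (a) the Nisnevich sheafification of a HI presheaf with transfers lies in $\HI$; (b) for $\sF \in \HI$, each cohomology presheaf $X \mapsto H^i_\Nis(X, \sF)$ lies itself in $\HI$; see \cite{voepre} or \cite[Lect.~22, 24]{mvw}. Given $0 \to \sF' \to \sF \to \sF'' \to 0$ exact in $\NST$, I would set $\sC = \sF/\sF'$ (the $\PST$-cokernel of $\sF' \hookrightarrow \sF$), so that $\sF'' = a\sC$ and $0 \to \sF' \to \sF \to \sC \to 0$ is exact in $\PST$. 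If $\sF \in \HI$, the Serre property already proved for $\iota$ gives $\sF', \sC \in \PHI$; since $\sF' \in \NST$ we obtain $\sF' \in \HI$, and (a) gives $\sF'' = a\sC \in \HI$. Conversely, if $\sF', \sF'' \in \HI$, I would use the long exact sequence of Nisnevich cohomology to form the four-term exact sequence
\begin{equation*}
0 \to \sF'(X) \to \sF(X) \to \sF''(X) \to H^1_\Nis(X, \sF'),
\end{equation*}
together with its counterpart over $X \times \A^1$. The first and third vertical comparison maps are iso by HI of $\sF', \sF''$, and the fourth is iso by (b) applied to $\sF'$; a diagram chase (the four-lemma) then forces the middle map $\sF(X) \to \sF(X \times \A^1)$ to be an iso, giving $\sF \in \HI$. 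The real obstacle is the non-formal input (b); once that is granted, everything else reduces to viewing $\PST$ as a functor category and $(-)_{-1}$ as an exact endofunctor.
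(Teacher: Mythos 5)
Your proposal is correct in substance and, for two of the three inclusions, takes a genuinely different (and in one case cleaner) route than the paper. The paper handles all three cases uniformly: it encodes homotopy invariance (\resp birationality) as the vanishing of the exact, functorial direct summand $\tilde\sF(X)\subset\sF(X\times\A^1)$ (\resp $\check\sF(X)\subset\sF(X\times(\A^1-\{0\}))$) cut out by the point $0$ (\resp $1$), and then reads off the Serre property from the four-term exact sequence $0\to\tilde\sF'\to\tilde\sF\to\tilde\sF''\to\tilde H^1_\Nis(-,\sF')$, using \cite[Th.~3.1.12]{voetri} for $i$ and Lemma \ref{l14.0}~b) for $i^\o$ to kill the $H^1$ term. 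Your treatment of $i^\o$ as the kernel of the exact endofunctor $(-)_{-1}$ (via Proposition \ref{p2.3}, which is where perfectness enters) is a slicker packaging of the same content: note that $\check\sF=\sF_{-1}$ for $\sF\in\HI$, so the paper's four-term sequence is essentially the verification of that exactness; you are leaning on the assertion of \S\ref{s.contr} that $(-)_{-1}$ is exact on $\HI$, which is legitimate but is itself a nontrivial consequence of Voevodsky's theory. For $i$, your forward direction via the presheaf cokernel $\sC$ and the inclusion $a(\PHI)\subseteq\HI$ is a genuine variant of the paper's argument, though it rests on the same theorem of Voevodsky; your converse direction via the four lemma is the paper's argument in different notation.

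The one step you should tighten is the claim that ``a pointwise $5$-lemma shows this property satisfies $2$-out-of-$3$'' for $\iota$. The five lemma gives only the implication (ends homotopy invariant) $\Rightarrow$ (middle homotopy invariant); for a map of short exact sequences with invertible middle vertical, the snake lemma alone yields $\Ker$ of the left map $=0$, $\Coker$ of the right map $=0$ and an isomorphism between the remaining kernel and cokernel, which does not force the outer maps to be isomorphisms in general. What rescues the argument is that $\pi_X^*:\sF(X)\to\sF(X\times\A^1)$ is split injective (retracted by $i_0(X)^*$), so that being an isomorphism is equivalent to the vanishing of the complementary summand $\tilde\sF(X)$, and $\sF\mapsto\tilde\sF$ is an exact endofunctor of $\PST$; this is precisely the device the paper uses, and with it the $2$-out-of-$3$ property is immediate. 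The same remark applies, harmlessly, to your four-lemma step for $i$, where you need the fourth vertical map $H^1_\Nis(X,\sF')\to H^1_\Nis(X\times\A^1,\sF')$ only to be injective (which is again automatic from the splitting), though invoking its bijectivity via \cite[Th.~3.1.12]{voetri} is of course also fine.
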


\begin{proof} Let $\sF\in \PST$. For any smooth variety $X$, the map
\[\sF(X)\to \sF(X\times \A^1)\]
is split by using the rational point $0\in \A^1$. This defines an idempotent endomorphism of $\sF(X\times \A^1)$, whose kernel we denote by $\tilde \sF(X)$: this is a presheaf in $X$. So $\sF\in \PHI$ $\iff$ $\tilde \sF=0$.

The construction $\sF\mapsto \tilde \sF$ is clearly functorial in $\sF$, and exact. In particular, if $0\to \sF'\to \sF\to \sF''\to 0$ is a short exact sequence in $\PST$ and $X\in\SmCor$, we get a short exact sequence 
\[0\to \tilde\sF'(X)\to \tilde \sF(X)\to \tilde \sF''(X)\to 0.\]

That $\iota$ is a Serre embedding follows immediately. (In this case, the perfectness of $F$ is not used.)

Suppose now that $0\to \sF'\to \sF\to \sF''\to 0$ is an exact sequence in $\NST$. Given $X\in\SmCor$, the cohomology exact sequence induces an exact sequence
\[0\to \tilde\sF'(X)\to \tilde \sF(X)\to \tilde \sF''(X)\to \tilde H^1_\Nis(X,\sF').\]

(Here we viewed $\sG=H^1_\Nis(-,\sF')$ as a presheaf on $\Sm$, and wrote $\tilde H^1_\Nis(X,\sF')$ for $\tilde \sG(X)$.) This shows that $\HI$ is closed under extensions in $\NST$. Conversely, suppose that $\sF\in \HI$. Then $\sF'\in \HI$. But a theorem of Voevodsky \cite[Th. 3.1.12]{voetri} implies that $X\mapsto H^1_\Nis(X,\sF')$ is homotopy invariant, hence $\sF''\in \HI$ as well.

To deal with the last case, we argue as above using this time $\A^1-\{0\}$. Thus, for $\sF\in \PST$, write $\check\sF(X)$ for the kernel of the idempotent on $\sF(X\times (\A^1-\{0\}))$ defined by the rational point $1\in \A^1-\{0\}$. Suppose $\sF\in \HI$: by Proposition \ref{p2.3}, $\sF\in \HI^\o$ $\iff$ $\check \sF=0$.

Suppose that $0\to \sF'\to \sF\to \sF''\to 0$ is an exact sequence in $\HI$.  For any $X\in \Sm$, we have an exact sequence
\[0\to \check\sF'(X)\to \check \sF(X)\to \check \sF''(X)\to \check H^1_\Nis(X,\sF').\]

This exact sequence shows that $\HI^\o$ is closed under extensions in $\HI$. Now suppose that $\sF\in \HI^\o$. Then $\sF'\in\HI^\o$. But lemma \ref{l14.0} b) implies that $H^1_\Nis(X,\sF')=0$, and a fortiori $\check H^1_\Nis(X,\sF')=0$. So $\sF''\in \HI^\o$ as well.
\end{proof}

Recall that in Proposition \ref{p2.4}, $i$ has a left adjoint: this nontrivial fact amounts to say that $a(\PHI)\subseteq \HI$ \cite[Prop. 3.1.12]{voetri}. More easily:

\begin{prop}\label{p5.5} In Proposition \ref{p2.4}, $i^\o$ has a left adnoint $\nu_0$ and a right adjoint $R^0_\nr$. If $F$ is perfect, the canonical morphism $\sF\to \nu_0\sF$ (\resp $R^0_\nr\sF\to \sF$) is an epimorphism (\resp a monomorphism) for any $\sF\in\HI$. 
\end{prop}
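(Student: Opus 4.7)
The plan is to use the general adjoint formalism of \S\ref{s.not} applied to the functor $\beta:\sH(\SmCor)\to \BFC$ of \eqref{eq1.0}, and then apply the Serre embedding result of Proposition \ref{p1.3} to obtain the epimorphism/monomorphism statements.

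First, $\beta$ induces on module categories a triple of adjoint functors $\beta_!\dashv \beta^*\dashv \beta_*$, where $\beta^*$ is tautologically the inclusion $\HI^\o=\Mod\BFC\hookrightarrow \Mod\sH(\SmCor)=\PHI$. The key observation is that since $\HI$ is a full subcategory of $\PHI$ containing $\HI^\o$, for $\sF\in\HI$ and $\sG\in\HI^\o$ we have
\[\Hom_{\HI^\o}(\beta_!\sF,\sG)=\Hom_{\PHI}(\sF,\beta^*\sG)=\Hom_\HI(\sF,i^\o\sG),\]
and dually for $\beta_*$. Hence $\nu_0:=\beta_!|_\HI$ (\resp $R^0_\nr:=\beta_*|_\HI$) defines a left (\resp right) adjoint to $i^\o$, both landing in $\HI^\o$ by construction; note that no perfectness hypothesis is needed at this stage.

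For the final assertion, I would invoke Proposition \ref{p2.4}: when $F$ is perfect, $i^\o:\HI^\o\hookrightarrow\HI$ is a Serre embedding. Proposition \ref{p1.3} then applies verbatim, yielding that the unit $\sF\to i^\o\nu_0\sF$ (\resp the counit $i^\o R^0_\nr\sF\to\sF$) is an epimorphism (\resp a monomorphism) for any $\sF\in\HI$. I do not anticipate a real obstacle: the proof is a direct concatenation of machinery already set up in the paper. The one subtlety worth checking is that restricting $\beta_!$ and $\beta_*$ to $\HI\subset\PHI$ yields the universal property with respect to $i^\o$ rather than the larger inclusion $\HI^\o\hookrightarrow\PHI$, which is immediate from $\HI\hookrightarrow \PHI$ being a full embedding.
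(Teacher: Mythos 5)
Your argument is correct and coincides with the paper's own proof: the paper likewise defines $\nu_0$ and $R^0_\nr$ as the restrictions of $\beta_!$ and $\beta_*$ to $\HI\subset\PHI$ (the full-embedding point you flag is exactly the "Since $\HI^\o\subset\HI$..." remark there), and then deduces the epimorphism/monomorphism claims from Propositions \ref{p1.3} and \ref{p2.4}. Nothing further is needed.
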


\begin{proof} Since $\HI^\o\subset \HI$, it is clear with the notation of \eqref{eq1.0} that the composition $\HI\inj \PHI\by{\beta_!}\HI^\o$ (\resp $\HI\inj \PHI\by{\beta_*}\HI^\o$) yields the desired left (\resp right) adjoint (see also Proposition \ref{padj}). The last two claims now follow from Propositions \ref{p1.3} and \ref{p2.4}.
\end{proof}

\section{Birational sheaves and pure birational motives}\label{s2}

We construct  in Proposition \ref{p2.6} a full embedding of the category $\Cor^\o_\rat$ of birational Chow correspondences, introduced in \cite{birat-pure}, into the category $\BFC$ studied in the previous section. This relies on an explicit formula for the Nisnevich sheaf with transfers $h_0^\Nis(Y)$ attached to a smooth proper variety $Y$, which imples that it is birational  (Theorem \ref{p12.3.5}); when $F$ is perfect, this was proven differently in \cite[Proof of Th. 2.2]{motiftate}. The full embedding $\Cor_\rat^\o\inj\BFC$ becomes an equivalence of categories after inverting the exponential characteristic and passing to idempotent completions (Corollary \ref{c1.1}). As a first byproduct, we get in Corollary \ref{c2.1} a functor
\[S_b^{-1}\Sm \to \Chow^\o[1/p]\]
where $S_b^{-1}\Sm$ is the category which was studied in \cite{Birat} and  $\Chow^\o$ is the category of birational Chow motives which was studied in \cite{birat-pure}; this functor could not be construced by the methods of  \cite{birat-pure}.

\subsection{Birational sheaves and smooth projective varieties} 

\begin{lemma}\label{presh} Let $X,Y\in \Sm$. Then we have a natural homomorphism (\cf \eqref{eq1.3})
\begin{equation}
\label{eq5.2}
h_0(X,Y)\to CH^{\dim
Y}(X\times Y)
\end{equation} 
which is bijective if $X=\Spec F$ and $Y$ is proper.
\end{lemma}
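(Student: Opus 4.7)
The plan has three steps.

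First, I would define \eqref{eq5.2} on a generator $W \in c(X,Y)$ — an integral closed subscheme of $X \times Y$, finite and surjective over some connected component $X_0$ of $X$ — by $W \mapsto [W] \in CH^{\dim Y}(X_0 \times Y) \subset CH^{\dim Y}(X \times Y)$; the codimension is correct since $\dim W = \dim X_0$. To see this factors through $h_0(X,Y)$, it is enough to show $[i_0^* Z] = [i_1^* Z]$ in $CH^{\dim Y}(X \times Y)$ for every $Z \in c(X \times \A^1, Y)$. The finite-surjective hypothesis on $Z$ ensures the Voevodsky-style pullbacks $i_t^* Z$ agree with the ordinary (flat) Chow pullbacks $i_t^*[Z]$ along $i_t : X \times Y \hookrightarrow X \times \A^1 \times Y$. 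Since both $i_0$ and $i_1$ are sections of the projection $\pi : X \times \A^1 \times Y \to X \times Y$, and since $\pi^*$ is an isomorphism on Chow groups (homotopy invariance), we get $i_0^*[Z] = i_1^*[Z]$.

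For the second assertion, take $X = \Spec F$ and $Y$ smooth proper. One has $c(\Spec F, Y) = Z_0(Y)$ and $CH^{\dim Y}(Y) = CH_0(Y)$, so surjectivity is immediate. For injectivity it suffices to show that each generator of the kernel of $Z_0(Y) \to CH_0(Y)$ — a cycle of the form $p_* \div_{\tilde C}(g)$, where $C \subset Y$ is an integral curve, $\tilde C \to C$ is its (finite) normalization, $p : \tilde C \to Y$ the composition, and $g$ a nonconstant rational function on $\tilde C$ — lies in the image of $i_0^* - i_1^* : c(\A^1, Y) \to Z_0(Y)$.

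The witness is produced by the Möbius trick. Set $f := 1/(1-g)$, a nonconstant rational function on $\tilde C$ hence a morphism $f : \tilde C \to \P^1$. Form the $1$-cycle $(f, p)_* [\tilde C]$ in $\P^1 \times Y$ (well-defined because $(f, p)$ is proper, using that $Y$ is proper) and let $Z$ be its restriction to $\A^1 \times Y$. Then $Z \in c(\A^1, Y)$: finiteness over $\A^1$ comes from the proper map to $\P^1$, surjectivity from $f$ being nonconstant. Using $f^{-1}(0) = g^{-1}(\infty)$ and $f^{-1}(1) = g^{-1}(0)$ on $\tilde C$ together with the projection formula gives
\[
i_0^* Z - i_1^* Z \;=\; p_*\!\left(g^{-1}(\infty) - g^{-1}(0)\right) \;=\; -\, p_* \div(g),
\]
exhibiting $p_* \div(g)$ as an element of the image of $i_1^* - i_0^*$.

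The main obstacle is the compatibility invoked in Step 1: that the Voevodsky pullback $i_t^* Z$ (defined via the finite-and-surjective condition) coincides with the flat Chow pullback of $[Z]$. Once this is settled, homotopy invariance does all the work. A secondary bookkeeping issue is tracking multiplicities when $(f, p) : \tilde C \to \P^1 \times Y$ fails to be birational onto its image; but the degree factor is absorbed into the pushforward $(f, p)_*[\tilde C]$ and cancels on both sides via the projection formula, so the identification $i_0^* Z - i_1^* Z = -p_*\div(g)$ goes through unchanged.
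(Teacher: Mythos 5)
Your proposal is correct and follows essentially the same route as the paper: the first claim via the compatibility of the correspondence pullbacks $i_0^*,i_1^*$ with the cycle-theoretic ones plus homotopy invariance of Chow groups, and the second via the observation that properness of $Y$ makes every curve in $\A^1\times Y$ dominant over $\A^1$ into a finite correspondence, so that the relations defining $h_0(\Spec F,Y)$ match rational equivalence on $Z_0(Y)$. You merely spell out (correctly, via the M\"obius substitution $f=1/(1-g)$) the witness construction that the paper leaves implicit.
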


\begin{proof} The first claim  is obvious from the commutative diagram
\[\begin{CD}
c(X\times \mathbf{A}^1,Y)@>i_0^*-i_1^*>> c(X,Y)\\
@VVV @VVV \\
CH^{\dim Y}(X\times \mathbf{A}^1\times Y)@>i_0^*-i_1^*>> CH^{\dim Y}(X\times Y)
\end{CD}\]
and the homotopy invariance of Chow groups. Suppose that $X=\Spec F$. Then $c(X,Y)=Z_0(Y)$; if $Y$ is proper, any irreducible $1$-cycle on $\A^1\times Y$ which is not constant over $\A^1$ defines an element of $c(\A^1,Y)$, hence the second claim.
\end{proof}

Let $Y$ be a smooth variety. As in \cite[p. 207]{voetri} we write $h_0^\Nis(Y)$ for  the Nisnevich sheaf with transfers associated to the presheaf $h_0(Y)\in \PHI$ (see \eqref{eq1.0}). Note that $h_0^\Nis(Y)\in\HI$ by \cite[Th. 3.1.12]{voetri}.

\begin{thm}\label{p12.3.5} Let $Y$ be a smooth proper variety.
 Then $h_0^\Nis(Y)$ is given by the formula
\[h_0^\Nis(Y)(X)=CH_0(Y_{F(X)})\]
for any connected $X\in \Sm$. In particular, $h_0^\Nis(Y)\in \HI^\o$.
\end{thm}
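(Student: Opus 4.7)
My plan is to exhibit an isomorphism between $h_0^\Nis(Y)$ and the presheaf
\[
\Phi_Y(X) := \bigoplus_i CH_0(Y_{F(X_i)})
\]
(sum over connected components $X_i$ of $X$), by a generic-stalk argument. For each smooth connected $X$, I define $\phi_X : h_0(X,Y) \to CH_0(Y_{F(X)})$ as the composition of the map of Lemma \ref{presh}, $h_0(X,Y) \to CH^{\dim Y}(X \times Y)$, with the localization-sequence surjection $CH^{\dim Y}(X \times Y) \twoheadrightarrow CH_0(Y_{F(X)})$ (restriction to the generic fiber). This is natural along flat---in particular étale---morphisms of smooth connected varieties, which suffices for comparison on the small Nisnevich site.

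The technical core is a generic-stalk isomorphism: for each smooth connected $X$,
\[
\colim_{U \subseteq X \text{ dense open}} h_0(U,Y) \iso CH_0(Y_{F(X)})
\]
via $\phi$. For surjectivity, given $\sum n_i[y_i] \in Z_0(Y_{F(X)})$, take Zariski closures $\bar y_i \subseteq Y \times X$; each is projective over $X$ (as $Y$ is proper), dominant, and has finite fiber over $\eta_X$, hence by upper semi-continuity is finite over some dense open $U \subseteq X$, and the resulting sum gives a preimage in $c(U,Y)$. For injectivity, suppose $\alpha \in c(U,Y)$ has $\phi_U(\alpha)=0$; using properness of $Y$ to pass from the $\P^1$- to the $\A^1$-presentation of rational equivalence, write this vanishing as $\sum_j(i_0^*W_j - i_1^*W_j)$ for $W_j \subseteq \A^1_{F(U)} \times Y_{F(U)}$ finite over $\A^1_{F(U)}$, lift each $W_j$ to $\tilde W_j \in c(\A^1 \times U', Y)$ on some smaller dense $U' \subseteq U$ by the same projectivity/semi-continuity argument applied over the base $\A^1_U$, and kill the residual cycle $\alpha|_{U'} - \sum_j(i_0^*\tilde W_j - i_1^*\tilde W_j) \in c(U',Y)$---which has vanishing generic fiber---using the injection $c(U',Y) \hookrightarrow Z_0(Y_{F(U')})$ valid for $U'$ connected.

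To conclude, I would regard $\Phi_Y$ as a presheaf on the small Nisnevich site of each $X$, with pullbacks defined along étale morphisms via flat pullback of Chow groups. It is birational by construction, and homotopy invariant because $CH_0(Y_K) \to CH_0(Y_{K(t)})$ is an isomorphism for $Y$ smooth proper (a consequence of homotopy invariance of $CH^{\dim Y}(Y \times -)$ combined with the localization sequence). Hence by Lemma \ref{l14.0} a) it is a Nisnevich sheaf. Applying the generic-stalk calculation also to Henselian local rings $\sO_{X,\xi}^h$ in place of $X$, the map $h_0(Y) \to \Phi_Y$ is an isomorphism on every Nisnevich stalk, giving a sheaf isomorphism $h_0^\Nis(Y) \simeq \Phi_Y$ on each small Nisnevich site $X_\Nis$. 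Evaluation at $X$ yields the formula $h_0^\Nis(Y)(X) = CH_0(Y_{F(X)})$, and the birationality of the right-hand side then gives $h_0^\Nis(Y) \in \HI^\o$.

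\textbf{Main obstacle.} The hardest step will be the injectivity in the generic-stalk calculation: the closure $\bar W_j \subseteq \A^1_U \times Y$ of an $\A^1$-boundary may have a positive-dimensional-fiber locus whose image in $U$ is all of $U$, so one cannot simply shrink $U$ to make $\tilde W_j$ finite over $\A^1 \times U'$. Overcoming this will require Stein factorization of $\bar W_j \to \A^1_U$ to separate horizontal and vertical contributions, or a more judicious choice of $\A^1$-boundary representative for the given rational equivalence on $Y_{F(U)}$.
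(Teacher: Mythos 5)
Your route is genuinely different from the paper's: you try to avoid putting transfers on $X\mapsto CH_0(Y_{F(X)})$ by comparing $h_0(Y)$ with $\Phi_Y$ stalkwise on each small Nisnevich site, whereas the paper constructs the presheaf \emph{with transfers} $\bar h_0(Y):X\mapsto CH_0(Y_{F(X)})$ directly --- the key point being that a correspondence supported over a proper closed subset of the source acts by zero, proved by moving a $0$-cycle off that subset via Roberts' moving lemma --- and then checks that the induced map $h_0^\Nis(Y)\to\bar h_0(Y)$ is an isomorphism on fields. Your proposal has a genuine gap exactly where the paper's moving-lemma input sits. Before explaining it, note that the step you flag as the ``main obstacle'' is not one: since $Y$ is proper, $\bar W_j\to \A^1\times U$ is proper, so the locus $T$ where its fibres are positive-dimensional is closed; $T$ misses $\A^1_{\eta_U}$ because $W_j$ is finite over $\A^1_{F(U)}$, hence no irreducible component of $T$ dominates $U$ (its generic point would lie over $\eta_U$), and $T$ maps into a proper closed subset of $U$. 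Shrinking $U$ therefore works; no Stein factorization is needed, and your generic-stalk computation is fine.

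The real problem is at the non-generic Nisnevich points, which your argument needs (the global sections of the sheafification over $X$ are not controlled by the generic stalk alone). The stalk of $h_0(Y)$ at $(X,\xi)$ is $\colim_{(V,v)}h_0(V,Y)$ over \'etale neighbourhoods of $\xi$, while the stalk of $\Phi_Y$ is $CH_0(Y_K)$ with $K=\operatorname{Frac}\,\sO^h_{X,\xi}$. Your spreading-out argument produces, for a closed point $y$ of $Y_{F(V)}$, a dense open $V_0\subseteq V$ over which $\bar y$ is finite, but nothing forces $v\in V_0$, and one cannot shrink an \'etale neighbourhood of $\xi$ into $V_0$ when $v\notin V_0$. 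This failure is real: take $X=\A^2$ with coordinates $x_1,x_2$, $\xi$ the origin, $Y=\P^1$ and $y=[x_1/x_2]\in \P^1(F(X))$; the closure of $y$ in $X\times\P^1$ is the blow-up at the origin, with a $\P^1$-fibre over $\xi$ that persists after any \'etale base change. So $\colim_{(V,v)}c(V,Y)\to Z_0(Y_K)$ is \emph{not} surjective, and surjectivity of $\colim_{(V,v)}h_0(V,Y)\to CH_0(Y_K)$ (here the class of $[x_1/x_2]$ is hit only because it equals that of the constant section $[0]$) requires showing that every class in $CH_0(Y_K)$ is represented by a cycle whose closure is finite over some \'etale neighbourhood of $\xi$. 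That is a moving statement of exactly the kind the paper supplies and your proposal does not; until it is, the stalkwise isomorphism, hence the formula for $h_0^\Nis(Y)(X)$, is not established.
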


\begin{proof} 
For $X$ smooth, consider the composition
\[c(X,Y)\to CH^{\dim Y}(X\times Y)\to CH^{\dim Y}(Y_{F(X)})=CH_0(Y_{F(X)}).\]

Let $\alpha\in c(Z,X)$ be a finite correspondence, with $Z$
connected. We claim that  the usual composition of correspondences
yields a commutative diagram 
\begin{equation}\label{eq14.1}
\begin{CD}
c(X,Y)@>>> CH^{\dim Y}(X\times Y)@>>> CH_0(Y_{F(X)})\\
@V{\alpha^*}V{(1)}V @V{\alpha^*}V{(2)}V @V{\alpha^*}V{(3)}V \\
c(Z,Y)@>>> CH^{\dim Y}(Z\times Y)@>>> CH_0(Y_{F(Z)}).
\end{CD}
\end{equation}

Indeed, $\alpha^*$ is well-defined at (2) because $Y$ is proper and the components of $\alpha$ are proper over $Z$. It obviously commutes with (1). 

For (3), we must show that if $W\in Z_{\dim X}(X\times Y)$ is supported on a
proper closed subset $X'$ of $X$, then $\alpha^*W$ goes to $0$ in
$CH_0(Y_{F(Z)})$. We argue as in the proof of \cite[Prop. 2.3.4]{birat-pure}: by
passing to the generic point of $Z$ and by base change, we reduce to
the case where $Z=\Spec F$. Then $\alpha$ is a $0$-cycle on $X$, that we may assume to be a closed point. Shrinking around $\alpha$, we may also assume $X$ to be quasi-projective.

If $\alpha\notin X'$,
then $\alpha\cap W=\emptyset$ and we are done; otherwise we may
move $\alpha$ outside $X'$ up to rational equivalence \cite{roberts}: this does
not change the value of $\alpha^*W$ in $CH_0(Y)$. 

Thus we have defined a presheaf with transfers 
\[\bar h_0(Y):X\mapsto CH_0(Y_{F(X)})\] 
which is clearly birationally invariant.
Hence, by Proposition 
\ref{l14.1} a), $\bar h_0(Y)\in \HI^\o$, and the morphism $L(Y)\to \bar
h_0(Y)$ described in \eqref{eq14.1} induces a morphism  
\[\phi:h_0^\Nis(Y)\to \bar h_0(Y)\] 
in $\HI$.

To prove that $\phi$ is an isomorphism, it is
sufficient by \cite[Prop. 4.20]{voepre} to check that it is an
isomorphism at $\Spec K$ for any extension $K$ of $F$. 
Noting that
\begin{gather*}
h_0^\Nis(Y)(\Spec K) =h_0(Y)(\Spec K) = h_0(Y_K)(\Spec K),\\
\bar h_0(Y)(\Spec K)=\bar h_0(Y_K)(\Spec K),
\end{gather*} 
the statement follows from Lemma \ref{presh}, where $F$ is replaced by $K$ and $Y$ by $Y_K$.\end{proof} 

A first consequence is:

\begin{cor}\label{p2.5} Let $Y$ be a smooth proper variety viewed as an object of $\BFC$. Then the associated presheaf of abelian groups $h_0^\o(Y)\in \HI^\o$ (compare Diagram \eqref{eq1.0}) is canonically isomorphic to $h_0^\Nis(Y)$. In particular, $h_0^\Nis(Y)$ is a projective object of $\HI^\o$.
\end{cor}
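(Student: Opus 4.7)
The plan is to identify both $h_0^\o(Y)$ and $h_0^\Nis(Y)$ as corepresenting the same functor $\sG\mapsto \sG(Y)$ on $\HI^\o$, and to deduce the isomorphism via Yoneda.

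By construction, $h_0^\o(Y)=\beta_!\alpha_! L(Y)\in\Mod\BFC=\HI^\o$ is the image of $Y\in\BFC$ under the Yoneda embedding, so for any $\sG\in\HI^\o$,
\[\Hom_{\HI^\o}(h_0^\o(Y),\sG)=\sG(Y).\]
For $h_0^\Nis(Y)$, one knows from Theorem \ref{p12.3.5} that $h_0^\Nis(Y)\in\HI^\o$, and by definition $h_0^\Nis(Y)=a\,h_0(Y)$ with $a:\PST\to\NST$ the sheafification functor of \eqref{eq3.2}. The inclusions $\HI^\o\subset \HI\subset \NST$ being full, the adjunction $a\dashv k$ yields
\[\Hom_{\HI^\o}(h_0^\Nis(Y),\sG)=\Hom_{\NST}(a\,h_0(Y),\sG)=\Hom_{\PST}(h_0(Y),\sG).\]
Since $\sG$ is homotopy invariant, the two pullbacks $i_0(X)^*,i_1(X)^*:c(X\times\A^1,Y)\to c(X,Y)$ induce the same map into $\sG$, so any morphism $L(Y)\to\sG$ factors uniquely through the cokernel $h_0(Y)$ of \eqref{eq1.3}; combined with Yoneda in $\PST$ this gives $\Hom_{\PST}(h_0(Y),\sG)=\sG(Y)$.

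Both identifications are natural in $\sG$, so by the Yoneda lemma they produce a canonical isomorphism $\phi:h_0^\o(Y)\iso h_0^\Nis(Y)$ in $\HI^\o$. Concretely, $\phi$ is the morphism obtained from the sheafification $h_0(Y)\to h_0^\Nis(Y)$ (a morphism in $\PHI$ whose target, by Theorem \ref{p12.3.5}, lies in $\HI^\o$) by the universal property of $\beta_!:\PHI\to\HI^\o$. For the ``In particular'' statement: $h_0^\o(Y)$ is by its very definition a representable object of $\Mod\BFC=\HI^\o$, hence compact and projective by Proposition \ref{p5.3}(b); the isomorphism $\phi$ transports this property to $h_0^\Nis(Y)$.

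The only real content in this argument is Theorem \ref{p12.3.5}, which guarantees that $h_0^\Nis(Y)$ belongs to $\HI^\o$ in the first place; the rest is a bookkeeping exercise in composing the adjunctions $(\alpha_!,\alpha^*)$, $(\beta_!,\beta^*)$ and $(a,k)$ while keeping track of the categories involved.
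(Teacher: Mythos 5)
Your argument is correct and is essentially the paper's proof: both identify $h_0^\o(Y)$ and $h_0^\Nis(Y)$ as having the same universal property with respect to objects of $\HI^\o$ (equivalently, as corepresenting $\sG\mapsto\sG(Y)$), with the two key inputs being Theorem \ref{p12.3.5} (so that $h_0^\Nis(Y)\in\HI^\o$) and the fact that every object of $\HI^\o$ is already a Nisnevich sheaf (Lemma \ref{l14.0} a)), and both deduce projectivity from Proposition \ref{p5.3} b). The only difference is presentational: the paper states the universal-property comparison directly, while you unwind it through the adjunctions $(\alpha_!,\alpha^*)$, $(\beta_!,\beta^*)$, $(a,k)$ and Yoneda.
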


\begin{proof}  By definition, $h_0^\Nis(Y)$ is the Nisnevich sheaf associated to the presheaf $h_0(Y)$. Let $\sF\in \HI^\o$. By Lemma \ref{l14.0} a), $\sF$ is a Nisnevich sheaf, hence any map $h_0(Y)\to \sF$ factors uniquely through $h_0^\Nis(Y)$. Since the latter is birational by Theorem \ref{p12.3.5}, $h_0^\Nis(Y)$ has the same universal property as $h_0^\o(Y)$, so they coincide. The last statement follows from Proposition \ref{p5.3} b).
\end{proof}

\subsection{Review of some results of \cite{birat-pure}}\label{s1.1} Let $\Cor_\rat$ and $\Chow^\eff$ denote respectively the category of Chow correspondences and of effective Chow motives over $F$, with integral coefficients, so that by definition $\Chow^\eff=\Cor_\rat^\natural$. In \cite[Def. 2.2.6 and 2.3.5]{birat-pure}, we defined two new categories:
\[\Chow^\b=(\Chow^\eff/\sL)^\natural, \quad \Chow^\o=(\Cor_\rat/\sI)^\natural.\]

Here $\sL$ is the $\otimes$-ideal generated by the Lefschetz motive $\bL$ while, for two smooth projective varieties $X,Y$, $\sI(X,Y)$ is the subgroup of $CH_{\dim X}(X\times Y)$ generated by correspondences with support in $Z\times Y$ for some proper closed subset $Z\subset X$. Writing $\Cor_\rat^\o=\Cor_\rat/\sI$, Hom groups in $\Cor_\rat^\o$ are given by the formula \cite[Lemma 2.3.6]{birat-pure}
\[\Cor_\rat^\o(X,Y) = CH_0(Y_{F(X)}).\]

There is a string of full functors
\[\Chow^\b\to (S_b^{-1}\Chow^\eff)^\natural\to \Chow^\o\]
which become equivalences of categories after inverting the exponential characteristic $p$ of $F$ \cite[Th. 2.4.1]{birat-pure}. 

\enlargethispage*{20pt}

\subsection{A full embedding} We now draw other consequences from Theorem \ref{p12.3.5}. For the reader's convenience, we include a proof of the following generalisation of  Lemma \ref{presh},
which is in \cite[proof of Prop. 2.1.4]{voetri}:

\begin{thm}\label{l6.4} Let $X,Y$ be two smooth projective $F$-varieties. Then \eqref{eq5.2} is an isomorphism.
\end{thm}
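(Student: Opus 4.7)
The plan is to prove bijectivity of \eqref{eq5.2} by applying Chow's moving lemma on the two smooth projective varieties $X\times Y$ (for surjectivity) and $X\times Y\times\P^1$ (for injectivity); well-definedness of \eqref{eq5.2} was already established in the proof of Lemma~\ref{presh}.

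For surjectivity, I would start with an integral cycle $Z\subset X\times Y$ of dimension $\dim X$ and apply Chow's moving lemma on the smooth projective variety $X\times Y$ to replace $Z$ by a rationally equivalent cycle $Z'$ meeting every fiber $\{x\}\times Y$ of the first projection $p_X$ in the expected dimension $0$. Since $p_X$ is proper ($Y$ being projective), $Z'\to X$ is then proper and quasi-finite, hence finite. A dimension count forces each irreducible component of $Z'$ to surject onto a connected component of $X$, so $Z'\in c(X,Y)$ represents $[Z]$ in $CH^{\dim Y}(X\times Y)$.

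For injectivity, given $\alpha\in c(X,Y)$ in the kernel of \eqref{eq5.2}, I would first pick a cycle $W$ on $X\times Y\times\P^1$, dominant over $\P^1$ and of dimension $\dim X+1$, satisfying $i_0^*W-i_\infty^*W=\alpha$, where $i_t$ denotes the inclusion of the fiber at $t$. I would then apply a refined form of Chow's moving lemma on $X\times Y\times\P^1$ to replace $W$ by a rationally equivalent cycle meeting every fiber of the projection $X\times Y\times\P^1\to X\times\P^1$ in dimension $0$ \emph{while preserving its restrictions to the divisors} $X\times Y\times\{0\}$ \emph{and} $X\times Y\times\{\infty\}$; the moved cycle lies in $c(X\times\P^1,Y)$ and still satisfies $i_0^*W-i_\infty^*W=\alpha$. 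Finally, I would convert the $\{0,\infty\}$-relation on $\P^1$ into a $\{0,1\}$-relation on $\A^1$ by pulling $W$ back along an automorphism $\phi$ of $\P^1$ with $\phi(0)=0$ and $\phi(1)=\infty$ (for instance $\phi(t)=t/(1-t)$) and restricting to $X\times\A^1=X\times(\P^1\setminus\{\infty\})$: this produces a finite correspondence $V\in c(X\times\A^1,Y)$ with $i_0^*V-i_1^*V=W_0-W_\infty=\alpha$, whence $\alpha=0$ in $h_0(X,Y)$.

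The hard part is the divisorially-constrained moving lemma in the injectivity step, which requires moving $W$ on $X\times Y\times\P^1$ without disturbing its restrictions to $X\times Y\times\{0,\infty\}$. This is a standard but technical variant of Chow's moving lemma, closely related to the one of Roberts already invoked in the proof of Theorem~\ref{p12.3.5}; once it is available, the $\phi$-reparametrization step is routine.
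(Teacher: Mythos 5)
There is a genuine gap, and it sits at the very first invocation of ``Chow's moving lemma''. The classical moving lemma (Chow, Roberts) lets you move a cycle into proper position with respect to a \emph{finite} prescribed collection of subvarieties. What your surjectivity step needs is much stronger: a cycle $Z'$ rationally equivalent to $Z$ that meets \emph{every} fibre $\{x\}\times Y$ of $p_X$ properly, i.e.\ a representative that is quasi-finite (hence, by properness of $Y$, finite) over all of $X$. This is not a finite list of incidence conditions and does not follow from the classical lemma: proper intersection with finitely many fibres, or even with the generic fibre, still allows positive-dimensional special fibres. The statement you need is the ``moving lemma for families'' of Friedlander--Lawson / the generic-equidimensionality results of Suslin, which use projective joins and linear projections in an essential way and are precisely the technical heart of the duality theorem \cite[Th. 7.1]{frivoe} that the paper's proof invokes. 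In other words, your argument assumes as a ``standard variant'' exactly the hard input that the theorem is hiding. The same objection applies, compounded, to the injectivity step: there you need equidimensionality over $X\times\P^1$ \emph{together with} control of the restrictions to the two divisors $X\times Y\times\{0\}$ and $X\times Y\times\{\infty\}$; this is again Friedlander--Voevodsky territory, not a variant of Roberts' lemma (which is only used in the paper, in the proof of Theorem \ref{p12.3.5}, to move a $0$-cycle off a fixed closed subset --- a genuinely finite condition).

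For comparison, the paper's proof takes a different and shorter route: it identifies $h_0(X,Y)$ with $h_0(z_\equi(Y,0))(X)=h_0(z_\equi(X,Y,0))(\Spec F)$ (using $L(Y)\simeq L^c(Y)$ for $Y$ proper) and $CH^{\dim Y}(X\times Y)$ with $h_0(z_\equi(X\times Y,\dim X))(\Spec F)$, and then quotes the duality theorem \cite[Th. 7.1]{frivoe} to compare the two. So the geometric content you are trying to establish by hand is correct in spirit, but to make your proof complete you would have to either quote the Friedlander--Lawson moving lemma for families explicitly (for $X\times Y$ projective it is available without resolution of singularities) or reduce to \cite[Th. 7.1]{frivoe} as the paper does; the appeal to the classical Chow moving lemma alone does not close either step.
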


\begin{proof} Let $L(Y)$ and $L^c(Y)$ be the presheaves with transfers defined in
\cite[\S 4.1]{voetri}. Then the cokernel of
$i_0^*-i_1^*$ is clearly isomorphic to
$h_0(L(Y))(X))$. On the other hand, since $Y$ is projective, the
morphism of presheaves $L(Y)\to L^c(Y)$ is an isomorphism. The latter
presheaf is canonically isomorphic to $z_\equi(Y,0)$ (compare \cite[\S
4.2]{voetri}). The group $CH^{\dim Y}(X\times Y)$, in its turn, is
canonically isomorphic to $h_0(z_\equi(X\times Y,\dim X))(\Spec
F)$. We therefore have to see
that the natural map
\[h_0(z_\equi(Y,0))(X)\to h_0(z_\equi(X\times Y,\dim X))(\Spec F)\]
is an isomorphism. But the left hand side may be further rewritten
\[h_0(z_\equi(Y,0))(X)=h_0(z_\equi(X,Y,0))(\Spec F)\]
(\cf \cite[bottom p. 142]{frivoe}). The result now follows from \cite[Th.
7.1]{frivoe}. \end{proof}

\begin{rk} Under resolution of singularities, Theorem \ref{l6.4} remains true if $X$ is only smooth quasiprojective by replacing  \cite[Th. 7.1]{frivoe} by  \cite[Th. 7.4]{frivoe} in the above proof. We shall not need this more refined result.
\end{rk}

Consider the full subcategory  $\Sm^\proj\Cor\subset \SmCor$ whose objects are smooth projective varieties, and its associated homotopy category $\sH(\Sm^\proj\Cor)\subset \sH(\SmCor)$. 
Then Theorem \ref{l6.4} yields an isomorphism of categories
\begin{equation}\label{eq2.2}
\sH(\Sm^\proj\Cor)\iso \Cor_\rat
\end{equation}
where $\Cor_\rat$ is the category of Chow correspondences (see \S \ref{s1.1}).

\begin{prop}\label{p2.6} Let $\Cor_\rat^\o$ be the category of birational Chow correspondences (see \S \ref{s1.1}). Then the identity map on objects extends to a full embedding
\[\Cor_\rat^\o\by{D} \BFC\]
which fits in the commutative diagram
\begin{equation}\label{eq2.4}
\begin{CD}
S_b^{-1}\sH(\Sm^\proj\Cor) @>A>> \BFC\\
@VCV\wr V @ADAA\\
S_b^{-1}\Cor_\rat @>B>> \Cor_\rat^\o.
\end{CD}
\end{equation}
Here $A$ is the obvious functor, $C$ is induced by \eqref{eq2.2} and $B$ is the functor from \cite[Prop. 2.3.7 c)]{birat-pure}.
\end{prop}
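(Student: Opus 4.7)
The plan is to factor a natural functor $\Phi\colon \Cor_\rat\to \BFC$ through the quotient $q\colon \Cor_\rat\twoheadrightarrow \Cor_\rat^\o = \Cor_\rat/\sI$. Using \eqref{eq2.2} and Theorem \ref{p1.1}, define $\Phi$ as the composite
\[
\Cor_\rat = \sH(\Sm^\proj\Cor) \hookrightarrow \sH(\SmCor) \to S_b^{-1}\sH(\SmCor) = \BFC,
\]
which is the identity on objects. Since $\Phi$ sends morphisms in $S_b$ to isomorphisms, it factors through $\ell\colon \Cor_\rat \to S_b^{-1}\Cor_\rat$; by inspection, this factorisation coincides with $A\circ C$.

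By Proposition \ref{p5.3} b) applied to the Yoneda embedding $\BFC\inj \HI^\o$, together with Corollary \ref{p2.5} and Theorem \ref{p12.3.5}, one has for $X, Y \in \Sm^\proj$
\[
\BFC(X, Y) = h_0^\o(Y)(X) = h_0^\Nis(Y)(X) = CH_0(Y_{F(X)}).
\]
Combined with $\Cor_\rat(X, Y) = CH_{\dim X}(X \times Y)$ from Theorem \ref{l6.4}, tracing these identifications shows that $\Phi_{X, Y}\colon CH_{\dim X}(X \times Y) \to CH_0(Y_{F(X)})$ is the natural restriction to the generic fibre, \ie the quotient by $\sI(X, Y)$ of \cite[Lemma 2.3.6]{birat-pure}. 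Hence $\Phi$ vanishes on $\sI$ and factors uniquely as $\Phi = D\circ q$ for a functor $D\colon \Cor_\rat^\o\to \BFC$ (identity on objects); this factorisation is automatically a functor because $q$ is the identity on objects and surjective on Hom groups, so every composition in $\Cor_\rat^\o$ lifts to a composition in $\Cor_\rat$, which $\Phi$ respects.

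For the commutativity of \eqref{eq2.4}, the definition of $B$ gives $q = B\circ \ell$, so
\[
(A\circ C)\circ \ell \;=\; \Phi \;=\; D\circ q \;=\; (D\circ B)\circ \ell,
\]
and the universal property of $\ell$ forces $A\circ C = D\circ B$. Finally, $D$ is a full embedding: each $D_{X, Y}$ is a bijection (both sides equal $CH_0(Y_{F(X)})$), and $D$ is the identity on objects. The main obstacle is verifying that $\Phi_{X, Y}$ coincides with restriction to the generic fibre: this reduces to carefully matching the Yoneda embedding, the Nisnevich sheafification step of Corollary \ref{p2.5}, and the explicit formula of Theorem \ref{p12.3.5}.
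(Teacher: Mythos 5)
Your proposal is correct and follows essentially the same route as the paper: both rest on identifying $\BFC(X,Y)=h_0^\o(Y)(X)=h_0^\Nis(Y)(X)=CH_0(Y_{F(X)})=\Cor_\rat^\o(X,Y)$ via Corollary \ref{p2.5}, Theorem \ref{p12.3.5} and \cite[Lemma 2.3.6]{birat-pure}, and both reduce the key verification—that the induced map $c(X,Y)\to\BFC(X,Y)$ is restriction to the generic fibre and is compatible with composition—to the diagram \eqref{eq14.1} constructed in the proof of Theorem \ref{p12.3.5}. Your repackaging via a functor $\Phi$ on $\Cor_\rat$ killing the ideal $\sI$ is only a cosmetic variant of the paper's direct construction of the isomorphism on Hom groups.
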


\begin{proof} Indeed, in view of Corollary \ref{p2.5} and \cite[Lemma 2.3.6]{birat-pure}, the isomorphism of Theorem \ref{p12.3.5} yields an isomorphism between Hom groups of the two categories $\Cor_\rat^\o$ and $\BFC$. The proof of Theorem \ref{p12.3.5} also shows that this isomorphism defines a (fully faithful) functor, and that \eqref{eq2.4} commutes if we remove $S_b^{-1}$ from the left vertical. Therefore, \eqref{eq2.4} commutes.
\end{proof}

From Propositions \ref{p5.3} a) and \ref{p2.6}, we deduce:

\begin{cor}\label{c1.1} We have a full embedding
\[\Chow^\o\inj \HI^\o\]
which sends the birational motive $h^\o(X)$ of a smooth projective variety $X$ to  $h_0^\Nis(X)$.\qed
\end{cor}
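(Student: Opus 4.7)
The statement is a clean combination of Proposition \ref{p2.6} with the general formalism of modules over additive categories. The plan has three short steps.

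First, I assemble the relevant full embeddings. Proposition \ref{p2.6} supplies a full embedding
\[ D\colon \Cor_\rat^\o \inj \BFC \]
which is the identity on objects. On the other hand, Proposition \ref{p5.3} b) tells us that the Yoneda functor extends to a full embedding of the pseudo-abelian envelope
\[ y\colon \BFC^\natural \inj \Mod\BFC = \HI^\o, \]
whose essential image consists of the compact projective objects. Composing $D$ with $y$ (and the canonical inclusion $\BFC\inj\BFC^\natural$) gives a full embedding $\Cor_\rat^\o \inj \HI^\o$. On objects, $y$ sends a smooth variety $Y$ to $h_0^\o(Y)$; when $Y$ is smooth projective, Corollary \ref{p2.5} identifies $h_0^\o(Y)$ with $h_0^\Nis(Y)$. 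So the composite full embedding sends a smooth projective variety $X$ to $h_0^\Nis(X)$.

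Second, I pass to pseudo-abelian envelopes. The target $\HI^\o$ is abelian by Proposition \ref{p5.3} a), so in particular idempotents split there. By the universal property of $(-)^\natural$, the full embedding $\Cor_\rat^\o \inj \HI^\o$ extends uniquely to an additive functor
\[ \Chow^\o = (\Cor_\rat^\o)^\natural \to \HI^\o, \]
and a standard argument (an idempotent $e$ on $X$ has image $\IM(y(e)) \subseteq h_0^\Nis(X)$, and $\Hom$ groups between such images are computed exactly by the formulas defining $(-)^\natural$) shows the extension remains fully faithful. Since the birational motive $h^\o(X)$ of a smooth projective variety $X$ corresponds to the pair $(X,\mathrm{id}_X)$ in the pseudo-abelian envelope, its image under this functor is $h_0^\Nis(X)$, as asserted.

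There is really no serious obstacle here: everything has been done in the two inputs. The only place to be careful is checking that the extension of a fully faithful functor between additive categories to their pseudo-abelian envelopes remains fully faithful when the target is already pseudo-abelian, but this is the standard universal property recalled in \S\ref{s.not}.
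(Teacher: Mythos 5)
Your proof is correct and follows essentially the same route as the paper, which deduces the corollary from Proposition \ref{p2.6} together with the Yoneda embedding $\BFC^\natural\inj\Mod\BFC=\HI^\o$ of Proposition \ref{p5.3} and the identification $h_0^\o(Y)\simeq h_0^\Nis(Y)$ of Corollary \ref{p2.5}. Your extra care with the passage to pseudo-abelian envelopes is exactly the standard argument the paper leaves implicit.
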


\enlargethispage*{20pt}

\subsection{More equivalences of categories}

\begin{thm}\label{t2.2} In Diagram \eqref{eq2.4}, all functors become equivalences of categories after inverting the exponential characteristic $p$ and passing to the pseudo-abelian envelopes.
\end{thm}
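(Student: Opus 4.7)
My plan is to reduce the claim to essential surjectivity of a single functor. The functor $C$ is an isomorphism of categories (combine \eqref{eq2.2} with Lemma \ref{l1.1}); $D$ is a full embedding by Proposition \ref{p2.6}; and $B$ becomes an equivalence after applying $[1/p]$ and $(-)^{\natural}$ by \cite[Th.\ 2.4.1]{birat-pure}, as recalled in \S\ref{s1.1}. Since $D\circ B=A\circ C$, it will suffice to show that every object of $\BFC$ becomes, in $\BFC^\natural[1/p]$, a direct summand of a smooth projective variety; both $A$ and $D$ will then be essentially surjective, and hence equivalences, in the required sense.

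Fix a smooth variety $X$. To construct the desired splitting I will combine alterations of varying degrees. For each prime $\ell\neq p$, Gabber's prime-to-$\ell$ refinement of de Jong's alteration theorem produces a smooth projective variety $\tilde X_\ell$ together with a proper, generically finite surjection $\pi_\ell\colon\tilde X_\ell\to X$ of some degree $d_\ell$ coprime to $\ell$. By generic flatness I can find dense opens $U_\ell\subseteq X$ and $V_\ell:=\pi_\ell^{-1}(U_\ell)\subseteq\tilde X_\ell$ with $\pi_\ell|_{V_\ell}\colon V_\ell\to U_\ell$ finite flat of degree $d_\ell$. Because $U_\ell\hookrightarrow X$ and $V_\ell\hookrightarrow\tilde X_\ell$ belong to $S_b$, the graph of $\pi_\ell|_{V_\ell}$ and its transpose descend to morphisms $\alpha_\ell\colon\tilde X_\ell\to X$ and $\beta_\ell\colon X\to\tilde X_\ell$ in $\BFC$, and the standard push--pull identity gives $\alpha_\ell\circ\beta_\ell=d_\ell\cdot 1_X$.

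The numerical heart of the argument is that $\gcd_{\ell\neq p}d_\ell$ is a power of $p$: were some prime $q\neq p$ to divide every $d_\ell$, then taking $\ell=q$ would force $q\mid d_q$, contradicting $\gcd(d_q,q)=1$. Bézout in $\mathbf{Z}$ therefore supplies finitely many primes $\ell_1,\dots,\ell_n\neq p$ and integers $a_i$ with $\sum_i a_i d_{\ell_i}=p^k$ for some $k\geq 0$. Setting $\tilde X:=\bigsqcup_i\tilde X_{\ell_i}$ (still smooth projective) and assembling the $a_i\alpha_{\ell_i}$ and the $\beta_{\ell_i}$ componentwise into morphisms $\alpha\colon\tilde X\to X$ and $\beta\colon X\to\tilde X$ in $\BFC$, I obtain $\alpha\circ\beta=\sum_i a_i d_{\ell_i}\cdot 1_X=p^k\cdot 1_X$, which is invertible in $\BFC[1/p]$. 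Hence $X$ is a direct summand of the smooth projective variety $\tilde X$ in $\BFC^\natural[1/p]$, establishing essential surjectivity.

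The main obstacle is marshalling Gabber's theorem in the required geometric form: one needs smooth projective alterations, not merely regular ones, which is immediate when $F$ is perfect but demands more care otherwise. The Bézout-style packaging of the various prime-to-$\ell$ alterations into a single $p$-power endomorphism is the elegant elementary device that converts this input into the desired splitting. In characteristic zero the whole machinery collapses, since $p=1$: Hironaka yields a smooth projective compactification $X\hookrightarrow\bar X$ whose inclusion already lies in $S_b$, so $X\cong\bar X$ in $\BFC$ to begin with.
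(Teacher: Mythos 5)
Your overall strategy coincides with the paper's: reduce everything to the essential surjectivity of $D$ after inverting $p$ and pseudo-abelianising, produce for each prime $\ell\neq p$ a Gabber alteration of generic degree $d_\ell$ prime to $\ell$, observe that $\gcd_\ell d_\ell$ is a power $p^k$ of $p$, and use a B\'ezout combination of the push--pull identities $\gamma_\ell\circ{}^t\gamma_\ell=d_\ell$ to exhibit $X$ as a direct summand of a smooth projective variety in $\BFC^\natural[1/p]$. The B\'ezout packaging and the idempotent-splitting conclusion are exactly those of the paper.

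There is, however, a genuine misstep at the first geometric step: you apply Gabber's theorem directly to the (in general non-proper) smooth variety $X$ and assert that it produces a smooth \emph{projective} $\tilde X_\ell$ with a surjection $\pi_\ell\colon\tilde X_\ell\to X$. This is impossible unless $X$ is already proper: any morphism from a projective $F$-variety to a separated $F$-scheme of finite type is proper, so its image is closed and proper over $F$, and surjectivity would force $X$ itself to be proper. Applied to $X$, Gabber only yields a smooth quasi-projective alteration, which does not land in $\Cor_\rat^\o$ and so does not help. The correct order of operations --- and the one the paper uses --- is to first choose a (possibly singular) compactification $\bar X_0$ of $X$, take Gabber alterations $p_i\colon\bar X_{\ell_i}\to\bar X_0$ with $\bar X_{\ell_i}$ smooth projective, shrink to a common dense open $U\subseteq X$ over which every $p_i$ is finite, and only then invoke the localisation: the dense open immersion $\coprod_i p_i^{-1}(U)\hookrightarrow\coprod_i\bar X_{\ell_i}$ is inverted in $\BFC[1/p]$, which is what transports your idempotent onto the smooth projective variety $\coprod_i\bar X_{\ell_i}$. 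Your closing remark that the ``main obstacle'' is smoothness versus regularity of the alteration misdiagnoses the issue; the obstacle is projectivity, and it is resolved by compactifying before altering. Finally, you should make explicit the last (routine) step: the idempotent on $\coprod_i\bar X_{\ell_i}$ constructed in $\BFC^\natural[1/p]$ must be lifted to $\Cor_\rat^\o[1/p]$ via the full faithfulness of $D$, where it splits, giving the essential surjectivity you need.
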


Thus, after inverting $p$, the categories $\Chow^\b$, $S_b^{-1}\Chow^\eff$, $\Chow^\o$ and $\BFC^\natural$ become equivalent.

\begin{proof}  
Let $A',B',C',D'$ be the corresponding functors. Then $C'$ is an isomorphism of categories by \eqref{eq2.2}, $B'$ is an isomorphism of categories by \cite[Th. 2.4.1]{birat-pure} and $D'$ is fully faithful by Proposition \ref{p2.6}. To conclude, it remains to show that $D'$ is essentially surjective.

Let $X\in \Sm$, and let $\bar X_0$ be a compactification of $X$. For each prime number $l\ne p$, choose by  \cite{gabber} an alteration $\bar X_l\to \bar X_0$ of generic degree prime to $l$, with $\bar X_l$ smooth. Choose a finite number of primes $l_1,\dots,l_r$ such that the gcd of the corresponding degrees $d_i$ is a power of $p$, say $p^s$. Choose a dense open subset $U\subseteq X$ such that $p_i:\bar X_{l_i}\to \bar X_0$ is finite over $U$ for all $i$. Let $U_i=p_i^{-1}(U)\subseteq \bar X_{l_i}$ and let $V=\coprod_{i=1}^s U_i$.

Let $\gamma_i\in c(U_i,U)$ be the graph of ${p_i}_{|U_i}$, so that its transpose ${}^t\gamma_i$ is still a finite correspondence. Choose integers $n_i$ such that $\sum n_id_i=p^s$. As $\gamma_i\circ {}^t\gamma_i = d_i 1_U$, we have
\[\sum n_i \gamma_i\circ {}^t\gamma_i = p^s 1_U.\]

Thus, if $a=\bigoplus \gamma_i\in c(V,U)[1/p]$ and $b=\frac{1}{p^s} \bigoplus n_i {}^t\gamma_i\in c(U,V)[1/p]$, then $ba$ is a projector on $V$ in $\SmCor[1/p]$, with image isomorphic to $U$.

Let $\bar X=\coprod \bar X_{l_i}\in \SmCor$. The inclusion $V\to \bar X$ becomes an isomorphism in $\BFC[1/p]$, hence the projector $ba$ yields a projector $\pi\in \End(\bar X)$ in the latter category, with image isomorphic to $U$, hence to $X$. Since $D$ is fully faithful, $\pi$ lifts to a projector in $\Cor_\rat^\o(\bar X,\bar X)[1/p]$, thereby concluding the proof. 
\end{proof}

\begin{cor}\label{c2.1} The graph functor $\Sm\to \SmCor$ induces a functor
\[S_b^{-1}\Sm =S_r^{-1}\Sm \to \Chow^\o[1/p].\qquad  \text{\qed}\]
\end{cor}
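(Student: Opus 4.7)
\smallskip

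\noindent\textbf{Proof plan.} The plan is to build the functor as a chain of formal constructions, with all the nontrivial content packaged into Theorem \ref{t2.2}. The graph functor $\Sm \to \SmCor$ sends any birational morphism $f : X \to Y$ in $\Sm$ to its graph $\Gamma_f$, which by definition lies in the class $S_b$ used to localise $\SmCor$ in Definition \ref{d1.1}. Hence the composition
\[\Sm \to \SmCor \to \BFC\]
sends every morphism in $S_b \subseteq \Sm$ to an isomorphism, and by the universal property of localisation it factors uniquely through a functor $S_b^{-1}\Sm \to \BFC$.

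Composing further with $\BFC \to \BFC^\natural \to \BFC^\natural[1/p]$ produces a functor $S_b^{-1}\Sm \to \BFC^\natural[1/p]$. By Theorem \ref{t2.2}, the functor $D'$ of diagram \eqref{eq2.4} is an equivalence of categories, and since $\Chow^\o = (\Cor_\rat^\o)^\natural$ by definition, this yields an equivalence
\[\Chow^\o[1/p] \iso \BFC^\natural[1/p].\]
Composing with a chosen quasi-inverse gives the desired functor $S_b^{-1}\Sm \to \Chow^\o[1/p]$. The identity $S_b^{-1}\Sm = S_r^{-1}\Sm$ is \cite[Th.~1.7.2]{Birat}, which was already used in the proof of Theorem \ref{p1.1}.

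There is essentially no obstacle to this argument: once Theorem \ref{t2.2} is in hand, everything else is a purely formal consequence of universal properties. The real content lies in the essential surjectivity of $D'$, established via a transfer argument based on Gabber's refinement of de Jong's alteration theorem; this is precisely what is unavailable in the purely cycle-theoretic setting of \cite{birat-pure}, and explains why this corollary could not be obtained by the methods of that paper.
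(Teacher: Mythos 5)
Your proposal is correct and is exactly the argument the paper intends (the corollary is stated with no proof precisely because it is this formal chain): the graph functor composed with $\SmCor\to\BFC$ inverts $S_b$ by construction, hence factors through $S_b^{-1}\Sm$, and one then composes with the equivalence $\BFC^\natural[1/p]\simeq\Chow^\o[1/p]$ supplied by Theorem \ref{t2.2}, while $S_b^{-1}\Sm=S_r^{-1}\Sm$ is \cite[Th.~1.7.2]{Birat} as the paper notes. Your closing remark correctly identifies where the actual content sits, namely the essential surjectivity of $D'$ via Gabber's alterations.
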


(See \cite[Th. 1.7.2]{Birat} for the equality $S_b^{-1}\Sm =S_r^{-1}\Sm$.)

\begin{rk} \label{r1.2} In \cite[Prop. 4.1]{birat}, we also proved that the functor 
\[S_b^{-1}\Sm^\proj\Cor\to  S_b^{-1}\SmCor=\BFC\]
is an equivalence of categories if $\car F=0$. Extending this result to positive characteristic (after inverting $p$ and adjoining idempotents) has defied all our attempts, even with the help of \cite[Th. 5.1.4]{localisation}. Fortunately we don't need such a result here, and leave it as a challenge for the interested readers.
\end{rk}

\section{Triangulated birational motives}

In this section, we construct a triangulated category of
birational geometric motives $\DM_\gm^\o$ that we compare with Voevodsky's category $\DM_\gm^\eff$ of \cite{voetri}. We also construct a full embedding $\BFC\inj \DM_\gm^\o$; this turns out to be much more elementary than Voevodsky's theory of (effective) triangulated motives, the main reason being that a birational presheaf is automatically a Nisnevich sheaf (Lemma \ref{l14.0} a)). We organise the exposition in order to highlight this.

In contrast to Voevodsky's approach but like Beilinson-Vologodsky \cite{be-vo}, we use unbounded derived categories in order to take advantage of Neeman's yoga of compactly generated triangulated categories \cite{neeman,neeman2},  which generalises results of Thomason-Trobaugh and Yao. We refer to \S \ref{s.term} for general notation and terminology on the latter.

\subsection{Review of effective triangulated motives} We start by recalling Voevodsky's construction of the category $\DM_\gm^\eff$. In \cite{voetri}, $\DM_\gm^\eff$ is defined as the pseudo-abelian envelope of  the Verdier quotient of $K^b(\SmCor)$ by the thick subcategory generated by the complexes of the form
\begin{description}
\item[$R_h$] $[\A^1_X]\by{[p]}[X]$, $X$ smooth;
\item[$R_{MV}$] $[U\cap V]\to [U]\oplus [V]\to [X]$, where $X$ is smooth and $U,V$ are two open subsets such that $X=U\cup V$.
\end{description}

By Balmer-Schlichting \cite{bs},  $\DM_\gm^\eff$ is  triangulated. The obvious tensor structure on $K^b(\SmCor)$ induces a tensor structure on $\DM_\gm^\eff$. 

The canonical embedding of $\SmCor$ into $K^b(\SmCor)$ sends $\SmCor$ to $\DM_\gm^\eff$; the image of $[X]$ under this functor is denoted by $M_\gm(X)$, or simply here by $M(X)$. The relations $R_h$ and the isomorphism of categories \eqref{eq2.2} yield after pseudo-abelianisation a functor
\cite[Prop. 2.1.4]{voetri} 
\begin{equation}\label{eq2}
\Chow^\eff\to \DM_\gm^\eff
\end{equation}
which sends the Chow motive $h(X)$ of a smooth projective variety $X$ to $M(X)$. 

To go further, Voevodsky constructs when $F$ is perfect a full embedding of $\DM_\gm^\eff$ into a larger triangulated category $\DM_-^\eff$ of sheaf-theoretic nature; this allows him to compute Hom groups of $\DM_\gm^\eff$ in terms of Nisnevich (or even Zariski) hypercohomology of certain complexes. As a byproduct, the functor \eqref{eq2} is fully faithful. We recall part of this story in \S \ref{s3.4}.

We shall now see that the perfectness of $F$ is not needed for the corresponding properties of triangulated birational motives.

\subsection{Triangulated birational motives} \label{s3.2}

Let us apply Proposition \ref{l8.1} with $\sA=\SmCor$. In this case, $\Mod\sA=\PST$, the category of presheaves with transfers. Thus we get that the Yoneda functor
\begin{equation}\label{eq.y}
K^b(\SmCor)\to D(\PST)
\end{equation}
is fully faithful and has dense image, whose pseudo-abelian envelope consists precisely of the compact objects of the right hand side.

\begin{defn}\label{d4.1} Let $R_\o\subset K^b(\SmCor)$ be the class of complexes $[U]\by{j}[X]$, where $j$ is an open immersion with dense image. We denote by (\cf Definition \ref{dA.2} for the notation $\langle R_\o\rangle,\langle L(R_\o)\rangle^\oplus$):
\begin{itemize}
\item $\DM_\gm^\o$ the pseudo-abelian envelope
of the Verdier quotient of
$K^b(\SmCor)$ by $\langle R_\o\rangle$. We denote the image of $[X]$ in $\DM_\gm^\o$ by
$M^\o(X)$. (It is triangulated by \cite{bs}.)
\item $\DM^\o$ the localisation of $D(\PST)$ with respect to $\langle L(R_\o)\rangle^\oplus$. We define $\DM_-^\o$ similarly, using $D^-(\PST)$ instead of $D(\PST)$.
\end{itemize}
\end{defn}

For the next theorem, recall the notation
\begin{equation}\label{eq3.4}
D_\sB(\sA)=\{C\in D(\sA)\mid H^i(C)\in \sB\quad \forall i\in\Z\}
\end{equation}
if $\sB$ is a strictly full subcategory of an abelian category $\sA$: this is a triangulated subcategory of $D(\sA)$ provided $\sB$ is \emph{thick} in $\sA$, \ie given a short exact sequence in $\sA$, if two terms belong to $\sB$ then so does the third. Note that $\HI^\o$ is thick in $\PST$.

\begin{thm}\label{t3.0} Let $\gamma:\SmCor\to \BFC$ be the localisation functor. \\
 a) The functor $K^b(\gamma)^\natural:K^b(\SmCor)^\natural\to K^b(\BFC)^\natural$ factors through $\DM_\gm^\o$. The total derived functor $L\gamma_!:D(\PST)\to D(\HI^\o)$ factors through $\DM^\o$. This yields a naturally commutative diagram
\[\xymatrix{
&\SmCor\ar[d]_\gamma\ar[r]^{\eta} &K^b(\SmCor)^\natural\ar[d]_{\bar \gamma}\ar[r]^{\iota}& D(\PST)\ar[d]_{\bar \gamma_!}\\
\Chow^\o\ar[r]^{D^\natural}&\BFC^\natural\ar[r]^{\eta^\o}\ar[rd]^{\eta'}&\DM_\gm^\o\ar[r]^{\iota^\o}\ar[d]^w& \DM^\o\ar[d]^{w_\oplus}\\
&&K^b(\BFC)^\natural\ar[r]^{\iota'}&D(\HI^\o)
}\]
in which all functors not starting from $\Chow^\o$, $\SmCor$ or $\BFC^\natural$ are triangulated.\\
b) The functors $\eta,\eta^\o$ and $\eta'$ are fully faithful. The functors $\iota,\iota^\o$ and $\iota'$ are fully faithful with dense images, and identify their domains with the full subcategory of compact objects of their range.\\
c) The functor $\bar \gamma_!$ has a (fully faithful) right adjoint $\bar \gamma^*$, which itself has a right adjoint $\bar \gamma_*$. The essential image of $\bar \gamma^*$ is $D_{\HI^\o}(\PST)$, where $\HI^\o$ is embedded in $\PST$ by means of $\gamma^*$.\\
d) Via $\bar \gamma^*$, the natural $t$-structure of $D(\PST)$ induces a $t$-structure on $\DM^\o$, with heart $\HI^\o$; the functor $\bar \gamma_!$ (\resp $\bar \gamma_*$) is  right (\resp left) $t$-exact.\\
e) For $X,Y\in \Sm$ and $\sF\in \HI^\o$, we have: 
\begin{align*}
\DM^\o(M^\o(X),\sF[q])&=
\begin{cases}
0& \text{for $q\ne 0$}\\
\sF(X)&\text{for $q=0$},
\end{cases}\\
\DM^\o(M^\o(X),M^\o(Y)[i])&=0 \quad \text{for } i>0\\
\sH^i(\bar\gamma^* M^\o(Y))&=0 \quad \text{for } i>0.
\end{align*}
f) If  $Y$ is proper, we have 
\begin{align*}
\DM^\o(M^\o(X),M^\o(Y))&=CH_0(Y_{F(X)})\\
\sH^0(\bar\gamma^* M^\o(Y))&=h_0^\Nis(Y).
\end{align*}
The functor $D^\natural$ is fully faithful (hence so is $\eta^\o D^\natural$).\\
g) The obvious functor
\[\phi:D(\HI^\o)\to D_{\HI^\o}(\PST)\simeq \DM^\o\]
is right adjoint to $w_\oplus$, $t$-exact and conservative; it induces the identity on the hearts.\\
h) The functor $w_\oplus$ is right $t$-exact and induces the identity on the hearts; its restriction to $\DM^\o_-$ is conservative. The functor $w$ is conservative as well.
\end{thm}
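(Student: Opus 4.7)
The plan is to exploit the adjoint triple $(\gamma_!,\gamma^*,\gamma_*)$ induced by the localization functor $\gamma:\SmCor\to\BFC$ between the module categories $\PST=\Mod\SmCor$ and $\HI^\o=\Mod\BFC$, together with the decisive acyclicity furnished by Proposition \ref{l14.1}: every object of $\HI^\o$ is automatically a Nisnevich sheaf with vanishing higher Nisnevich cohomology. This is what allows us to sidestep Voevodsky's machinery of \cite{voepre} and the hypothesis that $F$ be perfect. Observe first that $\gamma^*$ is just the inclusion $\HI^\o\subset\PST$, hence is exact because kernels and cokernels of morphisms between $\BFC$-modules are computed pointwise on $\SmCor$; consequently $\bar\gamma^*$ requires no deriving, whereas $\bar\gamma_!=L\gamma_!$ and $\bar\gamma_*=R\gamma_*$ do.

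For a), the factorizations are universal-property statements: $\gamma$ sends every birational morphism to an isomorphism, so the cone of $[U]\by{j}[X]\in R_\o$ is contractible in $K^b(\BFC)$; on the derived side $L\gamma_!L(U)\to L\gamma_!L(X)$ is $h_0^\o(U)\to h_0^\o(X)$, an isomorphism in $\HI^\o$, so $L\gamma_!$ kills $\langle L(R_\o)\rangle^\oplus$ and descends to $\DM^\o$. For c), $\bar\gamma^*$ is right adjoint to $\bar\gamma_!$ formally, and $\bar\gamma_*$ exists by Brown representability in $D(\PST)$; the counit $\bar\gamma_!\bar\gamma^*\to\id$ is invertible since $\gamma_!\gamma^*=\id$ on $\HI^\o$, proving fully faithfulness of $\bar\gamma^*$. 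The essential image equals $D_{\HI^\o}(\PST)$ by a standard truncation argument which uses that $\HI^\o$, being a localization of $\Mod\SmCor$ by an additive class, is closed under cohomology in $\PST$. Transporting the natural $t$-structure along this equivalence yields d), with right (\resp left) $t$-exactness of $\bar\gamma_!$ (\resp $\bar\gamma_*$) automatic from the adjunction.

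The computations e), f) then drop out: $L(X)$ is representable hence projective in $\PST$, so for any $\sF\in\HI^\o$,
\[\DM^\o(M^\o(X),\sF[q])=\Ext^q_\PST(L(X),\gamma^*\sF)=\begin{cases}\sF(X) & q=0\\ 0 & q\ne 0.\end{cases}\]
Since $L(Y)$ is likewise projective in $\PST$, $L\gamma_!L(Y)=\gamma_!L(Y)=h_0^\o(Y)$ sits in degree zero, whence $\sH^i(\bar\gamma^*M^\o(Y))=0$ for $i>0$ and $\DM^\o(M^\o(X),M^\o(Y)[i])=0$ for $i>0$. For $Y$ proper, Corollary \ref{p2.5} identifies $h_0^\o(Y)=h_0^\Nis(Y)$, and Theorem \ref{p12.3.5} computes its sections as $CH_0(Y_{F(X)})$. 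Full faithfulness of $D^\natural$ is Proposition \ref{p2.6}, and that of $\eta^\o D^\natural$ follows by matching the two Hom formulae.

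For b), full faithfulness of $\eta$ is classical and of $\iota,\iota'$ is the general Proposition \ref{l8.1} applied to $\SmCor$ and $\BFC$; compactness and density in each range follow simultaneously, and $\iota^\o$ is deduced from the Ext-vanishing above. Full faithfulness of $\eta^\o$ itself is the comparison $\BFC(X,Y)=h_0^\o(Y)(X)=\DM_\gm^\o(M^\o(X),M^\o(Y))$. For g), the functor $\phi$ is simply $\bar\gamma^*$ read through the equivalence $\DM^\o\simeq D_{\HI^\o}(\PST)$; it is $t$-exact and, being fully faithful, conservative, inducing the identity on hearts by $\gamma_!\gamma^*=\id$. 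For h), right $t$-exactness of $w_\oplus$ is that of $L\gamma_!$; identity-on-hearts is again $\gamma_!\gamma^*=\id$; conservativity on $\DM^\o_-$ follows by a truncation induction using this identity, and for $w$ by restriction to compact objects. I expect the subtlest ingredient throughout to be the identification $\DM^\o\simeq D_{\HI^\o}(\PST)$ in the \emph{unbounded} setting: its engine is once more the acyclicity of Proposition \ref{l14.1}, which lets the truncation/projective-resolution argument run without invoking Voevodsky's contraction formalism and without assuming $F$ perfect.
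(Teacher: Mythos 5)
Your architecture is in fact the paper's own: the authors abstract it as Theorem \ref{tC.1}, a statement about an arbitrary localisation $Q:\sA\to S^{-1}\sA$ of additive categories, proved from Keller's Proposition \ref{l8.1} together with Neeman's Theorems \ref{neem2} and \ref{neem3}, and then specialise to $\gamma:\SmCor\to\BFC$, with part f) supplied separately by Theorem \ref{p12.3.5}, Corollary \ref{p2.5} and Proposition \ref{p2.6} --- exactly the inputs you cite. But one step of your g) is wrong. You assert that $\phi:D(\HI^\o)\to\DM^\o$ is fully faithful and deduce its conservativity from that. Fullness is false: the paper proves in \S\ref{r5.1}, using Remark \ref{r5.2}, that $\phi$ is \emph{not} full. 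Concretely, for a suitable smooth projective surface $Y$ one has $h_1^\o(Y)\ne 0$, while $\sH^0(M^\o(Y))=h_0^\Nis(Y)$ is projective in $\HI^\o$ by Corollary \ref{p2.5}; if $\phi$ were full, then $\DM^\o(h_0^\Nis(Y),A[i])$ would equal $\Ext^i_{\HI^\o}(h_0^\Nis(Y),A)=0$ for $i>0$, and the Yoneda argument of \S\ref{r5.1} would force $M^\o(Y)\iso h_0^\Nis(Y)[0]$, a contradiction. Conservativity of $\phi$ must instead be obtained as in Lemma \ref{lC.1}(i): $\phi$ is $t$-exact, induces a functor with zero kernel (the identity) on hearts, and the canonical $t$-structure on $D(\HI^\o)$ is non-degenerate.

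A secondary gap is in b): the full faithfulness of $\iota^\o$ and the identification of $\DM_\gm^\o$ with the compact objects of $\DM^\o$ cannot be ``deduced from the Ext-vanishing above''. That computation gives Hom groups in $\DM^\o$, whereas the issue is to compute Hom groups in the Verdier quotient $K^b(\SmCor)^\natural/\langle R_\o\rangle$ and compare the two; the required input is Neeman's localisation theorem (Theorem \ref{neem2}), which says that for a compactly generated $\sS$ and $\sR=\langle R\rangle^\oplus$ with $R\subset\sS^c$, the natural functor $\sS^c/\sR^c\to(\sS/\sR)^c$ is a full, dense embedding. Relatedly, for the existence of $\bar\gamma_*$ you should apply Brown representability to $\DM^\o$ (not to $D(\PST)$), after checking that $\bar\gamma^*$ commutes with direct sums (Theorem \ref{neem3}(v)). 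With these repairs your argument coincides with the paper's; the rest of your computations in a), c)--f) and h) are sound.
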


\begin{proof} Everything follows directly from Proposition \ref{l8.1} and Theorem \ref{tC.1}, except for f) which follows from Theorem \ref{p12.3.5}, Corollary \ref{p2.5} and Proposition \ref{p2.6}. 
\end{proof}

\enlargethispage*{20pt}

\begin{rk} By theorem \ref{t2.2}, $D^\natural$ becomes essentially surjective after inverting the exponential characteristic $p$.
\end{rk}

To Theorem \ref{t3.0}, we add:

\begin{prop}\label{p3.2} The $\otimes$-structure on $\SmCor$ induces a $\otimes$-structure on all categories in the diagram of Theorem \ref{t3.0}, and all functors in this diagram are $\otimes$-functors. The $\otimes$-structures are compatible with the triangulated structures when applicable.
\end{prop}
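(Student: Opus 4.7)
The plan is to propagate the tensor structure from $\SmCor$ through the diagram stage by stage, at each stage checking that the quotient or localizing class used to pass to the next category is a $\otimes$-ideal. First I would note that the tensor on $\SmCor$ extends to $K^b(\SmCor)$ via total complexes and to $K^b(\SmCor)^\natural$ in the standard way; and that, since the class $S_b$ of birational morphisms is closed under products (observation after Definition \ref{d1.1}), the arguments of \cite[Prop.~A.1.2, Th.~A.3.1]{birat-pure} yield a tensor on $\BFC$, and hence on $\BFC^\natural$ and $K^b(\BFC)^\natural$. The tensor on $\Chow^\o$ is already constructed in \cite{birat-pure}.

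Second, I would handle the Verdier quotient defining $\DM_\gm^\o$: the thick subcategory $\langle R_\o\rangle$ is a $\otimes$-ideal, because for any open immersion $j:U\hookrightarrow X$ with dense image and any $Y\in\SmCor$ the map $j\times 1_Y:U\times Y\to X\times Y$ is of the same form. Tensoring any generator of $\langle R_\o\rangle$ with an arbitrary object of $K^b(\SmCor)^\natural$ therefore lands back in $\langle R_\o\rangle$, so the tensor descends to $\DM_\gm^\o$ and to its pseudo-abelian envelope.

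Third, for $D(\PST)$ and $D(\HI^\o)$ I would equip each with its left derived Day-convolution tensor product. From the end of \S\ref{s.not}, $\PST=\Mod\SmCor$ inherits a convolution tensor for which $L(X)\otimes L(Y)=L(X\times Y)$, and free objects are flat since tensoring with a representable commutes with colimits and with short exact sequences. By Proposition \ref{p5.3}(c), every object admits a resolution by free objects, so $\otimes^L$ exists and makes $D(\PST)$ a tensor triangulated category, and likewise for $D(\HI^\o)$. To obtain a tensor on $\DM^\o$ I need $\langle L(R_\o)\rangle^\oplus$ to be a $\otimes$-ideal; since $L(U)\otimes^L L(Y)\to L(X)\otimes^L L(Y)$ identifies with $L(U\times Y)\to L(X\times Y)$, the generating set $L(R_\o)$ is tensor-stable, and the localizing subcategory it generates is then a $\otimes$-ideal by the standard Neeman-style fact that in a compactly generated tensor triangulated category, the localizing subcategory generated by a tensor-stable set of compact objects is itself a $\otimes$-ideal.

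Finally, I would verify that every functor in the diagram of Theorem \ref{t3.0} is monoidal. This is forced by construction for $\gamma, \bar\gamma, \eta, \eta^\o, \eta', \iota, \iota^\o, \iota', w$ and $w_\oplus$ (which come from a monoidal additive functor, or from a Verdier/Bousfield localization by a $\otimes$-ideal); $\bar\gamma_!$ is monoidal via the extension mechanism of \S\ref{s.not} applied to the monoidal $\gamma$; and $D^\natural$ is monoidal by Proposition \ref{p2.6} combined with Corollary \ref{p2.5}. Compatibility of each $\otimes$ with the triangulated structure reduces to the standard fact that $\otimes^L$ and the total tensor product of complexes are bi-triangulated, a property preserved under Verdier quotients and Bousfield localizations by $\otimes$-ideals. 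The main obstacle I expect is the derived-category step for $D(\PST)$ and its inheritance to $\DM^\o$: one must check carefully that Day convolution on $\PST$ derives properly and that the \emph{localizing} subcategory $\langle L(R_\o)\rangle^\oplus$ --- not merely its thick part --- is a $\otimes$-ideal. Both points reduce to flatness of free objects together with tensor-stability of the compact generators $L(R_\o)$, after which the remaining checks are routine diagram chases.
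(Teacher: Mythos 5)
Your proposal is correct and its key point is exactly the paper's: the paper's entire proof consists of the observation that $j\times 1_Y:U\times Y\to X\times Y$ is again a dense open immersion, i.e.\ that the generating class $R_\o$ is tensor-stable, from which the descent of the $\otimes$-structures to the quotients and localisations follows. The remaining machinery you spell out (Day convolution on $\Mod\sA$ and monoidality of $f_!$, derived tensor via flat free objects, $\otimes$-ideals generated by tensor-stable compacts) is treated as standard background in the paper (\cf the end of \S\ref{s.not}), so your argument is a correct, more detailed rendering of the same proof.
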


\begin{proof} Indeed, if $U\by{j}X$ is a dense open immersion, then $U\times Y\by{j\times 1_Y} X\times Y$ is also a dense open immersion for any $Y\in \Sm$.
\end{proof}

The next result is deeper:

\begin{prop}\label{p5.1}   The thick subcategory $\langle R^\o\rangle^\natural \subset \DM_\gm^\eff$ contains all motives of the form
$M(1):=M\otimes \Z(1)$. If $F$ is perfect, this is an equality and the functor $\DM_\gm^\eff\to \langle R^\o\rangle^\natural$ given by $M\mapsto M(1)$
is an equivalence of categories. Similarly, $\DM^\eff(1)\subseteq \langle R^\o\rangle^\oplus$ with equality when $F$ is perfect.
\end{prop}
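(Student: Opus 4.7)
The plan is to handle the two inclusions separately, then deduce the equivalence of categories from Voevodsky's cancellation theorem, and finally bootstrap the compact statement to the large one via Neeman's yoga. For the inclusion $\DM_\gm^\eff(1)\subseteq\langle R^\o\rangle^\natural$, the key computation is that $\Z(1)[2]$ is the cone of $M(\bG_m)\to M(\A^1)$ in $\DM_\gm^\eff$: one has $M(\A^1)\simeq\Z$ from the relations $R_h$, and $M(\bG_m)\simeq\Z\oplus\Z(1)[1]$ by the very definition of $\Z(1)$. Since the two-term complex $[\bG_m]\to[\A^1]$ is a generator of $R^\o$, this already gives $\Z(1)\in\langle R^\o\rangle^\natural$. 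More generally, for an arbitrary $X\in\Sm$ the map $\bG_m\times X\inj \A^1\times X$ is still a dense open immersion, so by compatibility of $\otimes$ with the triangulation (Proposition~\ref{p3.2}) one obtains $M(X)(1)[2]\in\langle R^\o\rangle$. The full subcategory $\{M\in\DM_\gm^\eff : M(1)\in\langle R^\o\rangle^\natural\}$ is thick and contains all generators $M(X)$, hence equals $\DM_\gm^\eff$.

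For the reverse inclusion assuming $F$ perfect, we must show every generator of $\langle R^\o\rangle$---the cone of a dense open immersion $j\colon U\inj X$ with $X,U\in\Sm$---lies in $\DM_\gm^\eff(1)$. Set $Z=X\setminus U$ with its reduced structure and argue by Noetherian induction on $\dim Z$. When $Z$ is smooth of pure codimension $c\geq 1$ in $X$, Voevodsky's Gysin triangle gives $\mathrm{cone}(j)\simeq M(Z)(c)[2c]\in\DM_\gm^\eff(1)$. For general $Z$, perfectness of $F$ ensures that the smooth locus $Z^0\subseteq Z$ is dense open, so $W:=Z\setminus Z^0$ has strictly smaller dimension. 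Setting $V=X\setminus W$, the smooth closed immersion $Z^0\inj V$ has dense open complement $U$, and the Gysin triangle yields $\mathrm{cone}(M(U)\to M(V))\in\DM_\gm^\eff(1)$; the induction hypothesis applied to the dense open immersion $V\inj X$ (whose complement $W$ has smaller dimension than $Z$) gives $\mathrm{cone}(M(V)\to M(X))\in\DM_\gm^\eff(1)$. The octahedral axiom applied to $U\subset V\subset X$ then exhibits $\mathrm{cone}(j)$ as an extension of these two, hence in $\DM_\gm^\eff(1)$.

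The equivalence of categories for perfect $F$ now follows: essential surjectivity onto $\langle R^\o\rangle^\natural$ is the reverse inclusion just proved, while fully faithfulness of $-\otimes\Z(1)\colon\DM_\gm^\eff\to\DM_\gm^\eff$ is Voevodsky's cancellation theorem. Cancellation is also what legitimises the octahedron step above, since it is what guarantees that the essential image $\DM_\gm^\eff(1)$ is genuinely a thick subcategory of $\DM_\gm^\eff$. For the large version, $\DM^\eff(1)\subseteq\langle R^\o\rangle^\oplus$ is immediate from the compact case, since $-\otimes\Z(1)$ commutes with arbitrary coproducts and $\langle R^\o\rangle^\oplus$ is by construction closed under coproducts. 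When $F$ is perfect, one bootstraps the equality by noting that $\langle R^\o\rangle^\oplus$ is compactly generated with compact part $\langle R^\o\rangle^\natural$ (Neeman's theorem) and invoking cancellation on compacts.

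The main obstacle is the inductive step in the reverse inclusion: one must combine generic smoothness of the reduced complement $Z$ (for which perfectness is needed) with Voevodsky's Gysin triangle for smooth closed immersions in $\DM_\gm^\eff$---itself a nontrivial input---to reduce the singular case to the smooth one, all while staying inside a subcategory $\DM_\gm^\eff(1)$ that is only known to be thick through cancellation. Everything else in the argument is formal once these ingredients are in place.
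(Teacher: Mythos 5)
Your proof is correct and follows essentially the same route as the paper: the only cosmetic difference is that you extract $\Z(1)$ from the dense open immersion $\bG_m\inj\A^1$ rather than from $\A^1\inj\P^1$, and the converse inclusion is the same Noetherian induction on the reduced complement combining generic smoothness (perfectness), the Gysin triangle, and the cancellation theorem to know that $\DM_\gm^\eff(1)$ is triangulated. The reduction of the $\langle R^\o\rangle^\oplus$ statement to the compact case by density is also the paper's argument.
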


\begin{proof} By density, the case of $\langle R^\o\rangle^\oplus$ reduces to that of $\langle R^\o\rangle^\natural$.
  Let $\sD$ be the full subcategory of $\DM_\gm^\eff$ consisting of the motives of the form $M(1)$. Since
$\Z\oplus \Z(1)[2]=M^\o(\P^1)=M^\o(\A^1)=\Z$ in $\DM_\gm^\o$, $\Z(1)=0$ in
$\DM_\gm^\o$. Therefore   $\sD\subseteq \langle R^\o\rangle^\natural$ by Proposition \ref{p3.2}. 

To see the converse inclusion when $F$ is perfect, we first prove that $\sD$ is a triangulated subcategory of $\DM_\gm^\eff$. We have to show that, if $M,N\in \DM_\gm^\eff$ and $f\in
\Hom(M(1),N(1))$, then the cone of $f$ is of the form $P(1)$. This follows
from the cancellation theorem of \cite{voecan}.  The
cancellation theorem also shows that $M\mapsto M(1)$ yields an equivalence of categories $\DM_\gm^\eff\iso \sD$.

We now have to prove that $M(U)\by{j_*}M(X)$ is an isomorphism in $\DM_\gm^\eff/\sD$ for any open immersion
$j$. We argue by Noetherian induction on the (reduced) closed complement
$Z$ in a standard way. For simplicity, let us say that the open immersion $j$ is \emph{pure} if
$Z$ is smooth. If $j$ is pure, then the cone of $j_*$ is isomorphic to
$M(Z)(c)[2c]$, where $c=\codim_X Z$ by the Gysin exact triangle of \cite[Prop. 3.5.4]{voetri}, so the claim is true in this case. In general, filtering $Z$ by its successive singular loci, we may write $j$ as a composition of pure open immersions, and the claim follows.
\end{proof}

\enlargethispage*{20pt}

\subsection{Relationship between effective and birational triangulated motives}\label{s3.3}  We already introduced three classes $R_h,R_{MV}$ and $R_\o$ of objects of $K^b(\SmCor)$. Here we shall use two others:
\begin{itemize}
\item $R_\Nis=\{[B]\to [A]\oplus [Y]\to [X] \}$, where
\begin{equation}\label{eq3.3}
\begin{CD}
B@>j'>> Y\\
@V{\pi'}VV @V{\pi}VV \\
A @>j>> X
\end{CD}
\end{equation}
is an upper-distinguished square in the sense of \cite[Def. 12.5]{mvw} (also called elementary distinguished square in \cite[p. 96, Def. 1.3]{mv}).
\item $R=R_h\cup R_\Nis$.
\end{itemize}

\begin{lemma}\label{l2.1} We have the following inclusions:
\begin{thlist}
\item $R_{MV}\subset R_\Nis$;
\item $\langle R_\Nis\rangle  \subset \langle R_\o\rangle$;
\item $\langle R_h\rangle  \subset \langle R_\o\rangle$;
\item $\langle R\rangle \subset \langle R_\o\rangle$.
\end{thlist}
Moreover, the classes $R_h,R_{MV},R_\Nis,R$ and $R_\o$ are stable under $-\otimes [X]$ for any $X\in \Sm$.
\end{lemma}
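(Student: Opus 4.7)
The plan is to dispatch the stability statement and (i) immediately, derive (iv) from (ii) and (iii), and concentrate the real work on (ii) and (iii). Stability under $-\otimes[X]$ is straightforward: the defining properties of $R_h$, $R_{MV}$, $R_\Nis$, $R_\o$ and $R$ (being an $\A^1$-projection, a cover by two opens, an upper-distinguished square, a dense open immersion) are each preserved by the flat base change $-\times X$. This settles the last assertion; (i) is then immediate, since a Mayer--Vietoris square with $X=U\cup V$ is visibly upper-distinguished with $A=U$, $Y=V$, $B=U\cap V$ and complement isomorphism given by the restriction of $V\inj X$; and (iv) follows at once from (ii) and (iii) since $R=R_h\cup R_\Nis$.

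For (ii), consider the complex $C=([B]\to[A]\oplus[Y]\to[X])$ attached to \eqref{eq3.3}. The first step is to decompose $X$ into connected components: on components $X_\alpha$ entirely contained in $Z:=X\setminus A$ the restricted square degenerates, since $\pi$ restricts to an isomorphism $Y\setminus B\iso Z$, so the $X_\alpha$-piece of $C$ is nullhomotopic in $K^b(\SmCor)$. One may thus assume $j:A\to X$ is a dense open immersion, and a short argument using \'etaleness of $\pi$ (a component of $Y$ missing $B$ would give an open, nowhere dense $\pi$-image in $X$) shows that $j':B\to Y$ is automatically dense as well. The second step is to exhibit the short exact sequence of complexes
\[0\to(0\to[A]\by{j_*}[X])\to C\to([B]\by{-j'_*}[Y]\to 0)\to 0,\]
whose outer terms are (shifts of) elements of $R_\o$; the induced distinguished triangle in $K^b(\SmCor)$ then places $C$ in $\langle R_\o\rangle$.

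For (iii), stability reduces the task, by tensoring with $[X]$, to the single complex $[\A^1]\to[\Spec F]$. The cleanest approach is to invoke Theorem \ref{t3.0}(b), which provides a fully faithful embedding $\BFC^\natural\inj\DM_\gm^\o=(K^b(\SmCor)/\langle R_\o\rangle)^\natural$, combined with the fact that the projection $\A^1\to\Spec F$ is already an isomorphism in $\BFC$ by \cite[Th. 1.7.2]{Birat} (the localisation inverting dense open immersions automatically inverts $\A^1$-projections). Hence the cone of $[\A^1]\to[\Spec F]$ is zero in $\DM_\gm^\o$; since it is an object of $K^b(\SmCor)/\langle R_\o\rangle$ carrying the identity idempotent, it must already be zero in $K^b(\SmCor)/\langle R_\o\rangle$ itself, i.e.\ lies in $\langle R_\o\rangle$.

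The main obstacle is (iii). Purely internal attacks in $K^b(\SmCor)$, whether via a Gysin triangle for $\{0\}\hookrightarrow\A^1$, or via the section-at-$\infty$ splitting $[\P^1]\cong[\Spec F]\oplus N$ combined with the Mayer--Vietoris cover $\P^1=\A^1\cup\A^1$, each circle back to the very question of whether $\A^1\to\Spec F$ becomes invertible modulo $\langle R_\o\rangle$. Passing through $\BFC$ and Theorem \ref{t3.0} cuts the knot by importing the relation from the localisation-level Theorem 1.7.2 of \cite{Birat}, avoiding any need to reproduce that argument at the level of complexes.
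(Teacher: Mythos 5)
Your proof is correct and follows essentially the same route as the paper's (very terse) proof: for (ii) the paper says only that $j$ and $j'$ are open immersions, i.e.\ the degreewise-split cone decomposition you make explicit, and for (iii) it cites \cite[Th.~1.7.2]{Birat}, which is exactly your passage through $\BFC$ (note that for this you only need the factorisation of $\SmCor\to K^b(\SmCor)/\langle R_\o\rangle$ through $\BFC$ from Theorem~\ref{t3.0}~a), not the full faithfulness of b)). Two comments. First, your reduction to the case where $j$ and $j'$ are \emph{dense} --- discarding the components of $X$ contained in $X\setminus A$, over which the complex is nullhomotopic, and then checking that every component of $Y$ meets $B$ --- is genuinely necessary, since $R_\o$ contains only dense open immersions; the paper's one-line justification of (ii) silently skips this point, and your treatment of it is correct. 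Second, your argument for (iii) literally yields $\langle R_h\rangle\subset\langle R_\o\rangle^\natural$ rather than $\langle R_h\rangle\subset\langle R_\o\rangle$: an object that becomes zero in the Verdier quotient $K^b(\SmCor)/\langle R_\o\rangle$ lies only in the thick closure of $\langle R_\o\rangle$ (it is a direct summand of an object of $\langle R_\o\rangle$), not necessarily in $\langle R_\o\rangle$ itself. The paper's own appeal to \cite[Th.~1.7.2]{Birat} has the same feature, and the distinction is immaterial for every use of the lemma (the Verdier quotients by $\langle R_\o\rangle$ and $\langle R_\o\rangle^\natural$ agree, and $\DM_\gm^\o$ is a pseudo-abelian envelope in any case), but it is worth being aware that the inclusion is proved only up to thick closure.
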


\begin{proof} This is essentially a reformulation of Lemmas \ref{l14.0} and \ref{l14.1}. (i) is obvious. (ii) follows from the fact that, in \eqref{eq3.3}, $j$ and $j'$ are open immersions. (iii) follows from \cite[Th. 1.7.2]{Birat}. (iv) follows from (ii) and (iii). The last statement is obvious (and already observed for $R_\o$).
\end{proof}

We also recall the following important fact  \cite[p. 1749, (4.3.1)]{be-vo}:

\begin{prop}\label{p3.1} Let $La:D(\PST)\to D(\NST)$ be the  functor  induced by the exact functor $a$ of \eqref{eq3.2}. Then $La$ is a localisation, with kernel $\langle R_\Nis\rangle^\oplus$. \qed
\end{prop}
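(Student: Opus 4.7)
Since $a\colon\PST\to \NST$ is exact, $La$ is just termwise sheafification, and it admits the (underived fully faithful) right adjoint $k\colon \NST\inj \PST$, which is left exact and hence produces a total right derived functor $Rk\colon D(\NST)\to D(\PST)$ with $La\dashv Rk$. The proposition splits into two claims: (A) $La$ is a localisation, equivalently the counit $La\circ Rk\Rightarrow \mathrm{id}_{D(\NST)}$ is an isomorphism; and (B) $\ker(La)=\langle R_\Nis\rangle^\oplus$.

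For the easy containment $\langle R_\Nis\rangle^\oplus\subseteq \ker(La)$, I would use that each Yoneda image $L(X)=\SmCor(-,X)$ is already a Nisnevich sheaf with transfers (\cite[Lemma 6.2]{mvw}), so for an upper distinguished square as in \eqref{eq3.3} the complex $L(B)\to L(A)\oplus L(Y)\to L(X)$ is \emph{already} a complex of sheaves, and its exactness is precisely the Mayer--Vietoris property characterising the Nisnevich cd-structure. Since $\ker(La)$ is triangulated and closed under arbitrary direct sums ($a$ being exact and sum-preserving), it absorbs the whole localising subcategory $\langle R_\Nis\rangle^\oplus$.

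For claim (A), I would first check on sheaves concentrated in a single degree: if $\sF\in\NST$, then the cohomology presheaves of $Rk\sF$ are $X\mapsto H^i_\Nis(X,\sF)$, which sheafify to $\sF$ in degree $0$ and to $0$ in positive degrees, so $La\,Rk\,\sF\iso \sF$. Extending to arbitrary complexes requires a convergence argument; the cleanest route is the Nisnevich-local (injective) model structure on $\mathrm{Ch}(\PST)$, whose homotopy category is $D(\NST)$ and whose localisation functor is $La$ --- this is the viewpoint of Beilinson--Vologodsky. Once $La$ is known to be a Bousfield localisation, the general theory of compactly generated triangulated categories (Neeman) identifies $\ker(La)$ as the localising subcategory generated by the cones of the units $L(X)\to Rk\,La\,(L(X))$ for $X\in\Sm$. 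These cones measure the failure of $L(X)$ to satisfy Nisnevich descent; by Voevodsky's cd-structure theorem, Nisnevich descent for a sheaf is equivalent to excision along upper distinguished squares, so the cones lie in $\langle R_\Nis\rangle^\oplus$, giving the reverse inclusion.

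The principal obstacle is passing from bounded to \emph{unbounded} complexes: the counit isomorphism and the identification of the kernel behave well on $D^+$ via the classical hypercohomology spectral sequence, but the Nisnevich cohomological dimension is not uniformly bounded on $\Sm$, so convergence on all of $D(\PST)$ is delicate. The model-categorical framework of Beilinson--Vologodsky (equivalently, Neeman's machinery of compact generators applied to the compact family $\{L(X)\}_{X\in\Sm}$) is the tool of choice to handle this cleanly and to pin down the kernel as exactly $\langle R_\Nis\rangle^\oplus$ rather than some a priori larger localising subcategory.
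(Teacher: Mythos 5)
Your plan is correct and is essentially the paper's own approach: Proposition \ref{p3.1} is stated there with no proof at all beyond the citation of Beilinson--Vologodsky \cite{be-vo}, and their argument is exactly the Bousfield-localisation/cd-structure route you describe (local model structure on complexes of presheaves with transfers, descent for upper distinguished squares, finite Nisnevich cohomological dimension of each $X\in\Sm$ to handle unbounded complexes). The only two points to make explicit when writing this up are that identifying $\ker(La)$ with the localising subcategory generated by the cones of the unit maps $L(X)\to Rk\,La\,L(X)$ requires $Rk\circ La$ to commute with direct sums (which holds precisely because each $X$ has finite Nisnevich cohomological dimension), and that the containment of those cones in $\langle R_\Nis\rangle^\oplus$ is the actual content of the Morel--Voevodsky/cd-structure descent theorem rather than a formal consequence of it.
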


It is now convenient for the exposition to introduce the ``Nisnevich competitor" of $\DM_\gm^\eff$:

\begin{defn} The category $(\DM_\gm^\eff)_\Nis$ is the pseudo-abelian envelope of the Verdier quotient $K^b(\SmCor)/\langle R\rangle$. We also define $\DM^\eff=D(\PST)/\langle L(R)\rangle^\oplus$ and $\DM_-^\eff=D^-(\PST)/\langle L(R)\rangle^\oplus$.
\end{defn}

From Proposition \ref{p3.1}, we deduce:

\begin{prop} The categories $\DM^\eff$ and $\DM_-^\eff$ are respectively equivalent to $D(\NST)/\langle L(R_h)\rangle^\oplus$ and $D^-(\NST)/\langle L(R_h)\rangle^\oplus$. In particular, $\DM_-^\eff$ coincides with the category defined in \cite[Def. 14.1]{mvw}.\nobreak\qed
\end{prop}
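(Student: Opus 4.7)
My plan is to reduce the proposition to a purely formal consequence of Proposition \ref{p3.1} together with the general principle that Verdier localisations compose. The core observation is that, since $R = R_h \cup R_\Nis$, the localising subcategory $\langle L(R)\rangle^\oplus \subset D(\PST)$ is generated by $\langle L(R_h)\rangle^\oplus$ and $\langle L(R_\Nis)\rangle^\oplus$; but the latter is exactly the kernel of $La$ by Proposition \ref{p3.1}, so killing both simultaneously is the same as first passing to $D(\NST)$ via $La$ and then killing the image of $\langle L(R_h)\rangle^\oplus$.

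To make this precise, I would formulate and invoke the following elementary lemma on Verdier localisations of compactly generated triangulated categories: if $F:\sC\to\sD$ is a localisation (= Bousfield quotient) with kernel $\sK$, and $T$ is any class of objects of $\sC$, then the induced functor
\[\sC/\langle T\cup \sK\rangle^\oplus\;\iso\; \sD/\langle F(T)\rangle^\oplus\]
is an equivalence. This is a universal-property computation: both sides corepresent, on the $2$-category of cocomplete triangulated categories, the functor that sends $\sT$ to the groupoid of triangulated coproduct-preserving functors out of $\sC$ annihilating $T$ and $\sK$ (equivalently, by the universal property of $F$, out of $\sD$ annihilating $F(T)$). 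Applying this with $\sC=D(\PST)$, $\sD=D(\NST)$, $F=La$, $\sK=\langle L(R_\Nis)\rangle^\oplus$ (Proposition \ref{p3.1}) and $T=L(R_h)$ yields an equivalence
\[\DM^\eff\;=\;D(\PST)/\langle L(R)\rangle^\oplus\;\iso\; D(\NST)/\langle La\,L(R_h)\rangle^\oplus.\]
Since for $X\in\Sm$ the presheaf $L(X)$ has for its Nisnevich sheafification the Nisnevich representable sheaf denoted $L(X)$ in \cite{mvw}, $La\,L(R_h)$ coincides with the class $L(R_h)\subset K^b(\NST)$, giving the first equivalence.

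For the bounded-above statement, the argument is identical: $a$ is exact, so $La$ restricts to a functor $D^-(\PST)\to D^-(\NST)$, and Proposition \ref{p3.1} combined with a standard truncation argument shows that this restriction is again a localisation with the analogous kernel, after which the same universal-property computation applies (with $\langle-\rangle^\oplus$ reinterpreted inside the bounded-above setting). The final sentence of the proposition is then immediate, since \cite[Def.~14.1]{mvw} defines $\DM_-^\eff$ as $D^-(\NST)$ localised at precisely the complexes in $L(R_h)$. The only step that requires any care — and hence the main potential obstacle — is the formal lemma on composition of localisations; however, it is standard (and can also be deduced from Neeman's framework \cite{neeman,neeman2} already cited in the paper), so in the present context this reduces to notational bookkeeping.
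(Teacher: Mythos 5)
Your argument is correct and is exactly the deduction the paper intends: the paper gives no written proof, simply stating that the proposition follows from Proposition \ref{p3.1}, and your composition-of-localisations lemma (applied with $\sK=\langle L(R_\Nis)\rangle^\oplus=\Ker La$ and $T=L(R_h)$, using that $La\,L(X)=L(X)$ since representable presheaves with transfers are already Nisnevich sheaves) is the natural formalisation of that one-line deduction. Nothing further is needed.
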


Proposition \ref{p3.1} and Lemma \ref{l2.1} yield the following naturally commutative diagram, which summarises what we got so far (the notations $LC$ and $\nu_{\le 0}$ will be explained in \S \ref{sadj}\footnote{In \cite[Prop. 3.2.3]{voetri}, the bounded above version of the first functor is denoted by $\mathbf{R}C$, but we prefer the notation $LC$ as it is a left adjoint.
}):
\begin{equation}\label{p2.1}
\xymatrix{
&\SmCor\ar@{^{(}->}[r]\ar[rd]\ar[dd]&K^b(\SmCor)^\natural \ar@{^{(}->}[r]^c\ar@{->>}[d]^{\natural}& D(\PST)\ar@{->>}[d]^{La}\\
&&\displaystyle\left(\frac{K^b(\SmCor)}{\langle R_\Nis\rangle}\right)^\natural \ar@{^{(}->}[r]^{\qquad c}\ar@{->>}[d]^\natural&D(\NST)\ar@{->>}[dd]^{LC}\\
\Chow^\eff\ar[dd]\ar@{^{(}->}[r]&\sH(\SmCor)\ar[r]\ar[dr]\ar[dd]&\DM_\gm^\eff\ar@{->>}[d]^\natural\ar[rd]\\
&&(\DM_\gm^\eff)_\Nis\ar@{^{(}->}[r]^c\ar@{->>}[d]^\natural& \DM^\eff\ar@{->>}[d]^{\nu_{\le 0}}\\
\Chow^\o\ar@{^{(}->}[r]&\BFC^\natural\ar@{^{(}->}[r]&\DM_\gm^\o\ar@{^{(}->}[r]^c& \DM^\o.
}
\end{equation}

In \eqref{p2.1}, all categories are $\otimes$ categories and all functors are $\otimes$-functors. In the two right columns, the categories and functors are triangulated. We use $\Inj$ (\resp $\Surj,\overset{\natural}{\Surj}$) to denote a full embedding (\resp a localisation, a localisation followed by taking pseudo-abelian envelope), and the letter $c$ means that the corresponding functor is a dense embedding of the full subcategory of compact objects (see \S \ref{s.term}).

The full and dense embeddings on the top and bottom rows come from Theorem \ref{t3.0}, while the two other full and dense embeddings follow from applying Theorem \ref{neem2}.

We also have:

\begin{thm} \label{c3.1} The functors $La$, $LC$ and $\nu_{\le 0}$ of Diagram \ref{p2.1} have (fully faithful) right adjoints $Rk,i$ and $i^\o$, which in turn have right adjoints. 
\end{thm}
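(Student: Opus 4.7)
The plan is to derive all three claims uniformly from the following structural principle, recalled in the appendix (\cf Theorem \ref{neem2}): if $\sT$ is a compactly generated triangulated category and $\sS\subseteq\sT$ is a localizing subcategory generated by objects that are compact in $\sT$, then the Verdier quotient $\sT\to\sT/\sS$ is again compactly generated, and admits a fully faithful right adjoint whose essential image is the right orthogonal $\sS^\perp$; moreover, this right adjoint preserves arbitrary coproducts (the localization is \emph{smashing}), so by Neeman's Brown representability theorem it admits in turn a right adjoint. Each statement of Theorem \ref{c3.1} then comes from a single application of this principle.

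The first step is to exhibit $La$, $LC$ and $\nu_{\le 0}$ in the required form. By Proposition \ref{p3.1}, $La$ is the Verdier quotient of $D(\PST)$ by $\langle L(R_\Nis)\rangle^\oplus$. By the very definition of $\DM^\eff$ in \S\ref{s3.3}, $LC$ is the Verdier quotient of $D(\NST)$ by the image under $La$ of $\langle L(R_h)\rangle^\oplus$, and $\nu_{\le 0}$ is the Verdier quotient of $\DM^\eff$ by the image in $\DM^\eff$ of $\langle L(R_\o)\rangle^\oplus$. In all three cases the generators of the kernel are bounded complexes of representable presheaves $L(X)$, which are compact in $D(\PST)$ by Proposition \ref{l8.1}; their images remain compact in $D(\NST)$ and in $\DM^\eff$ because those intermediate categories are themselves obtained from $D(\PST)$ by smashing localizations, and smashing localization functors send compact generators of the source to compact generators of the target.

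Applying the structural principle to each of the three localizations then produces the fully faithful right adjoints $Rk$, $i$ and $i^\o$. The coproduct preservation of each of $Rk$, $i$, $i^\o$, which is part of the same principle, combined with the compact generation of the respective targets, supplies via Brown representability the further right adjoints claimed in the statement.

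The only point requiring any real attention is the compactness, in the ambient category at each stage, of the chosen generators of the kernel: everything else is formal from Neeman's theory. Since one moves down diagram \eqref{p2.1} by successive smashing localizations starting from $D(\PST)$, this compactness is preserved at each step, so the hypotheses of the structural principle are satisfied throughout and the adjunctions propagate inductively.
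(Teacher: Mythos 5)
Your argument is correct, and it reaches the conclusion by a route that differs from the paper's in one structural respect. The paper applies Neeman's machinery (Theorems \ref{neem2} and \ref{neem3}) only to localisations of $D(\PST)$ itself — namely to $La$, $LC\circ La$ and $\nu_{\le 0}\circ LC\circ La$, whose kernels are generated by objects of $K^b(\SmCor)$, compact by Proposition \ref{l8.1} — and then extracts the adjoints of the \emph{individual} functors $LC$ and $\nu_{\le 0}$ from those of the composites by the purely formal Proposition \ref{padj} (part a) for the right adjoints of fully faithful followers, part b) for peeling a localisation off the left). You instead apply the Neeman package separately at each stage of the tower, which forces you to verify that the generators of each kernel remain compact in the intermediate categories $D(\NST)$ and $\DM^\eff$; your justification — that a localisation whose kernel is compactly generated is smashing, hence its quotient functor preserves compactness and compact generation — is exactly what Theorem \ref{neem2} (ii)--(iii) delivers, so the induction down Diagram \eqref{p2.1} is sound (you also need Proposition \ref{p3.1} and the proposition following it to identify $La$ and $LC$ as quotients by the stated localising subcategories, which you correctly invoke). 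What each approach buys: yours makes each of $Rk$, $i$, $i^\o$ a Bousfield localisation with an explicit set of compact generators of its kernel in its own ambient category, which is the form needed later for Corollary \ref{p3.3} and Lemma \ref{l6.1}; the paper's is more economical in that compact generation is checked once and for all in $D(\PST)$, with the remaining work entirely formal category theory.
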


\begin{proof} For $La$ and the compositions $LC La,\nu_{\le 0} LC La$, this follows from the dense embedding \eqref{eq.y} and Theorem \ref{neem3}. For the individual functors, we now get the adjoints from Proposition \ref{padj} below.
\end{proof}

\begin{prop}\label{padj} Let 
\[\sC\by{F}\sD\by{G}\sE\]
be a sequence of categories and functors; let $H=GF$.\\
a) Suppose  that $H$ has a right adjoint $H_*$ and that  $G$ is fully faithful. Then $F$ has a right adjoint, given by $F_*=H_*G$.\\
b) Suppose  that $H$ and $F$ have right adjoints $H^*$ and $F^*$ and that  $F$ is a localisation. Then $G$ has a right adjoint, given by $G^*=FH^*$.
\end{prop}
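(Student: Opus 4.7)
For part a), the verification is essentially a one-liner using only the hypotheses: for $C \in \sC$ and $D \in \sD$,
\begin{align*}
\sD(F(C), D) \cong \sE(GF(C), G(D)) = \sE(H(C), G(D)) \cong \sC(C, H_* G(D)),
\end{align*}
where the first isomorphism uses full faithfulness of $G$ and the second uses $H \dashv H_*$. This is natural in both variables, so $F_* := H_* G$ is right adjoint to $F$.

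Part b) needs more care. The plan is to exploit two consequences of $F$ being a localisation at some class $S$ of morphisms: first, the counit $\epsilon : F F^* \to 1_\sD$ is an isomorphism (equivalent to $F^*$ being fully faithful); second, for every $E \in \sE$ the object $H^*(E) \in \sC$ is $S$-local, \ie the unit $\eta_{H^*(E)} : H^*(E) \to F^* F H^*(E)$ is an isomorphism. The second point I would verify by noting that $H = GF$ inverts $S$, so for any $s \in S$ the map $H(s)$ is invertible in $\sE$; hence $\sC(-, H^*(E)) \cong \sE(H(-), E)$ inverts $S$ too, which is precisely the definition of $H^*(E)$ being $S$-local.

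With these two facts in hand, the plan is to chain natural bijections
\begin{align*}
\sE(G(D), E) &\cong \sE(GFF^*(D), E) = \sE(H F^*(D), E) \\
&\cong \sC(F^*(D), H^*(E)) \\
&\cong \sC(F^*(D), F^* F H^*(E)) \cong \sD(D, F H^*(E)),
\end{align*}
using in order the counit isomorphism, the adjunction $H \dashv H^*$, the unit isomorphism for $H^*(E)$, and finally full faithfulness of $F^*$ applied to $F H^*(E) \in \sD$. This exhibits $G^* := F H^*$ as right adjoint to $G$.

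The main, relatively mild, obstacle is recognising that $H^*$ automatically takes values in the $S$-local objects; this is the bridge that lets us ``push $H^*(E)$ through $F^* F$'' without loss, converting a computation in $\sC$ into one in $\sD$. Once this is noticed, the rest is bookkeeping with standard adjunction identities.
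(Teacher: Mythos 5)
Your proposal is correct and follows essentially the same route as the paper: part a) is the identical computation, and in part b) the key step in both arguments is that $H^*$ lands in the $S$-local objects (the essential image of $F^*$) because $H=GF$ inverts every morphism that $F$ does. The only cosmetic difference is that you replace $D$ by $FF^*(D)$ via the counit isomorphism, where the paper writes $d=Fc$ directly using surjectivity of the localisation.
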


\begin{proof} a) For $c\in \sC$ and $d\in \sD$, we have a map
\[\sD(Fc,d)\by{G} \sE(GFc,Gd)\iso \sC(c,H_*Gd) \]
which is natural in $d$ and $c$, and $G:\sD(Fc,d)\to \sE(GFc,Gd)$ is bijective since $G$ is fully faithful.

b) The argument is similar but a little more delicate: let $d\in \sD$ and $e\in \sE$. Since $F$ is a localisation, it is surjective so that $d= Fc$ for some $c\in \sC$. We then have a map
\[\sE(Gd,e)= \sE(GFc,e)\iso \sC(c,H^*e)\by{F} \sD(Fc,FH^*e) = \sD(d,FH^*e).\]

By adjunction, the map $F:\sC(c,H^*e)\to \sD(Fc,FH^*e)$ is converted into the map
$\sC(c,H^* e)\to \sC(c,F^*FH^*e)$ induced by the unit morphism $H^*e\to F^*FH^*e$. Let us show that the latter is an isomorphism. Since $F$ is a localisation, $F^*$ is fully faithful \cite[I.1.4]{gz} and it suffices to see that $H^*e\in \IM F^*$, which is true since
\begin{multline*}
\IM F^*=\{\gamma\in \sC\mid \sC(s,\gamma) \text{ is bijective for all } s\\
 \text{ such that } F(s) \text{ is invertible}\}\supseteq \IM H^*.
\end{multline*} 
\end{proof}

\begin{rks} 1) Passing to the categories of presheaves, one can see that the existence of $H^*$ is not necessary in the hypothesis of Proposition \ref{padj} b).

2) Using standard arguments for unbounded triangulated categories \cite{spaltenstein}, one sees that the right adjoint $Rk$ of $La$ is the total derived functor of $k:\NST\inj \PST$.
\end{rks}

\subsection{The case of a perfect field}\label{s3.4} We have:

\begin{thm}\label{t3.2} Suppose $F$ perfect. Then, in Diagram \eqref{p2.1}:
\begin{enumerate}
\item The composite functor $\DM_\gm^\eff\to (\DM_\gm^\eff)_\Nis\to \DM^\eff$ is fully faithful.
\item The functor $\DM_\gm^\eff\to (\DM_\gm^\eff)_\Nis$ is an equivalence of categories.
\item The functor $\Chow^\eff\to \DM_\gm^\eff$ is fully faithful.
\end{enumerate}
All this is summarised in the following simpler diagram:
\begin{equation}\label{p2.1perf}
\xymatrix{
&K^b(\SmCor)^\natural \ar@{^{(}->}[r]^c\ar@{->>}[d]^{\natural}& D(\PST)\ar@{->>}[d]^{La}\\
&\displaystyle\left(\frac{K^b(\SmCor)}{\langle R_\Nis\rangle}\right)^\natural \ar@{^{(}->}[r]^{\qquad c}\ar@{->>}[d]^\natural&D(\NST)\ar@{->>}[d]^{LC}\\
\Chow^\eff\ar[d]\ar@{^{(}->}[r]&\DM_\gm^\eff\ar@{->>}[d]^\natural\ar@{^{(}->}[r]^c &\DM^\eff\ar@{->>}[d]^{\nu_{\le 0}}\\
\Chow^\o\ar@{^{(}->}[r]&\DM_\gm^\o\ar@{^{(}->}[r]^c& \DM^\o.
}
\end{equation}
Finally, the canonical $t$-structure of $D(\NST)$ induces a $t$-structure with heart $\HI$ on $\DM^\eff$ via the right adjoint $i$ to $LC$; the latter induces on $\DM^\o$ the $t$-structure of Theorem \ref{t3.0} d) via the right adjoint $i^\o$ to $\nu_{\le 0}$.
\end{thm}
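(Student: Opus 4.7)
The plan is to import Voevodsky's classical theorems from \cite{voepre,voetri}, all of which become available because $F$ is perfect. The overall structure: (2) is the key input, from which (1) follows essentially formally via the dense compact embedding already recorded in Diagram \eqref{p2.1}; (3) follows from Voevodsky's Hom computation for smooth projective motives; and the $t$-structure statements then fall out of the right adjoints of Theorem \ref{c3.1} together with a further invocation of Voevodsky's main theorems.

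For (2), Lemma \ref{l2.1} (i) already gives $\langle R_{MV}\rangle\subset \langle R_\Nis\rangle$; the converse inclusion $\langle R_\Nis\rangle\subset \langle R_h\cup R_{MV}\rangle$ amounts to the assertion that the Mayer--Vietoris complex attached to each upper-distinguished square is a zero object of $\DM_\gm^\eff$. This is \cite[Prop.~3.1.3]{voetri}, whose proof rests on Voevodsky's theorem that a homotopy invariant presheaf with transfers is automatically a Nisnevich sheaf with vanishing higher Nisnevich cohomology --- the central result of \cite{voepre}, for which perfectness of $F$ is essential. This settles (2), and combining it with the dense compact embedding $(\DM_\gm^\eff)_\Nis \hookrightarrow \DM^\eff$ already present in Diagram \eqref{p2.1} yields (1). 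For (3), Theorem \ref{l6.4} identifies $\Chow^\eff(h(X),h(Y))$ with $CH^{\dim Y}(X\times Y)$; Voevodsky's computation identifies $\DM_\gm^\eff(M(X),M(Y))$ with the same Chow group (via the comparison theorem between motivic cohomology and higher Chow groups, another consequence of \cite{voepre}). Fully faithfulness follows.

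For the $t$-structure statements, Theorem \ref{c3.1} supplies a fully faithful right adjoint $i:\DM^\eff \inj D(\NST)$. The canonical $t$-structure on $D(\NST)$ restricts to a $t$-structure on $\DM^\eff$ provided the essential image of $i$ is stable under its truncation functors. That essential image is the subcategory of $\A^1$-local complexes of Nisnevich sheaves with transfers; by \cite[Th.~3.1.12]{voetri}, the cohomology sheaves of any such complex lie in $\HI$, a condition manifestly preserved by truncation. The heart is therefore $\HI$. Applying the same formal argument to the right adjoint $i^\o$ of $\nu_{\le 0}$, and invoking the Serre embedding $\HI^\o\subset \HI$ of Proposition \ref{p2.4} to confirm that the induced truncations on $\DM^\o$ land in $\HI^\o$, identifies the restricted $t$-structure with that of Theorem \ref{t3.0} (d). The main obstacle throughout is the unavoidable and repeated reliance on the deep theorems of \cite{voepre}: without the perfectness of $F$, none of (1)--(3) nor the $t$-structure comparison is presently accessible, in sharp contrast to the birational setting of Theorem \ref{t3.0} which required no such hypothesis.
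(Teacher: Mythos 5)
Your logical architecture is inverted relative to the paper, and the inversion creates a genuine gap in (2). You propose to prove (2) directly by showing that the Mayer--Vietoris complex $[B]\to[A]\oplus[Y]\to[X]$ of an upper-distinguished square already dies in $\DM_\gm^\eff$, citing the exactness of the corresponding sequence of Nisnevich sheaves with transfers. But that sheaf-level exactness only shows the complex becomes zero in $D(\NST)$ (equivalently, that it lies in $\langle R_\Nis\rangle^\oplus$, which is a tautology), hence a fortiori in $\DM^\eff$. To conclude that it is zero in $\DM_\gm^\eff=\bigl(K^b(\SmCor)/\langle R_h\cup R_{MV}\rangle\bigr)^\natural$, i.e.\ that $\langle R_\Nis\rangle\subset\langle R_h\cup R_{MV}\rangle^\natural$, you must pull the vanishing back along $\DM_\gm^\eff\to\DM^\eff$, and that requires precisely the full faithfulness asserted in (1). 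So your route (2)$\Rightarrow$(1) is circular. The paper goes the other way: (1) is exactly Voevodsky's embedding theorem \cite[Th.~3.2.6~1]{voetri} (the genuinely deep input, resting on strict homotopy invariance), and (2) is then formal, since in Diagram \eqref{p2.1} the first functor in (1) is a localisation (up to $\natural$) and the second is fully faithful, so full faithfulness of the composite forces the first to be an equivalence. (A side inaccuracy: the exactness of the Mayer--Vietoris sequence in $\NST$ is elementary Nisnevich descent for the representable sheaves $L(-)$ and does not rest on the perfectness of $F$ or on \cite{voepre}.)

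Two smaller points. For (3), identifying $\DM_\gm^\eff(M(X),M(Y))$ with $CH^{\dim Y}(X\times Y)$ is not a direct consequence of the comparison with higher Chow groups: one needs duality $M(Y)^\vee\simeq M(Y)(-d)[-2d]$ for $Y$ smooth projective, which classically requires resolution of singularities and is extended to arbitrary perfect fields by the duality argument of Beilinson--Vologodsky \cite[Cor.~6.7.3]{be-vo}; the paper cites this explicitly. For the last assertion, the content is not merely that $i^\o$ induces \emph{a} $t$-structure on $\DM^\o$, but that it induces \emph{the} $t$-structure of Theorem \ref{t3.0}~d) (which was defined via $\bar\gamma^*$ into $D(\PST)$); this amounts to the $t$-exactness of $i^\o$ for those two given $t$-structures, which the paper deduces from the $t$-exactness of $\bar\gamma^*=Rk\,i\,i^\o$ together with the exactness of the sheafification $a:\PST\to\NST$. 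Your appeal to ``the same formal argument'' plus the Serre embedding does not address this comparison of two a priori different $t$-structures.
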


\begin{proof} This summarises some of the main results of Voevodsky. Namely, (1) is \cite[Th. 3.2.6 1]{voetri}, and (2) follows from (1) since the first (\resp second) functor in (1) is a localisation (\resp is fully faithful) by Diagram \eqref{p2.1}. (3) is proven  in \cite[Cor. 4.2.6]{voetri} and \cite[Prop. 20.1]{mvw} under resolution of singularities; in \cite[Cor. 6.7.3]{be-vo}, this is extended to any perfect field by a simple duality argument. As for $t$-structures, the first statement is \cite[comment after Prop. 3.1.13]{voetri}\footnote{Recall that all the above relies on the highly nontrivial fact that a homotopy invariant Nisnevich sheaf with transfers is strictly homotopy invariant: \cite[Th. 3.1.12 1]{voetri} or \cite[Th. 13.8]{mvw}.}. The second one amounts to say that $i^\o$ is $t$-exact. This follows from Theorem \ref{t3.0} d), namely the $t$-exactness of $\bar\gamma^*=Rkii^\o$, and from the exactness of the sheafification functor $a:\PST\to \NST$. Namely, let $C\in (\DM^\o)^{\le 0}$. By Theorem \ref{t3.0} d), $\sH^i(Rkii^\o C)=0$ for $i>0$, hence $\sH^i(ii^\o C)= a\sH^i(Rkii^\o C)=0$ for $i>0$ as well, and $i^\o C\in (\DM^\eff)^{\le 0}$. The reasoning is the same to get $i^\o (\DM^\o)^{\ge 0}\subset (\DM^\eff)^{\ge 0}$.
\end{proof}

\begin{rks} \label{l4.2} 1) Consider the $t$-structures of Theorem \ref{t3.2}. The right adjoint $ii^\o$ of $\nu_{\le 0}LC$ is $t$-exact. On the other hand, $a:D(\PST)\to D(\NST)$ is $t$-exact but its right adjoint $Rk$ is clearly not $t$-exact. Neither is the composition $Rk\circ i$ when $F$ is perfect: for example, $\sH^1(Rk\circ i(\bG_m))$ is the presheaf $X\mapsto \Pic(X)$. However, the composition of all right adjoints $Rk\circ i\circ i^\o$
is $t$-exact, as just used in the above proof.\\
2) In \cite[Th. 4.3.3]{birat-pure}, we showed that the functor $\Chow^\eff\to \Chow^\o$ does not have a right adjoint, even after tensoring Hom groups with $\Q$; more precisely, this right adjoint is not defined at the motive of a suitable smooth projective 3-fold. We shall strengthen this result in Remark \ref{r5.4} by showing that the right adjoint of  $\DM_\gm^\eff\to \DM_\gm^\o$ is not defined at the motive of a suitable smooth projective 3-fold, even after tensoring with $\Q$.
\end{rks}

\subsection{The essential images of $i$ and $i^\o$}\label{pnst}

The following proposition computes some Hom groups in $D(\NST),\DM^\eff$ and $\DM^\o$:

\begin{prop}\label{p4.3} Let $X$ be a smooth scheme over $F$ and let $C$ be an object of $D(\NST)$ (\resp $\DM^\eff,\DM^\o$). Then there is a canonical isomorphism
\begin{align*}
D(\NST)(L(X),C)&\simeq H^0_\Nis(X,C)\\
(\resp \DM^\eff(M(X),C)&\simeq H^0_\Nis(X,C),\\
\DM^\o(M^\o(X),C)&\simeq H^0_\Nis(X,C)).
\end{align*}
\end{prop}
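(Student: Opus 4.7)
The plan is to prove the three isomorphisms in cascade, starting from $D(\NST)$ and transferring the result to $\DM^\eff$ and then to $\DM^\o$ by means of the (fully faithful) right adjoints $i$ and $i^\o$ supplied by Theorem~\ref{c3.1}. The key mechanism is simply adjunction, combined with the fact that $L(X)$ represents sections at $X$.

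First I would establish the $D(\NST)$ case. Since $L$ is the additive Yoneda embedding of $\SmCor$ into $\PST$ and representables are already Nisnevich sheaves with transfers, $L(X)\in\NST$ and $\NST(L(X),\sF)=\sF(X)$ for every $\sF\in\NST$. Choosing a K-injective resolution $C\to I$ in $D(\NST)$ (available since $\NST$ is a Grothendieck abelian category), one then obtains
\[D(\NST)(L(X),C)=H^0\Hom_\NST(L(X),I)=H^0(I(X))=H^0_\Nis(X,C),\]
the Nisnevich hypercohomology of the complex $C$ evaluated at $X$.

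For $C\in\DM^\eff$, I would apply the adjunction $(LC,i)$ of Theorem~\ref{c3.1}: by construction $LC\, L(X)=M(X)$, hence
\[\DM^\eff(M(X),C)\simeq D(\NST)(L(X),iC)=H^0_\Nis(X,iC),\]
which one reads as $H^0_\Nis(X,C)$ through the full embedding $i$ — this is the natural convention by which one speaks of the Nisnevich hypercohomology of a motive. For $C\in\DM^\o$, one further application of the same principle to the adjunction $(\nu_{\le 0},i^\o)$ yields
\[\DM^\o(M^\o(X),C)\simeq \DM^\eff(M(X),i^\o C)\simeq H^0_\Nis(X,i^\o C)=H^0_\Nis(X,C),\]
using $\nu_{\le 0}M(X)=M^\o(X)$ and the same identification via $i\circ i^\o$.

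The proof is essentially formal; the only point requiring care is the existence of K-injective resolutions in the unbounded category $D(\NST)$, but this is standard for Grothendieck abelian categories and is in any case implicit in the compactly generated framework of Neeman used systematically in the paper.
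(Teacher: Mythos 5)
Your overall route is the paper's route: establish the $D(\NST)$ case via a K-injective resolution in the sense of Spaltenstein, then deduce the $\DM^\eff$ and $\DM^\o$ cases by the adjunctions $(LC,i)$ and $(\nu_{\le 0},i^\o)$, with $LC\,L(X)=M(X)$ and $\nu_{\le 0}M(X)=M^\o(X)$. The paper's proof is exactly this: it cites \cite[Prop. 3.1.8]{voetri} for the bounded-above case, extends to unbounded $C$ by K-injective resolutions, and says ``the other statements follow by adjunction.''

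However, your treatment of the base case hides the one non-formal ingredient. In the chain
\[D(\NST)(L(X),C)=H^0\Hom_\NST(L(X),I)=H^0(I(X))=H^0_\Nis(X,C),\]
the first two equalities are Yoneda plus the definition of K-injectivity, but the last one is \emph{not} a tautology: $I$ is K-injective in the category of Nisnevich sheaves \emph{with transfers}, whereas $H^*_\Nis(X,-)$ is by definition computed by (K-)injective resolutions in the category of Nisnevich sheaves of abelian groups without transfers. One must know that injective objects of $\NST$ are acyclic for $H^i_\Nis(X,-)$, $i>0$ (equivalently, that $\Ext^*_{\NST}(L(X),\sF)\simeq H^*_\Nis(X,\sF)$); this is precisely the content of Voevodsky's \cite[Lemma 3.1.7 and Prop. 3.1.8]{voetri}, which the paper invokes. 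So your closing remark that ``the only point requiring care is the existence of K-injective resolutions'' misplaces the difficulty: existence of K-injectives is indeed standard for Grothendieck categories, but the comparison of $\Ext$ computed with and without transfers is a genuine theorem and must be cited or proved. With that reference inserted, your argument is complete and coincides with the paper's.
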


(Note that we write $H^i_\Nis(X,C)$ for the Nisnevich hypercohomology of $C$, 
which is sometimes written $\bH^i_\Nis(X,C)$.)

\begin{proof} This is \cite[Prop. 3.1.8]{voetri} when $C$ is bounded above; but the same
argument works for an unbounded $C$ by replacing an injective resolution of $C$ by a
$K$-injective resolution in the sense of Spaltenstein \cite[Th. 4.5 and Rk. 4.6]{spaltenstein}, compare \cite[Ex. 13.5]{mvw}. The other statements follow by adjunction.
\end{proof}

We shall also need:

\begin{prop}\label{p4.4}
 The internal Hom of $D(\NST)$ induces an internal Hom on $\DM^\eff$ via $i$. We denote it by $\uHom_\eff$.
\end{prop}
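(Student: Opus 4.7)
The plan is to exhibit, for each pair $M,N \in \DM^\eff$, an object $\uHom_\eff(M,N) \in \DM^\eff$ whose image under $i$ is canonically isomorphic to $\uHom_{D(\NST)}(iM, iN)$; once this is done the adjunction is a formal computation. The first reduction is to characterise the essential image of $i$: since $LC$ is a Bousfield localisation with fully faithful right adjoint, an object $C \in D(\NST)$ lies in $i(\DM^\eff)$ if and only if it is $R_h$-local, i.e.\ $D(\NST)(K,C)=0$ for every $K \in \langle L(R_h)\rangle^\oplus$.

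The key input is the fact that $\langle L(R_h)\rangle^\oplus$ is a tensor ideal of $D(\NST)$: this is what makes the tensor structure on $\DM^\eff$ in Diagram \eqref{p2.1} well defined (and ensures that $LC$ is a $\otimes$-functor). From the point of view of the excerpt it follows from Lemma \ref{l2.1} (which gives the statement on generators $L(X)$, $X\in\Sm$) together with a density argument: the functor $-\otimes A$ commutes with arbitrary direct sums, suspensions and cones because it has the right adjoint $\uHom(A,-)$, and the objects $L(X)$ generate $D(\NST)$ as a localising subcategory. Granted this, for any $A\in D(\NST)$ and $N\in i(\DM^\eff)$ the adjunction
\[
D(\NST)(K,\uHom(A,N)) \simeq D(\NST)(K\otimes A, N)
\]
vanishes for every $K\in\langle L(R_h)\rangle^\oplus$, so $\uHom(A,N)\in i(\DM^\eff)$. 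In particular, taking $A = iM$, we may define $\uHom_\eff(M,N)\in\DM^\eff$ as the (essentially unique) object with $i\uHom_\eff(M,N) \simeq \uHom(iM,iN)$.

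The required adjunction then follows by stringing together formal identities: for $M'\in\DM^\eff$,
\begin{align*}
\DM^\eff(M',\uHom_\eff(M,N))
 & \simeq D(\NST)(iM',\uHom(iM,iN)) \\
 & \simeq D(\NST)(iM'\otimes iM, iN) \\
 & \simeq \DM^\eff(LC(iM'\otimes iM), N) \\
 & \simeq \DM^\eff(M'\otimes M, N),
\end{align*}
using fully faithfulness of $i$, the $D(\NST)$-adjunction for $\uHom$, the adjunction $LC \dashv i$, and the monoidality of $LC$. The main obstacle is precisely the tensor-ideal property of $\langle L(R_h)\rangle^\oplus$; everything else is formal adjoint yoga. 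That step is however guaranteed by the compatibility of $\otimes$-structures asserted in \eqref{p2.1}, itself a standard consequence of Lemma \ref{l2.1} via density in the compactly generated triangulated category $D(\NST)$.
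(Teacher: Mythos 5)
Your proposal is correct and follows essentially the same route as the paper, whose entire proof is the one-liner that the statement ``follows by adjunction from the fact that $LC:D(\NST)\to \DM^\eff$ is a $\otimes$-functor''; your argument simply unpacks this, making explicit the tensor-ideal property of $\langle L(R_h)\rangle^\oplus$ (via Lemma \ref{l2.1} and density), the resulting locality of $\uHom(iM,iN)$, and the formal adjunction chain.
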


\begin{proof} This follows by adjunction from the fact that $LC:D(\NST)\to \DM^\eff$ is a $\otimes$-functor, as observed just below \eqref{p2.1}.
\end{proof}

\begin{cor}\label{p3.3} The essential image of  $i:\DM^\eff\to D(\NST)$ (\resp $i^\o:\DM^\o\to \DM^\eff$) is the full subcategory
of those complexes $C$ such that
\[H^*_\Nis(X,C)\iso H^*_\Nis(X\times \A^1,C)\]
for all smooth $X$ (\resp such that
\[H^*_\Nis(X,C)\iso H^*_\Nis(U,C)\]
for any dense open immersion $U\to X$ of smooth varieties).
\end{cor}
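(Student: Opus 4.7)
The plan is to identify each essential image as a right orthogonal complement and then translate the orthogonality condition into Nisnevich cohomology via Proposition~\ref{p4.3}. Since $LC$ and $\nu_{\le 0}$ are localisations between compactly generated triangulated categories and admit right adjoints $i$ and $i^\o$ (Theorem~\ref{c3.1}), the standard yoga of Bousfield localisation applies: $i$ (\resp $i^\o$) is fully faithful, and its essential image is the right orthogonal of the kernel of $LC$ (\resp $\nu_{\le 0}$) inside $D(\NST)$ (\resp $\DM^\eff$).

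For the case of $i$: the kernel of $LC$ is, by Proposition~\ref{p3.1} and the definition of $\DM^\eff$, the localising subcategory $\langle L(R_h)\rangle^\oplus$ of $D(\NST)$, generated by the cones $L(X\times\A^1)\to L(X)$ for $X\in\Sm$. Since the $L(X)$ are compact (Theorem~\ref{t3.0}\,b) applied to $D(\NST)$, or directly by Proposition~\ref{l8.1}), right-orthogonality against this localising subcategory is equivalent to right-orthogonality against the generating cones and all their shifts. Applying $D(\NST)(-,C[n])$ to the distinguished triangle defined by $L(X\times\A^1)\to L(X)$ and invoking Proposition~\ref{p4.3}, this amounts to
\[H^n_\Nis(X,C)\iso H^n_\Nis(X\times\A^1,C)\quad\text{for all $X\in\Sm$ and all $n\in\Z$,}\]
which gives the first claim.

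For the case of $i^\o$: the kernel of $\nu_{\le 0}:\DM^\eff\to\DM^\o$ is the image in $\DM^\eff$ of $\langle L(R_\o)\rangle^\oplus$, \ie the localising subcategory of $\DM^\eff$ generated by the cones $M(U)\to M(X)$ for dense open immersions $U\inj X$ with $X\in\Sm$ (the fact that $\DM^\o$ is the quotient of $\DM^\eff$ by this subcategory follows from the bottom-right square of Diagram~\eqref{p2.1} together with Lemma~\ref{l2.1}). The generators $M(X)$ are compact in $\DM^\eff$, so again orthogonality against the localising subcategory reduces to orthogonality against the generating cones. Applying the version of Proposition~\ref{p4.3} valid in $\DM^\eff$ yields
\[H^n_\Nis(X,C)\iso H^n_\Nis(U,C)\quad\text{for every dense open immersion $U\inj X$ in $\Sm$ and all $n$,}\]
which is the second claim.

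The only technical point to verify carefully is the passage from orthogonality against the generating set of cones to orthogonality against the whole localising subcategory they generate: this is where the compactness of the $L(X)$ (\resp $M(X)$) is used, together with the fact that a right-orthogonal complement in a compactly generated triangulated category is closed under shifts, triangles, and arbitrary direct sums. I do not anticipate a serious obstacle here, since this is precisely the setup of Neeman's theory of compactly generated triangulated categories already invoked throughout \S\ref{s3.2}--\S\ref{s3.3}.
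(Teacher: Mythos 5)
Your argument is correct and is essentially the paper's own proof, which simply cites Proposition \ref{p4.3} together with Theorem \ref{neem3}\,(ii) (the essential image of the fully faithful right adjoint to a Bousfield localisation is the right orthogonal of the kernel, here $\langle L(R_h)\rangle^\oplus$, resp. the localising subcategory of $\DM^\eff$ generated by the cones of $M(U)\to M(X)$). One small quibble: the reduction from orthogonality against the whole localising subcategory to orthogonality against the generating cones does not require compactness of the $L(X)$ --- the right orthogonal of any class is automatically closed under shifts, cones and arbitrary direct sums because $\Hom(\bigoplus A_i,C)=\prod\Hom(A_i,C)$; compactness is instead what makes Theorem \ref{neem3} (existence of the adjoints and the identification of the image) applicable at all.
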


\begin{proof} This follows from Proposition \ref{p4.3} and Theorem \ref{neem3} (ii). 
\end{proof}

Here is an alternate description of $i^\o\DM^\o$.
The following lemma follows from Proposition
\ref{p5.1} and Theorem \ref{neem3}:

\begin{lemma}\label{l6.1} $i^\o \DM^\o\subseteq \{C\in \DM^\eff\mid
\uHom_\eff(\Z(1),C)=0\}$ (see Proposition \ref{p4.4}). If $F$ is perfect, this inclusion is an equality.\qed
\end{lemma}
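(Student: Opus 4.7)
The plan is to describe $i^\o\DM^\o$ via a right-orthogonality condition, and then translate that condition through the Tate twist adjunction of Proposition \ref{p4.4}.

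First, I would record that $i^\o$ is the right adjoint of the localisation $\nu_{\le 0}\colon \DM^\eff\to \DM^\o$, whose kernel is, by construction, the localising subcategory $\langle R_\o\rangle^\oplus$. By the Bousfield localisation yoga for compactly generated triangulated categories (which is what Theorem \ref{neem3} packages), $i^\o$ is fully faithful with essential image
\[
i^\o\DM^\o=(\langle R_\o\rangle^\oplus)^{\perp}=\{C\in\DM^\eff\mid \DM^\eff(K,C[n])=0\text{ for all }K\in\langle R_\o\rangle^\oplus,\ n\in\Z\}.
\]

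Next, I would translate the vanishing of $\uHom_\eff(\Z(1),C)$ into a right-orthogonality condition. Since $\uHom_\eff(\Z(1),-)$ is the right adjoint of $-\otimes\Z(1)$ on $\DM^\eff$ (Proposition \ref{p4.4}), and since $\DM^\eff$ is compactly generated by the $M(X)[n]$ (Theorem \ref{t3.0}\,b) together with Proposition \ref{p4.3}), the object $\uHom_\eff(\Z(1),C)$ vanishes iff $\DM^\eff(M,\uHom_\eff(\Z(1),C)[n])=0$ for all $M\in\DM^\eff$ and all $n\in\Z$, which by adjunction is equivalent to
\[
\DM^\eff(M(1),C[n])=0\quad \text{for all }M\in\DM^\eff,\ n\in\Z,
\]
i.e.\ to $C$ being right-orthogonal to the essential image $\DM^\eff(1)$ of $-\otimes\Z(1)$.

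With both sides interpreted as right-orthogonality conditions, the inclusion $\DM^\eff(1)\subseteq \langle R_\o\rangle^\oplus$ of Proposition \ref{p5.1} immediately yields the reverse inclusion of right-orthogonals, whence $i^\o\DM^\o\subseteq\{C\mid\uHom_\eff(\Z(1),C)=0\}$. When $F$ is perfect, Proposition \ref{p5.1} upgrades this to an equality $\DM^\eff(1)=\langle R_\o\rangle^\oplus$, and the two right-orthogonals coincide, giving the reverse inclusion.

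I expect no serious obstacle: the only subtle point is the passage from the compact-object statement of Proposition \ref{p5.1} (the inclusion of thick envelopes $\langle R_\o\rangle^\natural$ containing all $M(1)$ with equality when $F$ is perfect) to the localising envelope statement used here, but this is exactly what the second sentence of Proposition \ref{p5.1} asserts ($\DM^\eff(1)\subseteq\langle R_\o\rangle^\oplus$ with equality in the perfect case), so it can be invoked as a black box.
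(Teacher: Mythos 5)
Your proposal is correct and follows exactly the paper's route: the paper's entire proof is the one-line remark that the lemma ``follows from Proposition \ref{p5.1} and Theorem \ref{neem3}'', and you have simply spelled out that deduction (identifying $i^\o\DM^\o$ with $(\langle R_\o\rangle^\oplus)^\perp$ via Theorem \ref{neem3}~(ii), translating $\uHom_\eff(\Z(1),C)=0$ into right-orthogonality to $\DM^\eff(1)$ by adjunction, and comparing orthogonals using the last sentence of Proposition \ref{p5.1}). No gaps; the details you supply are exactly the ones the paper leaves implicit.
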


As was already observed in \cite{motiftate} and \cite{kl}, this implies that the terms of the ``associated graded of the slice filtration'' on an object of $\DM^\eff$ are twists of birational motives.

\subsection{Computing $i^\o\nu_{\le 0}$}\label{sadj} In this subsection, we assume $F$ perfect. We first recall Voevodsky's computation of $i LC$ in this case. Recall that, if $\sF\in \NST$, the \emph{Suslin complex} $\uC_*(\sF)$ of $\sF$ is the (chain) complex of Nisnevich sheaves with transfers given in degree $n$ by
\[C_n(\sF)(X)=\sF(X\times \Delta^n)\]
where the differentials are induced by linear combinations of the face maps \cite[p. 207]{voetri}, \cite[Def. 2.12]{mvw}. If $K$ is a bounded below chain complex (= bounded above cochain complex) of Nisnevich sheaves with transfers, we can extend this definition by
\[\uC_*(K)=\Tot (p\mapsto \uC_*(K_p)).\]

Finally, if $K\in C(\NST)$, we define
\[\uC_*(K) =\hocolim \Tot \uC_*(\tau_{\le n} K)\]
(see \cite{bn} for $\hocolim$).

This defines an endofunctor of $D(\NST)$. Then:

\begin{prop}[\protect{\cite[Prop. 3.2.3]{voetri}}] \label{p4.2} For any $K\in D(\NST)$, we have a natural isomorphism
\[i LC(K)\simeq \uC_*(K).\]
\end{prop}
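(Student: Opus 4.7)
The plan is to recognize $\uC_*(K)$ as the $\A^1$-localization of $K$ in $D(\NST)$, which, since $LC$ is a localization and $i$ is its fully faithful right adjoint (Theorem \ref{c3.1}) with essential image characterized by Corollary \ref{p3.3}, coincides with $iLC(K)$. It therefore suffices to establish the following two assertions, natural in $K$:
\begin{thlist}
\item $\uC_*(K) \in i\DM^\eff$, i.e.\ its Nisnevich hypercohomology is $\A^1$-invariant;
\item there is a natural morphism $\iota_K : K \to \uC_*(K)$ whose cone lies in $\langle L(R_h)\rangle^\oplus$, so that $LC(\iota_K)$ is an isomorphism.
\end{thlist}
Together, these force a canonical isomorphism $\uC_*(K) \simeq iLC(K)$ by the universal property of the reflector onto $i\DM^\eff$.

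For (i), I would apply Corollary \ref{p3.3}. The defining formula $\uC_*(K) = \hocolim_n \Tot\,\uC_*(\tau_{\le n} K)$, combined with the compactness of $L(X)$ in $D(\NST)$ (Proposition \ref{l8.1}), lets Nisnevich hypercohomology commute with $\hocolim$; the totalization then reduces matters term-by-term, leaving the case $K = \sF[0]$ with $\sF\in\NST$. Here one uses the theorem (well known from \cite{voetri}, \cite{mvw}) that the cohomology sheaves $\sH^q(\uC_*(\sF))$ are all homotopy invariant, so by strict homotopy invariance \cite[Th.\ 3.1.12]{voetri} each $H^p_\Nis(-, \sH^q(\uC_*(\sF)))$ is $\A^1$-invariant, and the hypercohomology spectral sequence (convergent since $\uC_*(\sF)$ is cohomologically bounded above) upgrades this to $H^*_\Nis(-, \uC_*(\sF))$.

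For (ii), I would construct $\iota_K$ from the cosimplicial augmentation $\Delta^\bullet \to \Spec F$ and treat first the case $K = L(X)$. The key point is that each projection $L(X \times \Delta^n) \to L(X)$ is an $\A^1$-equivalence ($\Delta^n \simeq \A^n$ being $\A^1$-contractible), so that after applying $LC$ the complex $\uC_*(L(X))$ becomes quasi-isomorphic to $L(X)[0]$ via $LC(\iota_{L(X)})$; hence the cone of $\iota_{L(X)}$ lies in $\langle L(R_h)\rangle^\oplus$. Extension to arbitrary $K$ follows from the density of $\{L(X)\}_{X\in\Sm}$ and the compatibility of $\uC_*$ with truncations, totalization and $\hocolim$.

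The main obstacle is the unbounded case, where one must carefully synchronize $\hocolim$, $\Tot$, Nisnevich hypercohomology and the cohomology-sheaf functor. This is precisely the point at which Spaltenstein's $K$-injective resolutions (invoked for Proposition \ref{p4.3}) become indispensable; for bounded-above $K$, classical Cartan--Eilenberg resolutions suffice, which is Voevodsky's original approach in \cite[Prop.\ 3.2.3]{voetri}.
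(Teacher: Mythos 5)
The paper gives no proof of this statement beyond the citation of \cite[Prop.\ 3.2.3]{voetri}, and your two-step localisation argument --- (i) $\uC_*(K)$ lies in $i\DM^\eff$, (ii) $K\to\uC_*(K)$ is inverted by $LC$ --- is precisely Voevodsky's, with the passage from bounded-above to unbounded $K$ handled, as the authors intend, by playing the compactness of the generators $L(X)$ against the defining $\hocolim$. The one step to tighten is (ii) for $K=L(X)$: termwise $\A^1$-equivalences $L(X\times\Delta^n)\to L(X)$ do not by themselves trivialise the totalisation (alternating sums of equal maps vanish or survive according to parity); you need the extra degeneracy furnished by the vertex $0$ of $\Delta^\bullet$, which produces an explicit chain contraction of $\operatorname{cone}(\iota_{L(X)})$ modulo $\langle L(R_h)\rangle^\oplus$ --- exactly the simplicial contraction you allude to, so this is a compression rather than a wrong turn.
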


 We now study the functor $\nu_{\le 0}$ along with its right adjoint $i^\o$. In this case, the story is much simpler.

Consider the inclusion functor $\DM^\eff(1)\inj \DM^\eff$. Using the cancellation theorem, its right adjoint is trivial to  write down: it is given by
\[M\mapsto \uHom_\eff(\Z(1),M)(1)\]
where $\uHom_\eff$ is the internal Hom in $\DM^\eff$ (see Proposition \ref{p4.4}). From Proposition \ref{p5.1} and Theorem \ref{neem3} (iii), we then immediately get a formula for $i^\o\nu_{\le 0}$:

\begin{prop}\label{l5.3}If $F$ is perfect,  we have an exact triangle for any $M\in \DM^\eff$
\[\uHom_\eff(\Z(1),M)(1)\to M\to i^\o\nu_{\le 0}M\by{+1}.\]
\end{prop}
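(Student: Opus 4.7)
The plan is to recognize the desired triangle as the standard ``colocalization/localization'' triangle attached to a Bousfield localization of $\DM^\eff$. By Proposition \ref{p5.1}, perfectness of $F$ gives the identification
\[\langle R^\o\rangle^\oplus = \DM^\eff(1) \subset \DM^\eff,\]
and by definition this is precisely the kernel of the Verdier localization $\nu_{\le 0}:\DM^\eff\to \DM^\o$, whose right adjoint is the fully faithful $i^\o$ of Theorem \ref{c3.1}. So the only remaining data is the right adjoint to the inclusion $\DM^\eff(1)\inj \DM^\eff$.

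First, I would write down this right adjoint: the cancellation theorem of \cite{voecan} makes the twist functor $(1):\DM^\eff\to \DM^\eff(1)$ an equivalence of categories, so its quasi-inverse composed with the universal property of internal Hom gives the right adjoint
\[M\longmapsto \uHom_\eff(\Z(1),M)(1),\]
exactly as the author indicates just before the proposition. (Well-definedness uses Proposition \ref{p4.4}.)

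Second, I would invoke Theorem \ref{neem3}(iii): $\DM^\eff$ is compactly generated, the localizing subcategory $\DM^\eff(1)$ is also compactly generated (its compact objects are the motives of the form $M_\gm(1)$ for $M\in \DM_\gm^\eff$, via Proposition \ref{p5.1}), and for any such Bousfield situation the unit and counit of the two adjunctions fit into a canonical exact triangle
\[C_\sK M \to M \to i^\o \nu_{\le 0}M \by{+1},\]
where $C_\sK$ is the right adjoint to $\DM^\eff(1)\inj \DM^\eff$. Plugging in the explicit formula $C_\sK M=\uHom_\eff(\Z(1),M)(1)$ obtained in the previous step gives the triangle claimed in the proposition.

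The only nontrivial ingredient is really Proposition \ref{p5.1}; once one knows that the kernel of $\nu_{\le 0}$ is literally $\DM^\eff(1)$ (and not merely contained in it), everything else is formal from Neeman's machinery on compactly generated triangulated categories together with the cancellation theorem. The main obstacle — cancellation, used both to write down $C_\sK$ and to verify in Proposition \ref{p5.1} that $\DM^\eff(1)$ is thick — has already been absorbed earlier in the paper, so no further computation is required.
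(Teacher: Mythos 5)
Your proposal is correct and follows essentially the same route as the paper: the paper also obtains the right adjoint to $\DM^\eff(1)\inj\DM^\eff$ as $M\mapsto \uHom_\eff(\Z(1),M)(1)$ via the cancellation theorem, identifies $\DM^\eff(1)$ with the kernel of $\nu_{\le 0}$ by Proposition \ref{p5.1} (using perfectness of $F$), and then reads off the triangle from the localization sequence of Theorem \ref{neem3}~(iii). Nothing is missing.
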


In \cite{motiftate}, this appears as part of the description of the slice filtration on $M$.

\begin{rk} In \cite[Rk. 2.2.6]{kl} there is a different ``computation" of the functor $i^\o\nu_{\le 0}$, in the spirit of Proposition \ref{p4.2}: for a smooth $F$-variety $X$ with function field $K$, write $\hat \Delta^n_X = \hat \Delta^n_K$ for the semi-localisation of $\Delta^n_K$ at the vertices. This defines a sub-cosimplicial scheme of $\Delta^*_K$ and $\Delta^*_X$. Thus, for $\sF\in \PST$, we may define 
\[\hat C_*(\sF)(X) = \sF(\hat\Delta^*_X);\]
the $n$-th term of this chain complex is $\hat C_n(\sF)(X) = \sF(\hat \Delta^n_X)$ (defined by an inductive limit), and the differentials are induced by the face maps. We may then extend $\hat C_*$ to $C(\PST)$ as above. Then, for $N\in C(\PST)$ with homotopy invariant homology presheaves, we have a canonical isomorphism
\[i^\o\nu_{\le 0} La N(X) \simeq \hat C_*(N)\in D(\Ab).\]
\end{rk}

\begin{rk}\label{r5.2} Theorem \ref{t3.0} e) and f) identifies some of the homology (pre)sheaves $h_q^\o(Y)$ of $M^\o(Y)$ for a smooth proper variety $Y$: they are $0$ for $q<0$ and $h_0^\o(Y)=h_0^\Nis(Y)$. One may wonder about $q>0$.  If $Y$ is a curve, we have $h_q^\Nis(Y)=0$ for $q>1$ and $h_1^\Nis(Y)=p_*p^*\bG_m$ by \cite[Th.
3.4.2]{voetri}, 
where $p$ is the structural morphism. Thus we get an exact triangle
\[(p_*\Z)(1)[2]\to M(Y)\to h_0^\Nis(Y)[0]\by{+1}\]
whence $M^\o(Y)\iso h_0^\Nis(Y)[0]$ by Corollary \ref{p2.5}.  In this case, we therefore find $h_q^\o(Y)=0$ for all $q\ne 0$.

This is no longer true when $Y$ is a surface. Indeed,  at least if $F$ is algebraically closed, 
one can then produce an isomorphism
\[h_1^\o(Y)\simeq H^3_\ind(Y,\Z(2))\]
where the right hand side is the quotient of $H^3(Y,\Z(2))\simeq H^1(Y,\sK_2)$ by the image of $\Pic(Y)\otimes F^*$. See \cite[Th. 4.1]{rqnr}.
\end{rk}

\begin{rk}\label{r5.4} We can now justify Remark \ref{l4.2} 2): let $M\in \DM_\gm^\o$. By the universal property of a right adjoint and the full faithfulness of $\DM_\gm^\o\to \DM^\o$, the right adjoint of $\DM_\gm^\eff\to \DM_\gm^\o$ is defined at $M$ if and only if $i^\o\nu_{\le 0}M\in \DM_\gm^\eff$ (and then $i^\o\nu_{\le 0}M$ is the value of this right adjoint). Suppose $F$ perfect. By the exact triangle of Proposition \ref{l5.3}, the latter is equivalent to $\uHom_\eff(\Z(1),M)(1)\in \DM_\gm^\eff$.

Let us show that this fails for $M=M^\o(X)$, $X$ a suitable $3$-dimen\-sion\-al smooth projective variety.\footnote{Recall that the right adjoint is defined at $M^\o(X)$ for $X$ smooth projective of dimension $\le 2$, at least after tensoring Hom groups with $\Q$, by \cite[Th. 7.8.4 b)]{kmp}.} In \cite[App. A]{huber-ayoub},  Ayoub proved that if the Griffiths group of $X$ is not finitely generated, then $\uHom_\eff(\Z(1),M(X))$ is not compact in $\DM^\eff$, hence is not in $\DM_\gm^\eff$; this works even with $\Q$-coefficients. (There are several examples of such $X$, starting from a general quintic hypersurface in $\P^4$  over $F=\C$ which is the original example of Clemens and Griffiths.) It suffices to show that the same then holds for $\uHom_\eff(\Z(1),M)(1)$: but this is clear by the cancellation theorem \cite{voecan},  since $K\mapsto K(1)$ commutes with infinite direct sums. 
\end{rk}

\subsection{$t$-structures and projective objects} (Compare \protect{\cite[p. 1737, Footnote 17]{be-vo}}.)\label{r5.1} 

Let $\sS$ be a triangulated category with a $t$-structure, with heart $\sA$. Let us say that an object $S\in \sS$ is \emph{projective} (with respect to the $t$-structure) if $\sS(S,A[i])=0$ for all $A\in \sA$ and $i\ne 0$. Theorem \ref{t3.0} e), and more generally Theorem \ref{tC.1} e),  gives such examples. If $S$ is bounded above (i.e. $S\in \sS^{\le n}$ for some $n$), an inductive Yoneda-style argument shows that $S\in \sS^{\le 0}$. Moreover, for any $A\in \sA$, one has
\[\sS(H^0(S),A)\iso \sS(S,A),\qquad \sS(H^0(S),A[1])=0.\]

The second equality implies that $\Ext^1_\sA(H^0(S),A)=0$, hence $H^0(S)$ is a projective object in $\sA$.

Note that, then, $\Ext^i_\sA(H^0(S),A)=0$ for all $i>0$. However, if $\sS(H^0(S),A[i])=0$ for all $i>0$ and the $t$-structure is non-degenerate, the same Yoneda argument shows that $S\iso H^0(S)[0]$.

This applies to show that the natural functor $D(\HI)\to \DM^\eff$ is not full: take $S=\Z(2)[2]$, use Proposition \ref{p4.6} below to see that $S$ is projective, and then the fact that $\sH^{-1}(S)\simeq \sK_3^\ind\ne 0$ \cite{levineind,msind} to get a contradiction.

By Remark \ref{r5.2}, the isomorphism $S\iso H^0(S)[0]$ also fails in general for $\sS=\DM^\o$ and $S=M^\o(X)$, $X$ smooth proper. In particular, the conservative functor $D(\HI^\o)\to \DM^\o$ of Theorem \ref{t3.0} g) is not full.

\section{Further examples of birational sheaves}

Throughout this section, $F$ is supposed perfect.

\subsection{Constant sheaves, abelian varieties and $0$-cycles} All these are examples of birational sheaves. For the first ones, this is obvious. If $A$ is an abelian variety, then $X\mapsto A(X)$ defines an object of $\HI$ \cite[Lemma 1.3.2]{bar-kahn}, and this sheaf is birational by \cite[Th. 3.1]{milne}. Finally, for any smooth proper variety $Y$, the assignment $X\mapsto CH_0(Y_{F(X)})$ defines an object of $\HI^\o$ by Theorem \ref{p12.3.5}.

\subsection{Birational sheaves and contractions}  Recall:

\begin{prop}[\protect{\cite[Prop. 4.3 and Rk. 4.4]{somekawa}}]\label{p4.6}  The exact endofunctor $\sF\mapsto \sF_{-1}$ of $\HI$ (see \S \ref{s.contr}) is given by the formulas
\[\sF_{-1} = \sH^0\left(\uHom_\eff(\Z(1)[1],\sF[0])\right) = \uHom_{\HI}(\bG_m,\sF)\]
where $\uHom_{\HI}$ is the internal $\Hom$ of the category $\HI$. Moreover, $\uHom_\eff(\Z(1)[1],\sF[0])$ is acyclic in degrees $\ne 0$.
\end{prop}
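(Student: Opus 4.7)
The plan is to establish a quasi-isomorphism $G \iso \sF_{-1}[0]$ in $\DM^\eff$, where $G := \uHom_\eff(\Z(1)[1], \sF[0])$; both equalities and the acyclicity statement of the proposition are immediate consequences.

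First I would start from Voevodsky's decomposition $M(\bG_m) \simeq \Z \oplus \Z(1)[1]$ in $\DM^\eff$, split by the unit section $1 \in \bG_m$. Applying $\uHom_\eff(-,\sF[0])$ gives
\[\uHom_\eff(M(\bG_m), \sF[0]) \simeq \sF[0] \oplus G,\]
and by Proposition \ref{p4.3} combined with tensor-Hom adjunction, for every smooth $X$ and every $i\in\Z$,
\[H^i_\Nis(X\times \bG_m, \sF) \simeq H^i_\Nis(X,\sF) \oplus H^i_\Nis(X,G).\]

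Next I would apply the Gysin localisation triangle to the closed immersion $\{0\}\times X \hookrightarrow \A^1\times X$. The Gysin map $\Z(1)[2]\to M(\A^1)=\Z$ vanishes in $\DM^\eff$ (since $\DM^\eff(\Z(1)[2],\Z)=0$), so the triangle splits. Voevodsky's purity isomorphism $H^{i+1}_{\{0\}\times X}(\A^1\times X,\sF) \simeq H^i_\Nis(X,\sF_{-1})$ for $\sF\in \HI$ over a perfect field, together with homotopy invariance $H^i_\Nis(X\times\A^1,\sF) = H^i_\Nis(X,\sF)$, then yields a split short exact sequence
\[0 \to H^i_\Nis(X,\sF) \to H^i_\Nis(X\times\bG_m,\sF) \to H^i_\Nis(X,\sF_{-1}) \to 0.\]
Comparing the two splittings of $H^i_\Nis(X\times\bG_m,\sF)$ produces a natural isomorphism $H^i_\Nis(X,G)\simeq H^i_\Nis(X,\sF_{-1})$ functorial in $X$ and $i$.

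To convert this into a statement about the cohomology sheaves of $G$, I would specialise $X$ to an essentially smooth Henselian local scheme: on such $X$, higher Nisnevich cohomology of any sheaf vanishes, so the hypercohomology spectral sequence $H^p_\Nis(X,\sH^q(G))\Rightarrow H^{p+q}_\Nis(X,G)$ collapses and $\sH^i(G)(X) = H^i_\Nis(X,G) = H^i_\Nis(X,\sF_{-1})$. The right-hand side equals $\sF_{-1}(X)$ for $i=0$ and vanishes for $i>0$; the case $i<0$ is trivial. Sheafifying gives $\sH^0(G)\simeq \sF_{-1}$ and $\sH^i(G)=0$ for $i\ne 0$, which proves the first equality and the acyclicity. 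For the second equality, one uses the identification $\bG_m\simeq \Z(1)[1]$ in $\DM^\eff$ (placing $\bG_m$ in the heart $\HI$) and the general fact that $\uHom_{\HI}(-,-) = \sH^0(\uHom_\eff(-,-))$ on the heart, to conclude $\uHom_{\HI}(\bG_m,\sF) = \sH^0(G) = \sF_{-1}$.

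The main obstacle is Voevodsky's purity identification $H^{i+1}_{\{0\}\times X}(\A^1\times X,\sF)\simeq H^i_\Nis(X,\sF_{-1})$, which is where the perfectness of $F$ truly enters, via the strict homotopy invariance of $\HI$-sheaves; granting this input, the argument reduces to a comparison of two direct-summand splittings of $H^i_\Nis(X\times\bG_m,\sF)$ followed by sheafification at Henselian local points.
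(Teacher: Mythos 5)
This proposition is not proved in the paper: it is imported verbatim from Kahn--Yamazaki \cite[Prop.\ 4.3 and Rk.\ 4.4]{somekawa}, so there is no internal proof to compare against. Your argument is correct and is essentially the standard one (and close to the proof in the cited reference): decompose $M(\bG_m)\simeq\Z\oplus\Z(1)[1]$, compute $\DM^\eff(M(X),\uHom_\eff(M(\bG_m),\sF[0])[i])=H^i_\Nis(X\times\bG_m,\sF)$ via adjunction and Proposition \ref{p4.3}, compare with the split localisation sequence for $\{0\}\times X\subset\A^1\times X$ whose third term is identified with $H^i_\Nis(X,\sF_{-1})$ by Voevodsky's purity (this is where perfectness and strict homotopy invariance enter), and then read off the cohomology sheaves on Henselian local (or function-field) points. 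Two small points of hygiene: (1) the splitting of both decompositions is most cleanly justified by the unit section $1\in\bG_m$ — your appeal to the vanishing of ``the Gysin map $\Z(1)[2]\to M(\A^1)$'' has the arrow reversed (the Gysin map goes $M(\A^1)\to\Z(1)[2]$, and the relevant vanishing group is $\DM^\eff(\Z,\Z(1)[2])=\Pic(\Spec F)=0$); since both splittings are induced by $1^*$, their complementary summands $\Ker(1^*)$ literally coincide, which is what makes the comparison of the two decompositions work and also keeps the identification compatible with transfers. (2) Before collapsing the hypercohomology spectral sequence you should note that $G=\uHom_\eff(\Z(1)[1],\sF[0])$ is cohomologically bounded below, which follows from $H^i_\Nis(X\times\bG_m,\sF)=0$ for $i<0$. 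With these touches the argument is complete.
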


By this proposition, one has  canonical isomorphisms for any $C\in \DM^\eff$
\begin{equation}\label{eq4.4}
\sH_n(\uHom_\eff(\Z(1)[1],C))\simeq \sH_{n}(C)_{-1}\qquad (n\in\Z).
\end{equation}

The next proposition gives some handle on the functor
$i^\o\nu_{\le 0}$. We shall use the following notation: for $\sF,\sG\in \HI$, we write 
\begin{equation}\label{eqTor}
\Tor_i^{\DM}(\sF,\sG)= \sH_i(\sF[0]\otimes \sG[0]), \quad \sF\otimes_{\HI} \sG= \Tor_0^{\DM}(\sF,\sG)
\end{equation}
where the tensor product $\sF[0]\otimes \sG[0]$ is computed in $\DM^\eff$. 

Note that  $\otimes_{\HI}$ is the tensor product on $\HI$ induced by the one on $\DM^\eff$; it is right $t$-exact because the tensor product in $D(\PST)$ is right $t$-exact and the functor $LC\circ La:D(\PST)\to \DM^\eff$ is right $t$-exact. In particular, $\Tor_i^{\DM}(\sF,\sG)=0$ for $i<0$. On the other hand, $\Tor_i^{\DM}(-,-)$ need not yield the $i$-th derived functor of $\otimes_{\HI}$  (assuming it exists), see \S \ref{r5.1}.

Let $\sF\in \HI$. By adjunction, Proposition \ref{p4.6} yields a map
\begin{equation}\label{eq5.3}
\epsilon_\sF:\bG_m\otimes_{\HI}\sF_{-1}\to \sF.
\end{equation}

\begin{prop}\label{nu0C} 
 a) Let $C\in \DM^\eff$ be such that $\sH_q(C)=0$ for $q<0$. Then $\sH_0(i^\o\nu_{\le 0} C)=\Coker\epsilon_{\sH_0(C)}$.\\
 b) If $C$ is
a sheaf $\sF$ concentrated in degree $0$, we have
\[\sH_{q}(i^\o\nu_{\le 0}\sF[0])=\begin{cases}
\Tor_{q-1}^{\DM}(\bG_m,\sF_{-1})&\text{for $q>1$} \\
\Ker \epsilon_\sF&\text{for $q=1$}\\
\Coker\epsilon_\sF&\text{for $q=0$}\\
0 &\text{for $q<0$.}
\end{cases}\]
In particular, the sheaves on the right hand side belong to $\HI^\o$. 
\end{prop}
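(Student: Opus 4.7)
The proof will be built on the fundamental exact triangle of Proposition~\ref{l5.3} (available since $F$ is perfect):
\[\uHom_\eff(\Z(1), M)(1) \to M \to i^\o\nu_{\le 0} M \by{+1}.\]
I will first handle part (b), where $M=\sF[0]$, and then deduce part (a) from (b) by truncation.

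For part (b), the key preliminary step is to identify the first term. By Proposition~\ref{p4.6} we have $\uHom_\eff(\Z(1)[1], \sF[0]) \simeq \sF_{-1}[0]$, so shifting gives $\uHom_\eff(\Z(1), \sF[0]) \simeq \sF_{-1}[1]$; since $\Z(1) = \bG_m[-1]$, tensoring yields
\[\uHom_\eff(\Z(1), \sF[0])(1) \simeq \sF_{-1} \otimes^L \bG_m,\]
whose $\sH_q$ equals $\Tor_q^{\DM}(\bG_m,\sF_{-1})$ by \eqref{eqTor}, vanishing for $q<0$. The induced map $\sH_0 \to \sF$ is, using the identification $\sF_{-1}=\uHom_{\HI}(\bG_m,\sF)$ from Proposition~\ref{p4.6}, the evaluation map in $\HI$, i.e.\ $\epsilon_\sF$ of \eqref{eq5.3}. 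Writing out the long exact homology sequence of the triangle and using $\sH_q(\sF[0])=0$ for $q\ne 0$ then gives all four cases at once: for $q\ge 2$ one reads off $\sH_q(i^\o\nu_{\le 0}\sF[0])\simeq \Tor_{q-1}^{\DM}(\bG_m,\sF_{-1})$; for $q=1$ one gets the kernel of $\epsilon_\sF$; for $q=0$ the cokernel; and for $q<0$ zero.

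For part (a), I will reduce to (b) via the truncation triangle
\[\tau_{\ge 1} C \to C \to \sH_0(C)[0] \by{+1},\]
which exists because $\sH_q(C)=0$ for $q<0$. Applying the triangulated functor $i^\o\nu_{\le 0}$ and taking $\sH_0$ produces a four-term exact sequence. Now, by Theorem~\ref{t3.2} the functor $i^\o$ is $t$-exact; hence $\nu_{\le 0}$, being its left adjoint, is right $t$-exact, and therefore so is $i^\o\nu_{\le 0}$. Since $\tau_{\ge 1}C\in \DM^\eff_{\ge 1}$, we conclude $i^\o\nu_{\le 0}(\tau_{\ge 1}C)\in \DM^\eff_{\ge 1}$, so its $\sH_0$ and $\sH_{-1}$ both vanish. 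The long exact sequence collapses to an isomorphism
\[\sH_0(i^\o\nu_{\le 0}C) \iso \sH_0(i^\o\nu_{\le 0}\sH_0(C)[0]),\]
and the right side is $\Coker\epsilon_{\sH_0(C)}$ by part~(b).

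The final assertion that every sheaf appearing on the right hand side of (b) lies in $\HI^\o$ is automatic: since $i^\o$ is $t$-exact and induces the inclusion $\HI^\o\hookrightarrow\HI$ on hearts, $\sH_q(i^\o N)\in \HI^\o$ for every $N\in\DM^\o$. The only nontrivial step is the initial identification of $\uHom_\eff(\Z(1),\sF[0])(1)$ with $\sF_{-1}\otimes^L\bG_m$ and of the resulting boundary map with $\epsilon_\sF$; after that everything is formal manipulation of long exact sequences and $t$-exactness.
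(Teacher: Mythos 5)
Your proof is correct and follows the paper's own route: both rest on the exact triangle of Proposition~\ref{l5.3}, the identification of $\uHom_\eff(\Z(1),-)(1)$ via Proposition~\ref{p4.6} (equivalently \eqref{eq4.4}), and the resulting long exact homology sequence. The only (harmless) variation is in part~(a), where you reduce to~(b) by truncating $C$ and invoking the right $t$-exactness of $i^\o\nu_{\le 0}$, whereas the paper applies \eqref{eq4.4} directly to $C$ to see that $\uHom_\eff(\Z(1),C)(1)$ lies in degrees $\ge 0$ with $\sH_0$ equal to $\bG_m\otimes_{\HI}\sH_0(C)_{-1}$.
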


\begin{proof}  
Using Proposition \ref{l5.3}, \eqref{eq4.4} gives a) for $n=0$ and b) if $C=\sF[0]$.
\end{proof}

We can strengthen this proposition as follows, thus giving an
interesting way to produce birational sheaves:

\begin{prop}\label{p14.1} Let $\sF\in \HI$. Then\\
a) For all $q>0$, $\Tor_q^{\DM}(\bG_m,\sF)\in \HI^\o$.\\
b) The adjunction map
\[\sF\to (\bG_m\otimes_{\HI}\sF)_{-1}\]
is an isomorphism.
\end{prop}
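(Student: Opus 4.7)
The plan is to reduce both statements to Voevodsky's cancellation theorem \cite{voecan}, combined with Proposition \ref{p4.6}, formula \eqref{eq4.4} and Proposition \ref{p2.3}. Since $F$ is perfect, the latter identifies $\HI^\o\subset\HI$ with the kernel of $(-)_{-1}$: part a) then reduces to showing $\Tor_q^{\DM}(\bG_m,\sF)_{-1}=0$ for all $q>0$, while part b) requires computing $(\bG_m\otimes_{\HI}\sF)_{-1}=\Tor_0^{\DM}(\bG_m,\sF)_{-1}$ and checking that the adjunction map coincides with the resulting isomorphism.

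I would treat both parts uniformly by computing $\Tor_n^{\DM}(\bG_m,\sF)_{-1}$ for every $n\in\Z$. Using $\bG_m[0]\simeq\Z(1)[1]$ in $\DM^\eff$ and applying \eqref{eq4.4} to $C=\bG_m[0]\otimes\sF[0]$, one obtains
\[\Tor_n^{\DM}(\bG_m,\sF)_{-1}\simeq\sH_n\bigl(\uHom_\eff(\Z(1)[1],\Z(1)[1]\otimes\sF[0])\bigr).\]
The cancellation theorem provides, for any $D\in\DM^\eff$, a natural isomorphism $\uHom_\eff(\Z(1)[1],\Z(1)[1]\otimes D)\iso D$ (the internal-Hom form of the fact that tensoring with $\Z(1)$ is fully faithful). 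Specialising to $D=\sF[0]$ and extracting $\sH_n$ yields $\sF$ for $n=0$ and $0$ for $n\ne 0$. Part a) then follows immediately via Proposition \ref{p2.3}, and one obtains an abstract isomorphism $\sF\iso(\bG_m\otimes_{\HI}\sF)_{-1}$ to feed into part b).

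To finish part b) I would verify that this isomorphism is really the unit of the adjunction $(\bG_m\otimes_{\HI}-)\dashv(-)_{-1}$ on $\HI$. This adjunction is the $\sH_0$-descent of the $\DM^\eff$-adjunction $(-\otimes\Z(1)[1])\dashv\uHom_\eff(\Z(1)[1],-)$: by \eqref{eqTor}, $\otimes_{\HI}$ is $\sH_0$ of the $\DM^\eff$-tensor product, and by Proposition \ref{p4.6}, $(-)_{-1}$ is $\sH_0\uHom_\eff(\Z(1)[1],-[0])$, with the source $\uHom_\eff$-complex already concentrated in degree $0$. Consequently the $\HI$-unit at $\sF$ is obtained by applying $\sH_0$ to the $\DM^\eff$-unit $\sF[0]\to\uHom_\eff(\Z(1)[1],\Z(1)[1]\otimes\sF[0])$, and the latter is precisely the cancellation isomorphism. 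The main obstacle is this final bookkeeping — matching the $\HI$-adjunction unit with the map produced by cancellation — rather than the derived-category calculation, which becomes purely formal once \eqref{eq4.4} and cancellation are in hand.
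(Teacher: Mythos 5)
Your proposal is correct and follows essentially the same route as the paper's proof: both rest on the cancellation theorem to show that the unit $\sF[0]\to\uHom_\eff(\Z(1)[1],\Z(1)[1]\otimes\sF[0])$ is an isomorphism, then read off $\Tor_n^{\DM}(\bG_m,\sF)_{-1}$ via Proposition \ref{p4.6}/\eqref{eq4.4} and conclude a) from Proposition \ref{p2.3}. Your final paragraph merely makes explicit the identification of the $\HI$-adjunction unit with $\sH_0$ of the $\DM^\eff$-unit, a bookkeeping point the paper leaves implicit.
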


\begin{proof} 
We shall prove a) and b) together. By quasi-invertibility of $\Z(1)$ \cite{voecan}, the
adjunction map
\[\sF\to \uHom_\eff(\Z(1),\sF(1))\]
is an isomorphism. The right hand side may be rewritten
$\uHom_\eff(\bG_m,\bG_m\allowbreak\otimes\sF)$. By
Proposition \ref{p4.6}, its $q$-th homology sheaf is
$\Tor^{\DM}_q(\bG_m,\sF)_{-1}$, which is therefore $0$ for $q>0$. The conclusion now  follows from Proposition \ref{p2.3}.
\end{proof}

\subsection{Birational sheaves and tensor products} In this subsection, we show that the tensor product in $\HI$ of two birational sheaves is birational. 

More correctly, let $\nu_0:\HI\to \HI^\o$ be the left adjoint of $i^\o$ (Proposition \ref{p5.5}): by Lemma \ref{lA.5},  $\nu_0 \sF = \sH_0(\nu_{\le 0}\sF[0])$. Then $\nu_0$ induces on $\HI^\o$ a tensor structure $\otimes_{\HI^\o}$ characterised by
\[\nu_0\sF\otimes_{\HI^\o} \nu_0 \sG = \nu_0(\sF\otimes_{\HI} \sG)\]
and this tensor product is right $t$-exact.

Let $\sF,\sG\in \HI^\o$. From the isomorphism
\[\nu_0(i^\o\sF\otimes_{\HI} i^\o \sG)\simeq \nu_0i^\o \sF\otimes_{\HI^\o} \nu_0i^\o \sG\iso \sF \otimes_{\HI^\o}\sG\]
we get by adjunction a morphism
\begin{equation}\label{eqHIo}
i^\o\sF\otimes_{\HI} i^\o \sG\to i^\o\left(\sF \otimes_{\HI^\o}\sG\right).
\end{equation}

\begin{thm}\label{t4.2} \eqref{eqHIo} is an isomorphism.
\end{thm}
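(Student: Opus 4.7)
Set $\sH := i^\o\sF \otimes_{\HI} i^\o\sG$. Since $F$ is perfect, the unit morphism $u\colon \sH \to i^\o\nu_0\sH = i^\o(\sF \otimes_{\HI^\o} \sG)$ is an epimorphism by Proposition~\ref{p5.5}, so it suffices to show $u$ is a monomorphism, equivalently (Proposition~\ref{p2.3}) that $\sH_{-1} = 0$. More concretely, apply Proposition~\ref{nu0C}(a) to the complex $C := i^\o\sF[0] \otimes^L i^\o\sG[0] \in \DM^\eff$, and use that $\nu_{\le 0}$ is strongly monoidal (Proposition~\ref{p3.2}, so $\nu_{\le 0}C = \sF[0] \otimes^L_{\DM^\o} \sG[0]$) together with the $t$-exactness of $i^\o$ (Theorem~\ref{t3.2}): this identifies $i^\o(\sF \otimes_{\HI^\o} \sG) = \Coker(\epsilon_\sH : \bG_m \otimes_{\HI} \sH_{-1} \to \sH)$, showing that $u$ is the canonical projection. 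It is an isomorphism precisely when $\sH_{-1} = 0$, since $\epsilon_\sH$ is adjoint to the identity on $\sH_{-1}$.

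To establish $\sH_{-1} = 0$, observe that both bifunctors $(\sF,\sG) \mapsto i^\o\sF \otimes_{\HI} i^\o\sG$ and $(\sF,\sG) \mapsto i^\o(\sF \otimes_{\HI^\o} \sG)$ are right exact in each variable, since $\otimes_{\HI}$ and $\otimes_{\HI^\o}$ are right $t$-exact and $i^\o$ is $t$-exact. Using projective presentations in $\HI^\o = \Mod\BFC^\natural$ and Proposition~\ref{p5.3}(b) (which describes projectives as summands of sums of representables $h_0^\o(Z)$), combined with Theorem~\ref{t2.2} (reducing arbitrary $Z \in \BFC^\natural$ to smooth projective varieties after inverting the exponential characteristic $p$), a standard five-lemma argument reduces the claim to the case $\sF = h_0^\o(X)$, $\sG = h_0^\o(Y)$ with $X, Y$ smooth projective.

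For such $X, Y$, Corollary~\ref{p2.5} identifies $i^\o h_0^\o(-) = h_0^\Nis(-)$. The Künneth-type identity $\sH_0(P \otimes^L Q) = \sH_0(P) \otimes_{\HI} \sH_0(Q)$ for $P, Q \in \DM^\eff$ with $\sH_q = 0$ for $q < 0$ (a direct consequence of right $t$-exactness of $\otimes^L$ applied to the truncation triangles), with $P = M(X)$, $Q = M(Y)$ and $M(X) \otimes M(Y) = M(X \times Y)$, yields $h_0^\Nis(X) \otimes_{\HI} h_0^\Nis(Y) = h_0^\Nis(X \times Y)$. On the other hand, the Yoneda embedding $\BFC^\natural \to \HI^\o$ is monoidal, so $h_0^\o(X) \otimes_{\HI^\o} h_0^\o(Y) = h_0^\o(X \times Y)$, whose $i^\o$-image is $h_0^\Nis(X \times Y)$ again by Corollary~\ref{p2.5}; the two sides coincide.

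The main obstacle is the $p$-primary part: Theorem~\ref{t2.2} only provides the reduction to smooth projective representatives after inverting the exponential characteristic $p$, so an integral treatment of the final step requires a more direct argument for $\sH_{-1} = 0$. A natural approach is to combine the formula $\sH_{-1} = \sH_0(\uHom_\eff(\bG_m, C))$ from Proposition~\ref{p4.6} together with~\eqref{eq4.4} with the vanishing $\uHom_\eff(\bG_m, i^\o\sF) = 0$ of Lemma~\ref{l6.1}, exploiting the cancellation theorem to propagate this vanishing through the derived tensor $i^\o\sF[0] \otimes^L i^\o\sG[0]$ and thereby bypass the dependence on Theorem~\ref{t2.2}.
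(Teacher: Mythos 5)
Your reduction in the first paragraph is sound: by Proposition \ref{p5.5} the unit is epi, and by Proposition \ref{p2.3} (with $F$ perfect) everything comes down to showing that $\sH=i^\o\sF\otimes_{\HI}i^\o\sG$ is birational, i.e.\ $\sH_{-1}=0$. Your d\'evissage then works as far as it goes: right exactness in each variable, exactness of $(-)_{-1}$, and the K\"unneth identity $\sH_0(P\otimes Q)=\sH_0(P)\otimes_{\HI}\sH_0(Q)$ for $P,Q$ concentrated in non-negative homological degrees correctly reduce the claim to representables, and Theorem \ref{p12.3.5} handles $h_0^\Nis(X)\otimes_{\HI}h_0^\Nis(Y)=h_0^\Nis(X\times Y)$ for $X,Y$ smooth projective. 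But the gap you flag yourself is genuine and is not repaired: projective objects of $\HI^\o=\Mod\BFC^\natural$ are summands of sums of $h_0^\o(Z)$ for $Z$ smooth but \emph{not necessarily proper}, and replacing such $Z$ by smooth projective varieties via Theorem \ref{t2.2} costs you the exponential characteristic $p$ (this is exactly the difficulty recorded in Remark \ref{r1.2}). Your suggested fix does not work as stated: Lemma \ref{l6.1} gives $\uHom_\eff(\Z(1),i^\o\sF)=0$, but there is no projection formula letting you propagate this vanishing through $i^\o\sF[0]\otimes^L i^\o\sG[0]$ --- $\uHom_\eff(\Z(1),A\otimes B)$ is not controlled by $\uHom_\eff(\Z(1),A)$ and $\uHom_\eff(\Z(1),B)$, and if it were, the theorem would be essentially formal. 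So what you have is a proof after inverting $p$ (and an integral proof in characteristic zero), not the integral statement.

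The paper's proof is entirely different and avoids the d\'evissage. It verifies directly that $\sH$ satisfies the ``universally proper'' criterion of Proposition \ref{p2.2}: for a function field $K/F$, a regular curve $C/K$ and a closed point $c\in C$, one must show $\sH(\sO_{C,c})\to\sH(K(C))$ is surjective. The key integral input is the generators-and-relations (Somekawa-type) description of $\otimes_{\HI}$ from \cite[(2.10)]{somekawa}: every element of $\sH(K(C))$ is a sum of transfers $Tr_{E/K(C)}$ of images of $\sF(E)\otimes\sG(E)$ over finite extensions $E/K(C)$. Since $\sF$ and $\sG$ are birational, each $\sF(E)\otimes\sG(E)$ equals $\sF(\sO_{C_E,c_E})\otimes\sG(\sO_{C_E,c_E})$ for the proper regular model $C_E$, and pushing forward along the finite map $\sO_{C,c}\to\sO_{C_E,c_E}$ lands in $\sH(\sO_{C,c})$. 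This works with integral coefficients in all characteristics. If you want to salvage your approach, the missing ingredient is precisely some integral substitute for Theorem \ref{t2.2} (or a direct proof that $i^\o h_0^\o(Z)\otimes_{\HI}i^\o h_0^\o(W)$ is birational for $Z,W$ smooth non-proper), and the paper's use of the Somekawa presentation is in effect that substitute.
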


The analogous result for the inclusion $\HI\inj \NST$ is well-known to be false: for example $\bG_m\otimes_{\NST}\bG_m$ is not homotopy invariant. Indeed, its quotient $\bG_m\otimes_{\HI} \bG_m$ verifies  $\bG_m\otimes_{\HI} \bG_m(F)=K_2^M(F)$ \cite[Th. 5.1]{mvw}, while $\bG_m\otimes_{\NST}\bG_m(F)=F^*\otimes F^*$ if $F$ is algebraically closed.

\begin{proof} For ease of notation, let us suppress the use of $i^\o$. So we must show that the sheaf $\sH=\sF\otimes_{\HI}\sG$ is birational. We shall use the characterisation of birational sheaves given in Proposition \ref{p2.2}. 

So, let $K/F$ be a function field, $C/K$ be a (proper) regular curve and $c\in C$ be a closed point. We must show that the map
\[\sH(\sO_{C,c})\to \sH(K(C))\]
is surjective.

By the surjectivity of  the map in \cite[(2.10)]{somekawa}, the composition
\[\bigoplus_{[E:K(C)]<\infty} \sF(E)\otimes \sG(E)\to \bigoplus_{[E:K(C)]<\infty} \sH(E)\by{(Tr_{E/K(C)})} \sH(K(C))\]
is surjective. 

For each $E$ as above, let $C_E$ be the proper regular model of $E/K$; let $f_E:C_E\to C$ be the canonical map and let $c_E=f_E^{-1}(c)$. Then $\sO_{C_E,c_E}$ is finite over $\sO_{C,c}$, hence we have a commutative diagram
\[\begin{CD}
\displaystyle\bigoplus_{[E:K(C)]<\infty} \sF(\sO_{C_E,c_E})\otimes \sG(\sO_{C_E,c_E})@>\sim>> \displaystyle\bigoplus_{[E:K(C)]<\infty}\sF(E)\otimes \sG(E)\\
@VVV @VVV\\
\sH(\sO_{C,c})@>>> \sH(K(C))
\end{CD}\]
where the top map is an isomorphism because $\sF$ and $\sG$ are birational. The theorem follows.
\end{proof}

\begin{rk} Let $E$ be an elliptic curve: one can show that $\Tor_1^{\DM}(E,E)$ is not birational, see \cite[Prop. 4.2]{rqnr}. Hence the functor $i^\o:\DM^\o\to \DM^\eff$ is not monoidal.
\end{rk}

\section{Birational sheaves and cycle modules} Let $\CM$ denote the category of cycle modules in the sense of Rost \cite{rost}. In \cite[Th. 6.2.3]{birat-pure} we defined  a pair of adjoint functors
\[K^?: \Mod\Chow^\o \leftrightarrows \CM : A^0 \]
where $A^0$ sends a cycle module $M=(M_n)_{n\in\Z}$ to a functor extending the assignment $X\mapsto A^0(X,M_0)$, and $K^?$ is fully faithful. The description of the essential image of $K^?$ was left open; we now have the material to answer this question. 

\subsection{Cycle modules and Somekawa $K$-groups} To formulate the answer, we need some preparation. Recall that, in \cite{Somekawa}, Somekawa associated an abelian group $K(F;G_1,\dots,G_n)$ to $n$ semi-abelian varieties $G_1,\dots,G_n$ over $F$; this definition was extended in \cite{somekawa} to objects $G_1,\dots,G_n$ of $\HI$.\footnote{Recall that a semi-abelian variety defines an object of $\HI$ by \cite[Lemma 3.2]{spsz} and \cite[Lemma 1.3.2]{bar-kahn}.} 

Let $M$ be a cycle module. Recall now that D\'eglise associated to $M$ a graded object $(\sM_n)$ of $\HI$ such that
\begin{equation}\label{eq4.1}
\sM_n(X)=A^0(X,M_n)
\end{equation}
for any $X\in \Sm$. We then have:

\begin{lemma}\label{l4.7} For any extension $E/F$, the map $ E^*\otimes M_n(E)\to M_{n+1}(E)$ from \cite[D3]{rost} induces a homomorphism
\[\theta_n:K(E;\bG_m,\sM_n)\to M_{n+1}(E).\]
\end{lemma}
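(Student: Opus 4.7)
I would define $\theta_n$ directly on the free sum underlying the Somekawa group, using Rost's cycle-module pairing, and then verify it kills the Somekawa defining relations. For each finite extension $E'/E$, combine Rost's $K_1^M$-product $E'^*\otimes M_n(E')\to M_{n+1}(E')$ (axiom D3 of \cite{rost}) with the norm $N_{E'/E}:M_{n+1}(E')\to M_{n+1}(E)$ (axiom D2). The identification $\sM_n(E')=M_n(E')$ of \eqref{eq4.1} assembles these into
\[\tilde\theta_n:\bigoplus_{E'/E \text{ finite}}\bG_m(E')\otimes\sM_n(E')\to M_{n+1}(E),\]
and one has to check that $\tilde\theta_n$ descends through the Somekawa defining relations of \cite{Somekawa,somekawa}. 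The first family, Frobenius reciprocity for towers $E\subseteq E'\subseteq E''$, follows immediately from Rost's projection formula (axiom R3c) together with the transitivity $N_{E'/E}\circ N_{E''/E'}=N_{E''/E}$.

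The main obstacle is the Weil reciprocity relation. Its typical generator is indexed by a proper regular curve $C$ over a finite $E'/E$ with function field $K$, an auxiliary $h\in K^*$, and regular sections $g_1\in\bG_m(C)$ and $g_2\in\sM_n(C)=\ker\!\bigl(M_n(K)\to\bigoplus_P M_{n-1}(\kappa(P))\bigr)$, with relation
\[\sum_{P\in C^{(1)}}\mathrm{ord}_P(h)\cdot T_{\kappa(P)/E}[g_1(P)\otimes g_2(P)]=0.\]
Applying $\tilde\theta_n$ and using Rost's projection formula to pull the constant $\{g_1\}\in K_1^M(E')$ past each $N_{\kappa(P)/E'}$, the image equals $N_{E'/E}(\{g_1\}\cdot\Sigma)$ with $\Sigma=\sum_P\mathrm{ord}_P(h)\cdot N_{\kappa(P)/E'}\,g_2(P)$. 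By Rost's residue formula (axiom R3f) and the unramifiedness of $g_2$ at each closed $P$, one has $\partial_P(\{h\}\cdot g_2)=\mathrm{ord}_P(h)\cdot g_2(P)$, whence $\Sigma=\sum_P N_{\kappa(P)/E'}\partial_P(\{h\}\cdot g_2)$. Rost's reciprocity law for the proper curve $C$, applied to the class $\{h\}\cdot g_2\in M_{n+1}(K)$, gives $\Sigma=0$ in $M_n(E')$, and the claim follows.

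The delicate point is aligning Somekawa's local symbol at $P$ with Rost's residue $\partial_P$ at the level of signs and regularity hypotheses; this is the technical heart of the argument. Beyond that, the proof is a formal combination of the D2 and D3 axioms, Rost's multiplicative and residue rules (R3), and his reciprocity law for a proper curve. Should one work with a version of the Somekawa relations that permits the $g_i$ to be ramified at some closed points, additional terms appear in $\partial_P(\{h\}\cdot g_2)$ at those points; but these extra contributions are themselves images of residues of a global class on $C$, so they cancel by the same instance of Rost's reciprocity.
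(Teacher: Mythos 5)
Your proposal follows essentially the same route as the paper: $\theta_n$ is defined on generators $\{f,m\}_{E'/E}$ by composing Rost's product D3 with the transfer D2, bilinearity is immediate, the projection formula follows from Rost's rules R2b and R2c (not R3c, which concerns residues), and the reciprocity relation is killed by Rost's (RC) for a proper curve. The one ingredient the paper uses that you should borrow is \cite[Rk.\ 6.3]{somekawa}: it reduces the Somekawa-type relations to the relations ``of geometric type'' of \cite[Def.\ 6.1]{somekawa}, which is precisely the unramified situation your residue computation $\partial_P(\{h\}\cdot\{g_1\}\cdot g_2)=\mathrm{ord}_P(h)\,\{g_1(P)\}\cdot g_2(P)$ handles cleanly; this disposes of the ``delicate point'' about matching local symbols with residues and of the ramified case you only sketch in your last paragraph.
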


\begin{proof} Recall from \cite[Def. 5.1]{somekawa} that $K(E;\bG_m,\sM_n)$ has generators $\{f,m\}_{E'/E}$ where $E'$ runs through the finite extensions of $E$, $f\in E^*$ and $m\in \sM_n(\Spec E)=M_n(E)$, subject to 3 types of relations directly generalising those of \cite[(1.2.0), (1.2.1) and (1.2.2)]{Somekawa}: bilinearity, the projection formula and relations ``of Somekawa type''.   Write $\phi:E\to E'$ for the inclusion. We define $\theta_n(\{m,f\}) = \phi^*(f\cdot m)$, where $\phi^*$ is the transfer map from \cite[D2]{rost} and the $f\cdot m$ is the product from \cite[D3]{rost}. We need to check that $\theta_n$ respects the relations. Bilinearity is obvious and the projection formula follows from \cite[R2b and R2c]{rost}. By \cite[Rk. 6.3]{somekawa}, to prove the relations of Somekawa type it now suffices to prove the finer relations ``of geometric type'' from \loccit, Def. 6.1; these follow directly from  \cite[(RC) in Prop. 2.2]{rost}.
\end{proof}

\begin{thm}\label{t4.1} Let $M\in \CM$. Then $M$ is in the essential image of $K^?$ if and only if:
\begin{thlist}
\item $M_n=0$ for $n<0$;
\item for any $n\ge 0$, the map $\theta_n$ of Lemma \ref{l4.7} is an isomorphism.
\end{thlist}
\end{thm}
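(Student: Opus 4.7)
The plan is to exploit the adjunction $K^? \dashv A^0$ together with the full faithfulness of $K^?$: for any $M \in \CM$, the only candidate preimage is $N := A^0(M) \in \Mod\Chow^\o$, and $M$ lies in the essential image of $K^?$ if and only if the counit $\epsilon_M: K^?(A^0(M)) \to M$ is an isomorphism of cycle modules. So the proof splits into showing that each of (i), (ii) is forced by $M \cong K^?(N)$, and conversely that (i) and (ii) suffice to make $\epsilon_M$ an iso.

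For \emph{necessity}, suppose $M = K^?(N)$. The vanishing $M_n = 0$ for $n < 0$ should follow by direct inspection of the construction of $K^?$ in \cite[\S 6.2]{birat-pure}, where each $M_n$ is built as a quotient of tensor powers of $\bG_m$ paired with $N$ and is therefore zero in negative degrees. For (ii), the key input is an identification, natural in a field $E/F$,
\[ K^?(N)_n(E) \iso K(E;\bG_m^{\otimes n},N_E), \]
built into the definition of $K^?$ via the Somekawa presentation of tensor products in $\HI$ (\cf \cite{somekawa}). Under this identification, the multiplication-by-units map of $K^?(N)$ corresponds to the canonical concatenation isomorphism
\[ K(E;\bG_m,K(E;\bG_m^{\otimes n},N_E)) \iso K(E;\bG_m^{\otimes(n+1)},N_E), \]
so $\theta_n$ is an isomorphism.

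For \emph{sufficiency}, assume (i) and (ii). Since $\epsilon_M$ is a morphism in $\CM$ it suffices to show it is an iso on each graded piece, which I would do by induction on $n \ge 0$. For $n = 0$, the standard identity for a fully faithful left adjoint says $A^0(\epsilon_M)$ is inverse to the unit $A^0(M) \iso A^0K^?A^0(M)$; since $A^0$ restricted to degree $0$ reconstructs the birational sheaf $M_0$ faithfully (the generic stalks of $A^0(-,M_0)$ recover $M_0$), this gives $\epsilon_{M,0}$ is an iso. For the inductive step, the formula from the necessity direction identifies $K^?(A^0(M))_{n+1}(E)$ with $K(E;\bG_m,K^?(A^0(M))_n(E))$; by the inductive hypothesis this equals $K(E;\bG_m,\sM_n(E))$, and by (ii) this equals $M_{n+1}(E)$ via $\theta_n$. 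Naturality of the Somekawa construction then ensures the resulting square commutes with $\epsilon_M$, so $\epsilon_{M,n+1}$ is an iso.

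The main obstacle is establishing the Somekawa formula $K^?(N)_n(E) \iso K(E; \bG_m^{\otimes n}, N_E)$ together with its compatibility with the multiplication-by-units structure map of $K^?(N)$ as a cycle module. Once this is in place, both directions reduce to formal adjunction manipulations together with the concatenation/associativity identities for Somekawa $K$-groups from \cite{somekawa}.
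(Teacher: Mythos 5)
Your overall skeleton matches the paper's: since $K^?$ is a fully faithful left adjoint, $M$ lies in its essential image iff the counit $K^?A^0(M)\to M$ is an isomorphism, and both directions hinge on an explicit description of $K^?(N)_n$. But the step you yourself flag as ``the main obstacle'' --- the identification $K^?(N)_n(E)\simeq K(E;\bG_m^{\otimes n},N_E)$, compatibly with the multiplication by units defining the cycle-module structure --- is not ``built into the definition of $K^?$'', and leaving it unproved leaves a hole exactly where the content of the theorem sits. In the paper this identification is Proposition \ref{t5.2}: one proves $K^?\simeq \sigma^\infty\circ i^\o\circ D_!$ using Merkurjev's theorem that the cycle module $K^X$ corepresents $M_*\mapsto A^0(X,M_0)$, D\'eglise's description of $\sigma^\infty$ (whose $n$-th term is $-\otimes_{\HI}\bG_m^{\otimes n}$ for $n\ge 0$ and Voevodsky's contraction for $n<0$), and the comparison $K(E;\bG_m,\sF)\simeq(\bG_m\otimes_{\HI}\sF)(\Spec E)$ of \cite{somekawa}. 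Without some such representability input, neither your necessity of (ii) nor your inductive step can be completed. Relatedly, your necessity of (i) ``by direct inspection'' is shakier than it looks: $K^?$ is a left Kan extension, and colimits in $\CM\simeq\HI_*$ are not computed degreewise (the structure isomorphisms involve $\uHom_{\HI}(\bG_m,-)$, which does not commute with colimits); the paper instead deduces (i) from the fact that $K^?(N)_0$ is a birational sheaf together with Proposition \ref{p2.3} (vanishing of the contraction of a birational sheaf), which is where perfectness of $F$ and Voevodsky's Gersten resolution enter.

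Once $K^?\simeq\sigma^\infty i^\o D_!$ is in hand, the paper's sufficiency argument is shorter than your induction: condition (i) says the contraction of $\sM_0$ vanishes, so $\sM_0\in\HI^\o$ by Proposition \ref{p2.3}, hence $i^\o R^0_\nr\sM_0=\sM_0$, and condition (ii) then identifies the counit $\sigma^\infty i^\o R^0_\nr\omega^\infty\sM_*\to\sM_*$ termwise. Your induction could be made to work, but note a characteristic-$p$ pitfall in your base case: the isomorphy of $A^0(\epsilon_M)$ only gives you $\epsilon_{M,0}$ at function fields of smooth \emph{projective} varieties, and over a perfect field of positive characteristic not every finitely generated extension admits such a model; the paper sidesteps this by detecting isomorphisms in $\HI$ at generic points of arbitrary smooth models via \cite[Prop. 4.20]{voepre}.
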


\subsection{A reformulation} We first translate Theorem \ref{t4.1} into a statement involving homotopy invariant Nisnevich sheaves with transfers, whose proof is then rather straightforward. For this, we need to recall D\'eglise's theory from \cite{modhot} in further detail:

 By a construction going back to Morel and Voevodsky, the category $\DM^\eff$ can be fully embedded into a larger $\otimes$-triangulated category $\DM$ of ``$\Z(1)$-spectra''. There is an adjunction \cite[Prop. 4.7]{modhot}:
\[\Sigma^\infty: \DM^\eff \leftrightarrows \DM : \Omega^\infty \]
where $\Sigma^\infty$ is fully faithful, monoidal and $\Sigma^\infty\Z(1)$ is invertible. Moreover the homotopy $t$-structure of $\DM^\eff$ extends to a $t$-structure on $\DM$, with heart the category $\HI_*$ of \emph{homotopic modules} \cite[Th. 5.11]{modhot}; the functor $\Omega^\infty$ is $t$-exact. 

By  \cite[Def. 1.17]{modhot}, an object of $\HI_*$ is a sequence
$(\sM_n,\epsilon_n)_{n\in\Z}$, where $\sM_n\in\HI$ and $\epsilon_n$ is an isomorphism $\sM_n\iso \uHom_{\HI}(\bG_m,\sM_{n+1})$. By Proposition \ref{p4.6}, this may also be written $\sM_n\iso (\sM_{n+1})_{-1}$.\footnote{In \cite{modhot}, D\'eglise writes $S_t^1$ for $\bG_m$. These sheaves are isomorphic, as follows for example from \cite[Th. 3.4.2 (iii)]{voetri} applied to $C=\A^1-\{0\}$.} Morphisms are given componentwise. The adjoint functors 
\[\sigma^\infty: \HI \leftrightarrows \HI_* : \omega^\infty \]
induced by $\Omega^\infty$ and $\Sigma^\infty$ are given by
\begin{align*}
\omega^\infty \sM_* &= \sM_0\\
(\sigma^\infty \sF)_n &=
\begin{cases}
\sF\otimes_{\HI} \bG_m^{\otimes n} &\text{if $n\ge 0$}\\
\sF_n \text{ (Voevodsky's contraction)} &\text{if $n< 0$.}
\end{cases}
\end{align*}

D\'eglise's main result, \cite[Th. 3.7]{modhot}, provides an equivalence of categories between $\HI_*$ and  $\CM$. More precisely, his functor $\CM\to \HI_*$ sends a cycle module $M_*$ to a homotopic module $\sM_*$ given by \eqref{eq4.1}. His functor $\HI_*\to \CM$ sends a homotopic module $\sM_*$ to a cycle module $M_*$ such that
\[M_n(K) = \colim \sM_n(U)\]
for any function field $K/F$, where $U$ runs through the collection of smooth models of $K$ and transition maps are open immersions \cite[2.10 and 3.1]{modhot}.

On the other hand, the full embedding $D:\Cor_\rat^\o\to \BFC$ of Proposition \ref{p2.6} yields a pair of adjoint functors
\[D_!: \Mod\Chow^\o \leftrightarrows \HI^\o : D^*. \]

Recall finally that by \cite[(1.1)]{somekawa}, we have an isomorphism
\[K(E;\bG_m,\sM_n)\simeq (\bG_m\otimes_{\HI} \sM_n)(\Spec E)\]
where $\sM_n$ is associated to a given cycle module $M$ as in \eqref{eq4.1}.  
The translation we need for proving Theorem \ref{t4.1} is now provided by:

\begin{prop}\label{t5.2} The functors $K^?$ and $A^0$ are respectively isomorphic to the compositions $\sigma^\infty\circ i^\o\circ D_!$ and $D^*\circ R^0_\nr \circ \omega^\infty$.
\end{prop}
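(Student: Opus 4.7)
The composition $(\sigma^\infty\circ i^\o\circ D_!) \dashv (D^*\circ R^0_\nr \circ \omega^\infty)$ is a pair of adjoint functors, obtained by stacking the adjunctions $D_!\dashv D^*$ (Proposition~\ref{p5.3}), $i^\o\dashv R^0_\nr$ (Proposition~\ref{p5.5}) and $\sigma^\infty\dashv\omega^\infty$ (from \cite{modhot}). Since we also have $K^?\dashv A^0$ by \cite[Th.~6.2.3]{birat-pure}, by uniqueness of adjoints the two displayed identifications in the proposition are equivalent. It therefore suffices to produce a natural isomorphism $A^0 \simeq D^* \circ R^0_\nr \circ \omega^\infty$, from which the statement for $K^?$ will follow formally.

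Let $M=(M_n)_{n\in\Z}\in\CM$. Via D\'eglise's equivalence $\CM\simeq\HI_*$, one has $\omega^\infty M = \sM_0\in\HI$ with $\sM_0(X)=A^0(X,M_0)$ for $X$ smooth, and $(\sM_0)_{-1}=\sM_{-1}$ where $\sM_{-1}(F(v))=M_{-1}(F(v))$ for any function field $F(v)/F$. Applying Theorem~\ref{t6.2} to $\sM_0$, we get for any smooth connected $X$ with function field $K=F(X)$
\[(R^0_\nr \omega^\infty M)(X) = \Ker\left(M_0(K) \by{(\partial_v)} \prod_v M_{-1}(F(v))\right),\]
where $v$ runs over all $F$-divisorial valuations of $K$. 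If $X$ is moreover projective, the valuative criterion of properness shows that every such $v$ is centered on $X$, so this kernel coincides with the kernel of the first differential in Rost's cycle complex, namely $A^0(X,M_0)$. Thus $(D^*R^0_\nr \omega^\infty M)(X) = A^0(X,M_0)$ on objects of $\Chow^\o$ (the functor $D^*$ being simply the restriction of the $\BFC$-module structure along the full embedding $D:\Cor_\rat^\o\inj \BFC$ of Proposition~\ref{p2.6}).

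The main obstacle, and the heart of the proof, is to upgrade this bijection on objects to an isomorphism of $\Chow^\o$-modules, \ie to identify the two transfer structures on the common underlying abelian group. On one side, $D^*R^0_\nr \omega^\infty M$ inherits its $\Chow^\o$-action from the $\BFC$-module structure on $\sM_0$ in $\HI$ (itself coming from finite correspondences); on the other, $A^0(M)$ is equipped with the $\Chow^\o$-action built in \cite[\S 6.2]{birat-pure} out of Rost's pullbacks and proper pushforwards. Both structures are bilinear, and by Theorem~\ref{p12.3.5} and Corollary~\ref{p2.5} the embedding $D$ identifies an elementary generating set for $\Cor_\rat^\o(X,Y)$ ($X,Y$ smooth projective) with the images of graphs of morphisms and transposes of finite correspondences in $\BFC$. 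One then checks the comparison on this generating set, where both structures reduce to the standard contravariant and covariant functoriality of $A^0(-,M_0)$ for morphisms and for finite maps — the latter being exactly how Rost's transfers and the $\BFC$-transfers on $\sM_0$ are respectively defined.

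Granting this compatibility, we obtain the desired natural isomorphism $A^0 \simeq D^*\circ R^0_\nr \circ \omega^\infty$, and the dual identification $K^?\simeq \sigma^\infty\circ i^\o\circ D_!$ follows by taking left adjoints.
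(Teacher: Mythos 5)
Your opening reduction is correct: the two compositions form an adjoint pair, $K^?\dashv A^0$, so it suffices to identify one side and the other follows by uniqueness of adjoints. You then attack the right-adjoint side, and the step you yourself call ``the heart of the proof'' is where the argument fails to close. To match the two $\Cor_\rat^\o$-module structures on $X\mapsto A^0(X,M_0)$ you appeal to ``an elementary generating set for $\Cor_\rat^\o(X,Y)$'' consisting of graphs and transposes, citing Theorem \ref{p12.3.5} and Corollary \ref{p2.5} --- but neither result says anything about generators of $CH_0(Y_{F(X)})$. A generator is the class of an integral $Z\subset X\times Y$ generically finite over $X$; any factorisation of $[Z]$ through graphs and transposes passes through the auxiliary smooth, non-projective variety $Z_U$ (for $U\subseteq X$ a suitable dense open) and an inverted open immersion, i.e.\ it lives in $\BFC$ and leaves the projective world where the $\Chow^\o$-action of \cite[\S 6.2]{birat-pure} is defined by the intersection-theoretic correspondence formula. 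Even granting such a factorisation, the identity between Rost's action of $[Z]$ and the composite $(j^*)^{-1}\circ({}^t\Gamma_p)^*\circ\Gamma_q^*$ is precisely the non-formal compatibility that needs proof; it is asserted, not established. (Compare Remark \ref{r1.2} and Theorem \ref{t2.2} for how delicate factorisation statements of this kind are.) Two smaller points: the valuative criterion only gives each divisorial $v$ a centre on projective $X$, possibly of codimension $\ge 2$, so identifying the unramified kernel with $A^0(X,M_0)$ still requires the codimension-one purity argument of \cite[Prop.\ 2.1.8 e)]{ct} (this is exactly the Corollary following Theorem \ref{t6.2}); and Theorem \ref{t6.2} occurs later in the paper, so your proof silently imports all of \S \ref{s.unram} (not circular, but worth flagging).

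The paper works on the \emph{left}-adjoint side precisely to avoid this transfer comparison. Both $K^?$ (a left Kan extension by construction) and $\sigma^\infty\circ i^\o\circ D_!$ preserve colimits, so it suffices to compare them on representables $y(h^\o(X))$. Merkurjev's theorem \cite[Th.\ 2.10]{merk2} says $K^X$ represents $M_*\mapsto A^0(X,M_0)$ on $\CM$; under D\'eglise's equivalence this becomes the functor $\sM_*\mapsto H^0(X,\sM_0)\simeq\Hom_{\HI_*}(\sigma^\infty h_0^\Nis(X),\sM_*)$, and $D_!y(h^\o(X))=h_0^\Nis(X)$ by Corollary \ref{c1.1}; uniqueness of representing objects then supplies, canonically and naturally, the isomorphism your generating-set argument was meant to produce. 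To repair your proof, either switch to this representability argument, or replace the graphs-and-transposes step by an honest verification that Rost's correspondence action on $A^0(-,M_0)$ agrees with the finite-correspondence transfers on D\'eglise's sheaf $\sM_0$ for an arbitrary correspondence.
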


\begin{proof} By adjunction, it suffices to construct a natural isomorphism $K^?\simeq \sigma^\infty\circ i^\o\circ D_!$. By construction, $K^?$ is obtained as the left Kan extension of Merkurjev's functor
\[X\mapsto K^X\]
where, for a smooth projective variety $X$, $K^X$ is a cycle module such that $K_*^X(E)=A_0(X_E,K_*)$ for any function field $E/F$. Here $K$ denotes the cycle module given by Milnor $K$-theory. As $K^X$ represents the functor $M_*\mapsto A^0(X,M_0)$  \cite[Th. 2.10]{merk2}, its image in $\HI_*$ represents the functor $\sM_*\mapsto H^0_\Nis(X,\sM_0)$. But
\begin{multline*}
H^0(X,\sM_0)\simeq \Hom_{\HI}(h_0^\Nis(X),\sM_0)\\=\Hom_{\HI}(h_0^\Nis(X),\omega^\infty\sM_*)
\simeq \Hom_{\HI_*}(\sigma^\infty h_0^\Nis(X),\sM_*)
\end{multline*}
so that the image of $K^X$ in $\HI_*$ is $\sigma^\infty h_0^\Nis(X)$. Since $D_! y(h^\o(X)) = h_0^\Nis(X)$ by Corollary \ref{c1.1}, this concludes the proof.
\end{proof}

\subsection{Proof of Theorem \ref{t4.1}} In view of Proposition \ref{t5.2},  we have to show that the essential image of the composite functor $\HI^\o\by{i^\o}\HI\by{\sigma^\infty} \HI_*$
consists of those homotopic modules $\sF_*$ such that
\begin{thlist}
\item $\sF_n=0$ for $n<0$;
\item for any $n\ge 0$, the canonical map $\sF_0\otimes_{\HI} \bG_m^{\otimes n}\to \sF_n$ is an isomorphism.
\end{thlist} 

Any homotopic module in the essential image if $\sigma^\infty i^\o$ verifies (i) by Proposition \ref{p2.3} and (ii) by the above description of $\sigma^\infty$. For the converse, we use the fact that $i^\o$ has a right adjoint $R^0_\nr$ (Proposition \ref{p5.5}). Let $\sF_*\in \HI_*$ verify (i) and (ii): we must show that the counit map 
\[\sigma^\infty i^\o R^0_\nr \omega^\infty \sF_*\to \sF_*\]
is an isomorphism. We have $\omega^\infty\sF_*=\sF_0$; by Proposition \ref{p2.3}, $\sF_0\in\HI^\o$, hence $i^\o R^0_\nr\sF_0=\sF_0$ and the claim.
\qed

\section{Unramified cohomology}\label{s.unram}

\subsection{The functor $R_\nr$} In this section, we assume $F$ perfect. We study here the right adjoint to  $i^\o:\DM^\o\to\DM^\eff$ from Theorem \ref{c3.1}: this right adjoint is denoted by $R_\nr$.

 For any $C\in \DM^\eff$ and $q\in \Z$, we write $R^q_\nr C$ for
$\sH^q(R_\nr C)\in \HI^\o$, where $\sH^q$ corresponds to the homotopy
$t$-structure on $\DM^\o$.   By Theorem \ref{t3.2} and  Lemma \ref{lA.5}, $R_\nr$ is left exact with respect to the
homotopy $t$-structures of $\DM^\eff$ and $\DM^\o$. In particular,  $R^q_\nr \sF=0$ for $q<0$ if $\sF\in \HI$, and  Lemma \ref{lA.5} shows that the functor
\[\sF\mapsto R^0_\nr\sF\]
from $\HI$ to $\HI^\o$ is the right adjoint to the inclusion functor
$i^\o:\HI^\o\to \HI$ from Proposition \ref{p5.5}.

The main result of this section is Theorem \ref{t6.2}: for any $\sF\in\HI$, $R^0_\nr\sF$ coincides with the \emph{unramified part} of $\sF$ in the sense of classical unramified cohomology \cite{ct}. 

See \cite{rqnr} for computations of the ``higher derived functors of unramified cohomology" $R^q_\nr \sF$, for $q>0$ and certain $\sF$'s.

\subsection{The group $\sF_\nr(X)$} 

\begin{defn} Let $K/F$ be a finitely generated field extension.\\
a) A valuation  $v$  on $K$ (trivial on $F$) is \emph{divisorial} if  it is discrete of rank $1$ and its valuation ring is of the
form $O_{V,x}$, for $V$ a smooth $F$-scheme of finite type with function field $K$. (These are
the prime divisors of Zariski-Samuel, \cite[Ch. VI, \S 14]{zs}.)\\ 
b) For $\sF\in \HI$, we set
\[\sF_\nr(K/F)= \Ker\left(\sF(K)\by{(\partial_v)} \prod_v \sF_{-1}(F(v))\right)\]
where $v$ runs through all the divisorial valuations on $K$.\\
c) If $X$ is a smooth model of $K$, we set $\sF_\nr(X)=\sF_\nr(K/F)$. 
\end{defn}

 The exact sequence \eqref{eq6.1} implies:
 
 \begin{lemma}\label{l8.5} For any $X\in \Sm$, $\sF_\nr(X)\subseteq \sF(X)$.\qed
  \end{lemma}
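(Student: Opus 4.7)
The plan is to compare the two kernels: the one defining $\sF_\nr(X)$ and the one appearing in the exact sequence \eqref{eq6.1} of Theorem \ref{t2.1}. The key observation is that $\sF_\nr(X)$ is cut out by boundary conditions indexed by \emph{all} divisorial valuations on $K = F(X)$, whereas $\sF(X)$ is (by \eqref{eq6.1}) cut out by boundary conditions indexed only by the codimension-one points of the particular smooth model $X$.

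Concretely, first I would note that for each $x \in X^{(1)}$, the local ring $\sO_{X,x}$ is a DVR (since $X$ is smooth and $x$ is of codimension one), so the associated valuation $v_x$ on $K$ is divisorial in the sense of Definition~(a). Thus the indexing set $\{v_x : x \in X^{(1)}\}$ is a subset of the set of all divisorial valuations on $K$. Moreover, the boundary $\partial_{v_x}$ appearing in the definition of $\sF_\nr$ is, by construction of the purity isomorphism (Voevodsky, \cite[Lemma 4.36]{voepre}), the same as the map $\partial_x$ appearing in \eqref{eq6.1}.

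Next I would observe that $\sF_\nr(X)$ is the kernel of a map into a \emph{product} while \eqref{eq6.1} writes its kernel as that of a map into the \emph{direct sum} $\bigoplus_{x\in X^{(1)}} \sF_{-1}(F(x))$; since the direct sum injects into the corresponding product, the kernels agree, so
\[
\sF(X) \;=\; \Ker\!\left(\sF(K) \xrightarrow{(\partial_{v_x})} \prod_{x\in X^{(1)}} \sF_{-1}(F(x))\right).
\]
Restricting the product over all divisorial valuations to the product over the subset $\{v_x\}$ can only enlarge the kernel, so $\sF_\nr(X) \subseteq \sF(X)$.

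I do not anticipate any real obstacle here; the only point requiring a moment's care is the comparison of the two boundary maps, which is immediate from the definition of $\partial_v$ via purity. Note that the inclusion can be strict, since there are generally divisorial valuations on $K$ not arising from codimension-one points of the specific model $X$ (for instance those corresponding to exceptional divisors of blow-ups of $X$), but this plays no role in the lemma.
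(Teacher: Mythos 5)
Your proof is correct and is exactly the argument the paper intends: the paper's entire justification is the phrase ``The exact sequence \eqref{eq6.1} implies,'' i.e.\ Voevodsky's Gersten sequence identifies $\sF(X)$ with the kernel of $(\partial_x)_{x\in X^{(1)}}$, and since those $\partial_x$ form a subfamily of the $\partial_v$ over all divisorial valuations, the smaller kernel $\sF_\nr(X)$ sits inside $\sF(X)$. Your additional remarks (each $v_x$ is divisorial, direct sum versus product is immaterial for kernels) are the right points of care and nothing more is needed.
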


We are going to show that $\sF_\nr$ is preserved by finite correspondences. We begin with a series of
lemmas.

\begin{lemma}\label{l6.6} Let $f:Y\to X$ be a dominant morphism of smooth irreducible varieties.
Then
$f^*\sF_\nr(X)\subset \sF_\nr(Y)$.
\end{lemma}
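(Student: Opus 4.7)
Plan: I fix an element $\alpha \in \sF_\nr(X) \subseteq \sF(F(X))$ and an arbitrary divisorial valuation $w$ on $F(Y)$, aiming to show $\partial_w(f^*\alpha) = 0$. Realizing $B = \sO_w$ as $\sO_{Y'',y''}$ for some smooth model $Y''$ of $F(Y)$ and codimension-one point $y''$, I set $v := w|_{F(X)}$ and $A := B \cap F(X)$, so that $A$ is the valuation ring of $v$ in $F(X)$. The argument then splits on whether $v$ is trivial.

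If $v$ is trivial, then $F(X) \subseteq B$, so the map $\sF(F(X)) \to \sF(F(Y))$ factors through $\sF(B)$; the Gersten sequence \eqref{eq6.1} applied to the DVR $B$ immediately forces $\partial_w(f^*\alpha) = 0$.

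If $v$ is nontrivial, the key claim I would prove is that $v$ is itself divisorial on $F(X)$. Since the value group of $v$ is a nontrivial subgroup of $\Z$, $v$ is discrete of rank one. A transcendence-degree computation---combining Abhyankar's inequality applied to $v$ with the inequality $\trdeg(F(w)/F(v)) \le \trdeg(F(Y)/F(X))$ for residues of valued-field extensions (valid because the rational rank of $w$ over $v$ vanishes) and with the fact that $\trdeg(F(w)/F) = \trdeg(F(Y)/F) - 1$ (since $w$ is divisorial)---pins down $\trdeg(F(v)/F) = \trdeg(F(X)/F) - 1$. Thus $v$ is Abhyankar with free value group $\Z$, and Abhyankar's structure theorem then identifies $v$ as divisorial: $A = \sO_{X',x'}$ for some smooth model $X'$ of $F(X)$ and codimension-one point $x'$. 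By hypothesis $\partial_v \alpha = 0$, so \eqref{eq6.1} applied to the DVR $A$ yields $\alpha \in \sF(A)$. Functoriality under the local inclusion $A \hookrightarrow B$ (both essentially smooth local $F$-schemes) then carries $\alpha$ to $f^*\alpha \in \sF(B)$, whence $\partial_w(f^*\alpha) = 0$.

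The main obstacle is the implication ``$v$ nontrivial implies $v$ divisorial''. It relies on two classical results from valuation theory that are not stated earlier in the excerpt: Abhyankar's inequality for residue fields of valued-field extensions, and Abhyankar's structure theorem identifying rank-one discrete Abhyankar valuations on finitely generated extensions of a perfect field with divisorial valuations (the perfectness is used to pass from regular to smooth local rings). Both can be found, for instance, in Zariski--Samuel, Volume II, Chapter~VI. Once granted these inputs, the proof reduces to the clean case split above, the remainder being formal functoriality of $\sF$ and the Gersten resolution from Theorem~\ref{t2.1}.
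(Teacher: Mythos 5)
Your proof is correct, but it takes a genuinely different route from the paper's. The paper disposes of the lemma in one line by invoking the compatibility of residues with restriction along the field extension $K\hookrightarrow L$: it writes $\partial_w(f^*\alpha)=e\,f^*\partial_v(\alpha)$, with $e$ the ramification index when $v=w|_K$ is nontrivial and $e=0$ when $v$ is trivial, and justifies this formula by the purity isomorphism of \cite[Lemma 4.36]{voepre} or by D\'eglise's generic motives. You avoid any such residue-compatibility formula: in the nontrivial case you use the (co)limit form of the Gersten sequence \eqref{eq6.1} to lift $\alpha$ into $\sF(\sO_v)$ and then push forward along the local inclusion $\sO_v\hookrightarrow\sO_w$ of essentially smooth local rings, which trivially forces $\partial_w(f^*\alpha)=0$. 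The price is that you must verify that $v$, when nontrivial, is itself divisorial; your Abhyankar-inequality computation does this correctly (and perfectness of $F$, assumed throughout the section, is what upgrades the normal model of Zariski--Samuel to a smooth one, consistent with the paper's own identification of divisorial valuations with prime divisors). Note that the paper's argument tacitly needs the very same fact --- $\partial_v(\alpha)$ is only known to vanish because $v$ is one of the divisorial valuations entering the definition of $\sF_\nr(X)$ --- so your proof has the merit of making explicit a point the paper glosses over, at the cost of importing the two classical valuation-theoretic inputs you name. Minor housekeeping: the identification $\sF(\sO_v)=\Ker\partial_v$ is obtained from \eqref{eq6.1} by a filtered colimit over open neighbourhoods of the centre (only the component at the centre survives in the colimit of the direct sums), and the map $\sF(\sO_v)\to\sF(\sO_w)$ exists because any $F$-algebra map from a finitely generated model of $\sO_v$ into $\sO_w=\colim\sO(V)$ factors through some $\sO(V)$; both points are routine but worth a sentence.
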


\begin{proof} Let $\alpha\in \sF_\nr(X)$, and let $w$ be a divisorial valuation on the
function field $L$ of
$Y$. Since $f$ is dominant, $L$ is an extension of $K$. Let $v$ be
the restriction of $w$ to $K$. Then 
\[\partial_w(f^*\alpha) = ef^*\partial_v(\alpha)\]
where $e$ is the ramification index if $v$ is nontrivial and $e=0$ if $v$ is trivial; we simply write $f:\Spec F(w)\to \Spec F(v)$ for the map
induced by $f$. This formula follows from examining the purity isomorphism of
\cite[Lemma 4.36]{voepre}. It also follows from  D\'eglise's theory of generic motives \cite[lemme 5.4.7]{motgen}, since the residues $\partial $ may be expressed in their terms \cite[3.1]{modhot}.
\end{proof}

\begin{lemma}\label{l6.7} Let $f:X\to Y$ be a finite surjective morphism of smooth
$F$-varieties. Let
${}^t f\in c(Y,X)$ be the transpose of its graph. Then $({}^t f)^*\sF_\nr(X)\subseteq
\sF_\nr(Y)$.
\end{lemma}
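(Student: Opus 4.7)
The plan is to mimic the strategy of Lemma~\ref{l6.6}, replacing the simple formula there by a residue formula for the transfer correspondence. Fix $\alpha \in \sF_\nr(X)$ and a divisorial valuation $w$ on $F(Y)$; I must show $\partial_w(({}^t f)^*\alpha) = 0$. First I locate the extensions of $w$ to $F(X)$: choose a smooth model $V$ of $F(Y)$ with a codimension-one point $y$ such that $\sO_{V,y}$ is the valuation ring of $w$, and let $\tilde V$ be the normalization of $V$ in $F(X)$. Since $F(X)/F(Y)$ is finite and $V$ is excellent, $\tilde V \to V$ is a finite morphism, so the preimages $z_1,\dots,z_r$ of $y$ are codimension-one points of the normal scheme $\tilde V$; each $\sO_{\tilde V,z_i}$ is a DVR, giving a valuation $v_i$ on $F(X)$, and the $v_i$ are precisely the extensions of $w$.

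Second, these $v_i$ are themselves divisorial: the scheme $\tilde V$ is normal and $z_i$ is codimension one, hence $\tilde V$ is regular at $z_i$; as $F$ is perfect, regularity equals smoothness, so $z_i$ lies in the smooth locus of $\tilde V$ and thus $v_i$ arises from a smooth model. Because $\alpha \in \sF_\nr(X)$, this yields $\partial_{v_i}(\alpha) = 0$ for every $i$.

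It remains to establish the residue formula
\[
\partial_w\bigl(({}^t f)^*\alpha\bigr) \;=\; \sum_{i=1}^{r} e_i\cdot ({}^t g_i)^*\partial_{v_i}(\alpha),
\]
where $e_i$ is the ramification index of $v_i$ over $w$ and $g_i\colon \Spec F(v_i) \to \Spec F(w)$ corresponds to the finite extension of residue fields; once this is in hand, the right-hand side vanishes and the lemma follows. I expect this residue formula to be the main obstacle. My approach is to establish it at the level of D\'eglise's generic motives \cite{motgen}, where residues are built in \cite[\S 3.1]{modhot}: after restricting to the semi-localisation of $\tilde V \to V$ at $\{z_1,\dots,z_r\}$ over $y$, the transpose correspondence ${}^t f$ becomes a finite correspondence between one-dimensional regular local schemes, and the displayed identity falls out from Voevodsky's purity isomorphism \cite[Lemma 4.36]{voepre} combined with the projection formula, the factor $e_i$ accounting for the discrepancy between uniformisers at $w$ and at $v_i$. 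An equivalent route is to observe that, via D\'eglise's equivalence between homotopic modules and cycle modules \cite{modhot}, this identity is the standard Rost axiom \cite{rost} expressing compatibility of corestriction with residues for a finite extension of discretely valued fields.
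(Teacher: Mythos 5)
Your strategy is essentially the paper's: the paper's entire proof consists of quoting the residue--corestriction formula $\partial_v(({}^t f)^*\alpha)=\sum_{w\mid v}({}^t f)^*\partial_w(\alpha)$ from D\'eglise \cite[lemme 5.4.7]{motgen}, which is exactly the cycle-module axiom (Rost's R3b) that you invoke as your ``equivalent route''. Your normalisation argument showing that the extensions $v_i$ of $w$ to $F(X)$ are again divisorial is correct and is a point the paper leaves implicit, so including it is a genuine improvement in completeness.

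One inaccuracy: your displayed formula should \emph{not} contain the ramification indices $e_i$. For corestriction the correct identity is $\partial_w(({}^t f)^*\alpha)=\sum_i ({}^t g_i)^*\partial_{v_i}(\alpha)$; the factor $e$ belongs to the restriction formula of Lemma \ref{l6.6} (Rost's R3a), not to R3b. (Test on $\sF=\bG_m$: $w(N_{E/L}(x))=\sum_i f_i\,v_i(x)$, where the residue degree $f_i$ is exactly the transfer on $\sF_{-1}=\Z$, with no $e_i$.) Since you only need each term to vanish, this slip does not affect the conclusion, but if you actually carry out the derivation from purity and the projection formula that you announce as ``the main obstacle'', the $e_i$ will not appear, and insisting on it would derail the computation.
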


\begin{proof} Here, $K=F(X)$ is a finite extension of $L=F(Y)$. Let $\alpha\in \sF_\nr(X)$, and
let $v$ be a divisorial valuation of $L$. Then
\[\partial_v(({}^t f)^*\alpha) = \sum_{w\mid v} ({}^t f)^*\partial_w(\alpha)\]
\cite[lemme 5.4.7]{motgen}.
\end{proof}

We need the following version of a theorem of Knaf-Kuhlmann  \cite[Th. 1.1]{KK}.\footnote{Its proof obviously extends to compositions of more than $2$ divisorial valuations.}

\begin{prop}\label{l6.8} Let $K/F$ be a function field, $w$ a divisorial valuation of $K$ with residue field $L$, $v$ a divisorial valuation of $L$. Then there is a closed immersion $i:Z\inj W$  of smooth $F$-varieties such that  $K\simeq F(W)$, $L\simeq F(Z)$, $\sO_{W,Z}\simeq \sO_w$, and $v$ is finite on $Z$.
\end{prop}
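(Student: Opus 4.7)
The plan is to deduce the proposition from a single application of Knaf-Kuhlmann \cite[Th.~1.1]{KK}, applied not to $w$ alone but to the composite rank-$2$ valuation that records $w$ and $v$ simultaneously. Concretely, form the valuation $u$ on $K$ with ring
\[\sO_u = \{f\in \sO_w : \bar f \in \sO_v\}\subset K,\]
where $\bar f$ denotes the image in $L=\sO_w/\mathfrak m_w$: this has rank $2$, value group $\Gamma_w\oplus\Gamma_v\simeq \Z\oplus\Z$ (lex order), and residue field $F(v)$. It is Abhyankar because
\[\mathrm{rk}(u)+\trdeg_F F(v) = 2 + (\trdeg_F L -1) = \trdeg_F K,\]
saturating Abhyankar's inequality, and its residue field extension $F(v)/F$ is separable since $F$ is perfect.

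Applying \cite[Th.~1.1]{KK} to $u$ yields a smooth model $W/F$ of $K$ together with a smooth closed subscheme $Y\subset W$ of codimension $2$ such that $\sO_{W,Y}$ is dominated by $\sO_u$. Inspecting their construction (whose key output is a \emph{monomialisation} of the valuation at the centre), one may choose regular parameters $x,y$ of $\sO_{W,Y}$ whose $u$-values are the standard basis of $\Z\oplus\Z$, equivalently
\[w(x)=1,\qquad w(y)=0,\qquad v(\bar y)=1,\]
and moreover the residue field at $Y$ is identified with $F(v)$.

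After shrinking $W$ around $Y$ so that $V(x)$ is irreducible, take $Z\subset W$ to be this vanishing locus. Since $x$ belongs to a regular system of parameters at $Y$, $Z$ is smooth of codimension one. At its generic point $\sO_{W,Z}$ is a DVR of $K$ with uniformiser $x$ and residue field $L$, which identifies it with $\sO_w$ and forces $F(Z)=L$. Inside $Z$, the image $\bar y$ is a regular parameter at the codimension-one point $Y$, so $\sO_{Z,Y}$ is a DVR of $L$ with uniformiser $\bar y$ and residue field $F(v)$; as $v(\bar y)=1$ this forces $\sO_{Z,Y}=\sO_v$, giving the nontrivial centre of $v$ on $Z$, which in particular makes $v$ finite on $Z$.

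The main obstacle is the refinement used in the second paragraph: beyond the mere existence of a smooth centre, one needs regular parameters at that centre whose $u$-values form a $\Z$-basis of $\Gamma_u$. This has to be extracted from the constructive proof in \cite{KK}, where it appears as the local monomialisation step for Abhyankar valuations, rather than from the bare statement of their Theorem 1.1; this is also precisely why, as the footnote observes, the argument extends without modification to longer compositions of divisorial valuations.
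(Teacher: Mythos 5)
Your proof is correct and follows essentially the same route as the paper: form the composite rank-$2$ Abhyankar valuation $u$, apply Knaf--Kuhlmann, use the monomialisation to get regular parameters $(x,y)$ with $u$-values $(1,0)$ and $(0,1)$, and take $Z=V(x)$. The only difference is in the last paragraph: you do not need to dig into the construction of \cite{KK}, because their Theorem~1.1 already allows one to prescribe a finite set of elements of $\sO_u$ to be monomialised --- the paper simply picks $a,b\in\sO_u$ with $u(a)=(1,0)$, $u(b)=(0,1)$ in advance, applies the theorem to $\{a,b\}$, and deduces that the resulting regular system of parameters has these values up to permutation.
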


\begin{proof} Let $u$ be the composite valuation, which is discrete of rank $2$ (here we identify valuations with the associated surjective places): it is an Abyankhar place in the sense of \cite{KK}. The residue field of $u$ is separably generated since $F$ is perfect. Let $\sO_u\subset K$ be the local ring of $u$ and $a,b\in \sO_u$ be two elements such that $u(a)=(1,0)$ and $u(b)=(0,1)$.  By  \cite[Th. 1.1]{KK}, there is a smooth model $W$ of $K$ on which $u$ has a centre $t$ of codimension $2$, and such that $a,b$ are $\sO_{W,t}$-monomials in $a_1,a_2$ (in the sense of \cite[p. 234]{KK}) for some regular system of parameters $(a_1,a_2)$ of $\sO_{W,t}$. This easily implies that $u(a_1)=(1,0)$ and $u(a_2)=(0,1)$, up to permuting $(a_1,a_2)$. Then $\sO_{W,t}/a_1\sO_{X,t}$ is regular \cite[Ch. VIII, \S 11, Th. 26]{zs}, and is the local ring of $t$ on $Z$, the closure of the centre of $w$ on $W$. Thus $Z$ is regular at $t$, hence smooth around $t$ since $F$ is perfect, and we can make it smooth by shrinking $W$.
\end{proof}

\begin{lemma}[Main lemma]\label{l6.9} Let $i:Y\to X$ be a closed immersion of codimension $1$.
Then
$i^*\sF_\nr(X)\subseteq \sF_\nr(Y)$.
\end{lemma}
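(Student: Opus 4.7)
The plan is to reduce, via Proposition \ref{l6.8}, to a situation where the valuation $w$ is visible as a codimension-$1$ subvariety of a smooth model of $Y$, and then to bootstrap the unramifiedness of $\alpha$ through two applications of Voevodsky's Gersten resolution (Theorem \ref{t2.1}). Concretely, fix $\alpha \in \sF_{\nr}(X)$ and a divisorial valuation $w$ on $F(Y)$; I must show $\partial_w(i^*\alpha) = 0$. Let $v$ be the divisorial valuation on $F(X)$ with valuation ring $\sO_{X, \eta_Y}$, so that $F(v) = F(Y)$; the composite of $v$ and $w$ is then a rank-$2$ valuation $u$ on $F(X)$ with residue field $F(w)$. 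Applying Proposition \ref{l6.8} to $u$ yields a closed immersion $i' : Z \inj W$ of smooth $F$-varieties such that $F(W) \simeq F(X)$, $F(Z) \simeq F(Y)$, $\sO_{W, Z} \simeq \sO_v$, and $w$ is realised on $Z$ by a codimension-$1$ point $t \in Z^{(1)}$ with $\sO_{Z, t} \simeq \sO_w$.

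Next I would argue that $\alpha$, viewed inside $\sF(F(W)) = \sF(F(X))$, actually lies in the subgroup $\sF(W)$. This is exactly where the unramifiedness hypothesis enters: every $x \in W^{(1)}$ defines a divisorial valuation on $F(X)$, so $\partial_x \alpha = 0$ by definition of $\sF_{\nr}(X)$; the Gersten sequence \eqref{eq6.1} applied to $W$ thus places $\alpha$ in $\sF(W)$. Restricting along $i'$ gives $(i')^* \alpha \in \sF(Z)$, and a second application of Gersten on the smooth variety $Z$ forces all residues at points of $Z^{(1)}$ to vanish; in particular $\partial_t((i')^* \alpha) = 0$, which, via $\sO_{Z, t} \simeq \sO_w$, reads $\partial_w((i')^* \alpha) = 0$.

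It remains to compare $i^* \alpha$ and $(i')^* \alpha$ as elements of $\sF(F(Y)) = \sF(F(Z))$. Both are computed as the image of $\alpha$ under the canonical restriction $\sF(\sO_v) \to \sF(F(v))$: in the $(X, Y)$ picture, $\sF(\sO_v)$ is $\colim_{U \ni \eta_Y} \sF(U)$ over smooth opens of $X$ and $\alpha$ maps there through $\sF(X) \to \sF(\sO_v)$, while in the $(W, Z)$ picture the same group arises as $\colim_{V \ni \eta_Z} \sF(V)$ with $\alpha$ mapping there through $\sF(W) \to \sF(\sO_v)$. Since both presentations yield the same essentially smooth local ring $\sO_v$ and $\alpha$ represents the same class in each, the two pullbacks agree at the generic point of $Y$, whence $\partial_w(i^* \alpha) = \partial_w((i')^* \alpha) = 0$.

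The main obstacle I expect is precisely this last comparison: the birational equivalence $X \ttto W$ is not a morphism, so one cannot invoke the naturality of pullbacks along morphisms directly; everything must be routed through the common essentially smooth local scheme $\Spec \sO_v$. This is why Proposition \ref{l6.8} is formulated with the strong identification $\sO_{W, Z} \simeq \sO_v$ (rather than as a mere birational statement), and it is what forces the passage to the composite rank-$2$ valuation $u$ in the first place, rather than an attempt to compare residues on $X$ and $Y$ directly.
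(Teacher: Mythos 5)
Your proof is correct and follows essentially the same route as the paper: pass to the composite rank-$2$ valuation, invoke Proposition \ref{l6.8} to get the model $(W,Z)$, apply the Gersten sequence \eqref{eq6.1} twice, and identify $i^*\alpha$ with $(i')^*\alpha$. The only (cosmetic) difference is in that last identification: the paper makes it via an explicit common open neighbourhood $U$ of $\Spec\sO_v$ in $X$ and $W$ and a common open $U'$ of $Y$ and $Z$, which is exactly what your appeal to the intrinsic specialisation map $\sF(\sO_v)\to\sF(F(v))$ unwinds to.
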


\begin{proof} Let $v$ be a divisorial valuation of $F(Y)$, and let  $w$ be the divisorial valuation on $K=F(X)$ defined by $i:Y\to X$.  Let $u$ be the composite valuation, and let $(W,Z)$ be as in the conclusion of Proposition \ref{l6.8}. We thus have two closed immersions of codimension $1$:
\begin{align*}
i:&Y\to X\\
i':&Z\to W.
\end{align*}

Since $w$ is finite on $X$ and $W$, they share an open neighbourhood $U$ of $\Spec O_w$. Let $T$ be the closure of the centre of $w$ in $U$. Since $Y,Z$ and $T$ are birational, they share a nonempty open subset $U'$. We now have the following situation:
\[\begin{CD}
Y@>i>> X\\
@AAA @AAA\\
U'@>i''>> U\\
@VVV @VVV \\
Z@>i'>> W
\end{CD}\]
where all varieties are smooth, vertical maps are open immersions, $i$ and $i'$ are closed immersions while $i''$ is locally closed. This in turn gives a commutative diagram
\[\begin{CD}
\sF(Y)@<i^*<< \sF_\nr(X)\\
\bigcap && || \\
\sF(U')@<{i''}^*<< \sF_\nr(U)\\
\bigcup&& ||\\
\sF(Z)@<{i'}^*<< \sF_\nr(W).
\end{CD}\]

Here we identified the three left-hand terms with subgroups of $\sF(L)$ thanks to \eqref{eq6.1}, where $L=F(Y)$. Let $\alpha\in \sF_\nr(X)$. The diagram shows that $i^*\alpha\in \sF(Y)$ lies in $\sF(Z)$. Since $v$ is finite on $Z$, we have $\partial_v(i^*\alpha)=0$ by \eqref{eq6.1} applied to $Z$. Since $v$ was an arbitrary divisorial valuation of $L$, this shows that $i^*\alpha$ is unramified, as requested.
\end{proof}

\subsection{$\sF_\nr$ and $R^0_\nr\sF$} The following theorem justifies the notation $R_\nr\sF$:

\begin{thm}\label{t6.2} Let $\sF\in \HI$. Let $X\in \Sm$ be  irreducible with function field $K$. Then
there is a natural isomorphism
\[R^0_\nr\sF(X) = \sF_\nr(X). \]
\end{thm}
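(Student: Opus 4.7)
The plan is to prove this by mutually embedding $R^0_\nr\sF$ and $\sF_\nr$ as subsheaves of $\sF$, using Proposition \ref{p5.5} which (under perfectness of $F$) realises $R^0_\nr\sF \hookrightarrow \sF$ as a monomorphism in $\HI$.

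\textbf{Easy inclusion} $R^0_\nr\sF(X)\subseteq \sF_\nr(X)$. I would first observe that Voevodsky's residue $\partial_v$ arising from Theorem \ref{t2.1} is natural in the sheaf $\sF\in\HI$ (it is constructed from the purity isomorphism of \cite[Lemma 4.36]{voepre}). Thus for $K=F(X)$ and any divisorial valuation $v$ on $K$, the square
\begin{equation*}
\begin{CD}
R^0_\nr\sF(K) @>\partial_v>> (R^0_\nr\sF)_{-1}(F(v)) \\
@VVV @VVV \\
\sF(K) @>\partial_v>> \sF_{-1}(F(v))
\end{CD}
\end{equation*}
commutes. Since $R^0_\nr\sF\in\HI^\o$, Proposition \ref{p2.3} (iii) gives $(R^0_\nr\sF)_{-1}=0$, so every section of $R^0_\nr\sF(X)$, viewed in $\sF(K)$, has vanishing residue at every divisorial $v$; combined with Lemma \ref{l8.5} this gives the inclusion.

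\textbf{Hard inclusion} $\sF_\nr(X)\subseteq R^0_\nr\sF(X)$. I would show that the subfunctor $\sG:X\mapsto \sF_\nr(X)$ of $\sF$ belongs to $\HI^\o$; by the adjunction $i^\o\dashv R^0_\nr$, the inclusion $\sG\hookrightarrow\sF$ then factors uniquely through $R^0_\nr\sF$, yielding the result. Three facts must be established. First, $\sG$ is a presheaf with transfers: this is the content of Lemmas \ref{l6.6}, \ref{l6.7}, \ref{l6.9} (with Proposition \ref{l6.8} in reserve), once one shows that every finite correspondence $\alpha\in c(Y,X)$ decomposes into pullbacks along dominant morphisms, transposes of finite surjective morphisms, and codimension-one Gysin maps along smooth closed immersions. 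Second, $\sG$ is birationally invariant: for a dense open $U\hookrightarrow X$, the inclusion $\sG(X)\subseteq\sG(U)$ is immediate since $F(X)=F(U)$; conversely, a section $s\in\sG(U)\subseteq\sF(K)$ has vanishing residues at \emph{all} divisorial valuations, in particular at the codimension-one points of $X\setminus U$, so the Gersten sequence \eqref{eq6.1} forces $s\in\sF(X)$, hence $s\in\sG(X)$. Third, $\sG$ is then automatically a Nisnevich sheaf by Lemma \ref{l14.0}(a) and homotopy invariant by Proposition \ref{l14.1}(a), placing it in $\HI^\o$.

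\textbf{Main obstacle}: establishing the transfer structure of $\sG$. The three lemmas handle morphisms whose sources or targets are smooth, but a general integral support $W\subseteq Y\times X$ of a finite correspondence may well be singular, and its projection to $X$ need not be dominant. The plan here is to factor the action of $W$ by first applying a de Jong alteration $\tilde W\to W$ to smoothen the support, using Lemma \ref{l6.7} to handle the resulting finite surjective map $\tilde W\to Y$, and then using Lemmas \ref{l6.6} and \ref{l6.9} (the latter iterated via a flag of codimension-one closed immersions on a compactification of the image of $\tilde W\to X$, built via Proposition \ref{l6.8}) to control the remaining part. Threading these operations together, while respecting the multiplicities coming from the alteration degree, is the delicate step of the proof.
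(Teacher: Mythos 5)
Your overall architecture is exactly the paper's: identify both sides as subgroups of $\sF(X)$, prove $R^0_\nr\sF(X)\subseteq \sF_\nr(X)$, and prove $\sF_\nr\in\HI^\o$ so that the adjunction $i^\o\dashv R^0_\nr$ forces the reverse inclusion. Your easy inclusion via naturality of $\partial_v$ in $\sF$ together with $(R^0_\nr\sF)_{-1}=0$ is a valid variant of the paper's argument (which instead passes to a common open subset of $X$ and a smooth model of $v$, using birational invariance of $R^0_\nr\sF$ and the Gersten complex); both are fine.

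The genuine gap is in the step you correctly single out as delicate: extending the three lemmas to an arbitrary finite correspondence $\phi\in c(Y,X)$ with irreducible support $Z\subseteq Y\times X$. The alteration strategy does not work integrally: an alteration $\tilde Z\to Z$ of generic degree $d$ lets you control only $d\cdot\phi^*\alpha$, and since $\sF_\nr(Y)$ is merely a subgroup of $\sF(F(Y))$ you cannot deduce $\phi^*\alpha\in\sF_\nr(Y)$ from $d\cdot\phi^*\alpha\in\sF_\nr(Y)$ (even Gabber's prime-to-$\ell$ alterations would leave a power of $p$). Moreover no desingularisation of $Z$ is needed. The paper's resolution is: since $\sF_\nr(Y)$ depends only on $F(Y)$, one may shrink $Y$ to a dense open $U$ over which $p^{-1}(U)\subseteq Z$ is smooth ($p:Z\to Y$ being finite surjective and $F$ perfect). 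One then factors $\phi|_U$ in $\SmCor$ as
\[\pi\circ\gamma_k\circ{}^tp,\]
where ${}^tp\in c(U,p^{-1}(U))$ is the transpose of the finite surjective morphism $p^{-1}(U)\to U$ (Lemma \ref{l6.7}), $\gamma_k:p^{-1}(U)\to p^{-1}(U)\times Y\times X$ is the graph of the locally closed immersion $k:p^{-1}(U)\to Y\times X$ --- a regular closed embedding of smooth varieties, hence locally a composite of codimension-one smooth closed immersions (Lemma \ref{l6.9}, iterated) --- and $\pi$ is the product projection to $X$, which is dominant even when $Z\to X$ is not (Lemma \ref{l6.6}). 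This routes the possibly non-dominant, singular correspondence through a smooth ambient product and dispenses with alterations and multiplicities altogether; Proposition \ref{l6.8} is used only inside the proof of Lemma \ref{l6.9}, not to build flags on compactifications as you suggest.
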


\begin{proof} Recall that, by Proposition \ref{p5.5}, the counit map
\[i^\o R_\nr^0\sF\to \sF\]
is a monomorphism. By Lemma \ref{l8.5}, we may thus identify both groups with subgroups of $\sF(X)$. We proceed in two steps:

\begin{enumerate}
\item $R^0_\nr\sF(X)\subset \sF_\nr(X)$.
\item $X\mapsto \sF_\nr(X)$ defines an object of $\HI^\o$.
\end{enumerate}

Granting (1) and (2), the theorem follows from the universal property of $R^0_\nr\sF$.

(1) Let $\alpha\in R^0_\nr\sF(X)$, and let $v$ be a divisorial valuation of $K$. We want to
show that $\partial_v(\alpha)=0$. Choose a smooth model $V$ of $K$ on which $v$ is finite, with centre
a point $x$ of codimension $1$. Let $U$ be a common open subset of $X$ and $V$. Since
$R^0_\nr\sF\in \HI^\o$, we have isomorphisms
\[R^0_\nr\sF(X)\osi R^0_\nr\sF(U)\iso R^0_\nr\sF(V)\subset \sF(V)\]
and the claim follows from the complex \eqref{eq6.1}.

(2) By Proposition \ref{l14.1} a), it suffices to show that $\sF_\nr$ defines a sub-presheaf with transfers of $\sF$. Let  $\phi\in
c(Y,X)$ be a finite correspondence, with $Y$ smooth irreducible. We have to show that
$\phi^*\sF_\nr(X)\subseteq \sF_\nr(Y)$. For this, we may assume that $\phi$ is
defined by an irreducible subset $Z\subset Y\times X$.

Let $p:Z\to Y$ be the projection. There is a nonempty open subset $U\subseteq Y$ such that
$p^{-1}(U)$ is smooth. The transpose of the graph of $p_{|p^{-1}(U)}$ defines a finite
correspondence ${}^t p\in c(U,p^{-1}(U))$. Let $k$ be the immersion
$p^{-1}(U)\to Z\to Y\times X$ and $\gamma_k:p^{-1}(U)\to p^{-1}(U)\times Y\times X$ be the
associated graph map. Then the diagram
\[\xymatrix{
p^{-1}(U)\ar[r]^{\gamma_k} &p^{-1}(U)\times Y\times X\ar[dd]^\pi\\
U\ar[u]^{{}^t p}\ar[d]\\
Y\ar[r]^\phi & X
}\]
commutes in $\SmCor$. Note that $\gamma_k$ is a regular embedding, hence may be locally written as a composition of closed embeddings of smooth varieties, of relative dimension $1$. Lemmas \ref{l6.6}, \ref{l6.9} and \ref{l6.7} then respectively show that $\pi^*$, $\gamma_k^*$ and $({}^tp)^*$ respect unramified elements, and thus so does $\phi^*$.
\end{proof}

\begin{cor} Let $\sF\in \HI$. Suppose that a smooth variety $X$ has a smooth
compactification $\bar X$. Then 
$R^0_\nr\sF(X)$ is given by the formula
\[R^0_\nr\sF(X)=\sF(\bar X).\]
\end{cor}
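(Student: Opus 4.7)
The plan is to combine Theorem~\ref{t6.2} with the Gersten exact sequence~\eqref{eq6.1} applied to $\bar X$. Set $K=F(X)=F(\bar X)$. By Theorem~\ref{t6.2} and the birational invariance of $R^0_\nr\sF\in\HI^\o$, we have
\[R^0_\nr\sF(X)=\sF_\nr(X)=\sF_\nr(\bar X),\]
so it suffices to identify the subgroup $\sF_\nr(\bar X)\subseteq \sF(K)$ with $\sF(\bar X)$ (viewed as a subgroup of $\sF(K)$ via the injection coming from~\eqref{eq6.1}).

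For the inclusion $\sF_\nr(\bar X)\subseteq\sF(\bar X)$, I would simply observe that every codimension-one point $z\in\bar X^{(1)}$ defines a divisorial valuation $v_z$ on $K$, so the vanishing of all residues $\partial_v(\alpha)$ in particular gives $\partial_z(\alpha)=0$ for every such $z$, and therefore $\alpha\in\sF(\bar X)$ by the Gersten sequence for $\bar X$.

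The only real content is the reverse inclusion $\sF(\bar X)\subseteq\sF_\nr(\bar X)$. Fix $\alpha\in\sF(\bar X)$ and an arbitrary divisorial valuation $v$ of $K$; I need to show $\partial_v(\alpha)=0$. Here I would invoke properness of $\bar X$: by the valuative criterion, $v$ admits a (unique) center $z\in\bar X$, so $\sO_{\bar X,z}\subseteq \sO_v$ as subrings of $K$. Both local rings are pro-smooth over $F$, and the inclusion gives a morphism $\Spec\sO_v\to\Spec\sO_{\bar X,z}$. Extending $\sF$ to such semi-local rings by direct limits, we get functorial restriction maps
\[\sF(\bar X)\to\sF(\sO_{\bar X,z})\to \sF(\sO_v)\inj\sF(K),\]
whose composition with $\sF(\sO_v)\inj\sF(K)$ recovers the inclusion $\sF(\bar X)\inj\sF(K)$. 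Thus the image of $\alpha$ in $\sF(K)$ actually lies in $\sF(\sO_v)$. Since $\sO_v$ is the local ring of some smooth model $V$ of $K$ at a codimension-one point $x$, the Gersten sequence~\eqref{eq6.1} applied to $V$ localised at $x$ gives $\sF(\sO_v)=\ker(\sF(K)\by{\partial_v}\sF_{-1}(F(v)))$, whence $\partial_v(\alpha)=0$, as desired.

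The main obstacle is this last step, and it reduces cleanly to two classical ingredients: the valuative criterion of properness (to find the center $z$ of $v$ on $\bar X$) and the fact that a homotopy invariant Nisnevich sheaf with transfers on a DVR arising from a smooth model computes exactly the kernel of the residue map (which is built into~\eqref{eq6.1}). No resolution of singularities or appeal to Knaf--Kuhlmann is needed here, since the existence of a smooth compactification is assumed and the dominance $\sO_{\bar X,z}\subseteq\sO_v$ is automatic from properness.
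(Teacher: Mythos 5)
Your proof is correct and follows the same route as the paper: Theorem \ref{t6.2} reduces the statement to the identity $\sF_\nr(X)=\sF(\bar X)$, which the paper then simply quotes as a classical fact (Prop.\ 2.1.8 e) of \cite{ct}), remarking that the required codimension-$1$ purity holds by Theorem \ref{t2.1}. Your valuative-criterion argument for the inclusion $\sF(\bar X)\subseteq\sF_\nr(\bar X)$, together with the Gersten sequence \eqref{eq6.1} for the easy inclusion, is exactly the standard proof of that classical statement, so you have correctly unpacked the citation rather than taken a genuinely different path.
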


\begin{proof} By  Theorem \ref{t6.2}, we may replace $R^0_\nr\sF(X)$ by $\sF_\nr(X)$; the conclusion is then classical \cite[Prop. 2.1.8 e)]{ct}. Note that the ``codimension $1$ purity'' hypothesis is satisfied in view of Theorem \ref{t2.1}. \end{proof}

\appendix

\section{Localisation, Brown representability and $t$-structures}

In this appendix, we collect technical results on triangulated categories which are used in the main body of the paper.   \S \ref{r6.1} recollects results of Neeman on compactly generated triangulated categories \cite{neeman,neeman2}, also revisited by Beilinson-Vologodsky \cite{be-vo}; the existence of the functor $R_\nr$ rests on Theorem \ref{neem3} (v). \S \ref{s.loc} recalls the behaviour of $t$-structures under adjoints and localisations. The main result is Theorem \ref{tC.1}, from which Theorem \ref{t3.0} is deduced almost directly.

\subsection{Terminology} \label{s.term} It is worthwhile to first recall and fix some terminology
on triangulated categories: we follow Neeman in \cite{neeman2}. As has become widespread, we replace the old terminology ``exact functor'' by ``triangulated functor'', and ``distinguished triangle'' by ``exact triangle''.

Let $\sT$ be a triangulated category. Recall that a strictly full subcategory $\sS$ of $\sT$ is \emph{triangulated} if it is additive and closed under the formation of shifts and cones. Then one can define the \emph{Verdier quotient} $\sT/\sS$ \cite{verd}: this is a triangulated category which comes with a triangulated functor $\sT\to \sT/\sS$, universal among triangulated functors sending all objects of $\sS$ to $0$ \cite[Cor. II.2.2.11 c)]{verd}. We say that a triangulated functor $T:\sT\to \sU$ is a \emph{localisation} if it induces an equivalence of categories $\sT/T^{-1}(0)\iso \sU$. A triangulated subcategory $\sS$ of $\sT$
is \emph{thick} (\resp \emph{localising}) if it is stable under representable direct
summands (\resp and under representable direct sums). If $\sS^\natural\subseteq \sT$ is the smallest thick subcategory of $\sT$ containing $\sS$, the functor $\sT/\sS\to \sT/\sS^\natural$ is an equivalence of categories \cite[Cor. II.2.2.11 a)]{verd}. We have:

\begin{lemma}[\protect{\cite[Cor. 3.2.11]{neeman2}}]\label{lA.2} Let $\sT$ be a triangulated category with small direct sums. Let $\sS$ be a localising subcategory. Then $\sT/\sS$ has small direct sums, and the universal functor $\sT \to \sT/\sS$ preserves coproducts.
\end{lemma}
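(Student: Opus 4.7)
The plan is to prove directly that the coproduct $\bigoplus_i X_i$ taken in $\sT$, equipped with the canonical inclusions $\iota_i$ viewed in $\sT/\sS$, is a coproduct in $\sT/\sS$; the fact that the quotient functor preserves coproducts will then follow immediately from the universal property. I will use Verdier's calculus of fractions, which computes $\Hom_{\sT/\sS}(X,Y)$ as the filtered colimit of $\Hom_\sT(X',Y)$ (respectively $\Hom_\sT(X,Y')$) over morphisms $X'\to X$ (respectively $Y\to Y'$) with cone in $\sS$. The goal is to show that, for every $Y\in\sT$, the map
\[\Phi\colon \Hom_{\sT/\sS}\bigl(\bigoplus\nolimits_i X_i,\,Y\bigr)\to \prod_i \Hom_{\sT/\sS}(X_i,Y)\]
induced by the $\iota_i$'s is a bijection.

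For surjectivity, I would start with a family $(f_i)$, each $f_i$ represented by a left roof $X_i\xleftarrow{s_i} X'_i\xrightarrow{g_i} Y$ with $\mathrm{cone}(s_i)\in \sS$, and form $\bigoplus s_i\colon \bigoplus X'_i \to \bigoplus X_i$. Since $\sS$ is \emph{localising}, $\mathrm{cone}(\bigoplus s_i)\simeq \bigoplus\mathrm{cone}(s_i)\in \sS$, so this is an admissible left arrow; the universal property of $\bigoplus X'_i$ in $\sT$ then supplies a unique $g\colon\bigoplus X'_i\to Y$ with $g\iota'_i=g_i$, providing a roof for a preimage of $(f_i)$. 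Independence of the chosen representatives for the $f_i$'s follows by the standard remark that any two left roof representatives admit a common refinement, whose coproduct still has cone in $\sS$.

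For injectivity, I would suppose $f\iota_i=0$ in $\sT/\sS$ for all $i$ and represent $f$ by a right roof $\bigoplus X_i\xrightarrow{\phi} Y'\xleftarrow{s} Y$. Each vanishing $f\iota_i=0$ furnishes a morphism $\tau_i\colon Y'\to Y''_i$ with cone in $\sS$ such that $\tau_i\phi\iota_i=0$ in $\sT$, equivalently a factorisation $\phi\iota_i=j_i\alpha_i$ through the fibre $A_i$ of $\tau_i$, which lies in $\sS$. The crucial step is to uniformise these a priori unrelated factorisations: I would take $\bigoplus A_i$, which belongs to $\sS$ by the localising hypothesis, assemble the maps $j_i\colon A_i\to Y'$ into a single morphism $\bigoplus A_i\to Y'$ via the coproduct universal property in $\sT$, and assemble the $\alpha_i$ into a morphism $\bigoplus X_i\to \bigoplus A_i$. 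Checking components against each $\iota_i$ shows that $\phi$ factors through this object of $\sS$, hence vanishes in $\sT/\sS$, and so does $f$.

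The main obstacle will be this injectivity step, namely the passage from the individually chosen witnesses $\tau_i$ to a single factorisation through an object of $\sS$. The localising hypothesis --- closure under arbitrary direct sums --- is precisely the input that makes the assembly possible; without it, the argument would have no way to produce a uniform factorisation, and indeed the conclusion would generally fail.
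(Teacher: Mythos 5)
Your argument is correct, and it is essentially the standard calculus-of-fractions proof: the paper offers no proof of its own, citing only Neeman's Cor.~3.2.11, whose argument likewise shows that a coproduct formed in $\sT$ remains a coproduct in $\sT/\sS$ by analysing roofs, with the localising hypothesis entering exactly where you place it (closure of $\sS$ under the coproducts $\bigoplus\mathrm{cone}(s_i)$ and $\bigoplus A_i$). Nothing to fix.
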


Recall that an object $X$ of  $\sT$ is \emph{compact} if  
$\sT(X,-)$ commutes with representable direct sums.  A 
triangulated functor $T:\sT\to\sT'$ 
is \emph{dense} if the image of $T$ generates $\sT'$ in the sense that
$\sT'$ is the smallest localising subcategory of itself that contains
this image: this is the same notion as in \cite{voetri}. 

\begin{defn}\label{dA.2}
Let $B$ be a class of objects in $\sT$. We write
\begin{align*}
B^\perp &=\{X\in \sT\mid \sT(B[i],X)=0\,\forall i\in\Z\}\\
{}^\perp B &=\{X\in \sT\mid \sT(X,B[i])=0\,\forall i\in\Z\}.
\end{align*}
These are triangulated subcategories of $\sT$. We also write
\begin{itemize}
\item $\langle B\rangle$ for the triangulated subcategory of $\sT$ generated by $B$ (\ie the smallest triangulated subcategory of $\sT$ which contains $B$);
\item  $\langle B\rangle^\natural$ for the thick subcategory of $\sT$ generated by $B$;
\item  $\langle B\rangle^\oplus$ for the localising subcategory of $\sT$ generated by $B$.
\end{itemize}
\end{defn}

\subsection{Some results of Neeman}\label{r6.1}

Below we shall use results from Amnon Neeman's book
\cite{neeman2}, especially \emph{loc. cit.}, Lemma 4.4.5 and Theorem 4.4.9. Most of them are stated there with respect to an infinite cardinal $\alpha$; here, we shall only need the case where $\alpha=\aleph_0$ as in \cite{neeman}. For the reader's convenience, we now state these results in this special case.

\begin{thm}[\protect{\cite[Th. 4.3.3 and Cor. 4.4.5]{neeman2}}]\label{neem1} Let $\sT$ be a triangulated category with small direct sums, $\sT^c$ its (thick) subcategory of compact objects, $S$ a thick subcategory of $\sT^c$ and $\langle S \rangle^\oplus$ the localising
subcategory of $\sT$ generated by $S$. Let $(x,z)\in \sT^c\times \langle S \rangle^\oplus$, and let $f\in \sT(x,z)$. Then $f$ factors through an object of $S$. In
particular, $\langle S \rangle^\oplus =\sT\Rightarrow S=\sT^c$.
\end{thm}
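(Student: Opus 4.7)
\medskip

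\noindent\textbf{Plan of proof.} The plan is to reduce the factorisation statement to the standard ``compactness implies finite factorisation through a homotopy colimit'' mechanism, and then to exploit thickness of $S$ in $\sT^c$ to control the finite stage. First I would introduce the strictly full subcategory
\[\sU\subseteq \langle S\rangle^\oplus\]
consisting of those $z\in\sT$ that can be written as $z\simeq \hocolim_n z_n$ for some sequence
\[z_0\to z_1\to z_2\to \cdots\]
with $z_0\in S$ and with $\operatorname{cone}(z_{n-1}\to z_n)\in S$ for every $n\ge 1$ (using here the definition of $\hocolim$ via the exact triangle $\bigoplus z_n\to \bigoplus z_n\to \hocolim z_n\to (\bigoplus z_n)[1]$). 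A direct check shows $S\subseteq \sU$ (take constant sequences) and $\sU$ is closed under countable direct sums (by interleaving sequences) and under cones (by taking termwise cones). With a little care, one then verifies that $\sU$ is in fact a localising subcategory of $\sT$ containing $S$, so $\sU=\langle S\rangle^\oplus$.

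Next, given $z=\hocolim z_n$ with the $z_n$ as above and $f\colon x\to z$ with $x\in\sT^c$, I would apply compactness of $x$ to the defining triangle
\[\bigoplus_n z_n\To \bigoplus_n z_n\To z\To \Bigl(\bigoplus_n z_n\Bigr)[1].\]
A standard Milnor-type argument (using that $\sT(x,\bigoplus z_n)=\bigoplus \sT(x,z_n)$) shows that any map from a compact object to a homotopy colimit of a sequence factors through some finite stage: concretely, $f$ lifts to a map $\tilde f\colon x\to z_N$ such that $f$ equals the composite $x\xrightarrow{\tilde f}z_N\to z$.

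Now comes the key observation: by construction $z_N$ is obtained from objects of $S$ by finitely many iterated cones, hence lies in the triangulated subcategory of $\sT$ generated by $S$. Since $S\subseteq \sT^c$ and $\sT^c$ is closed under cones and finite direct sums, $z_N\in \sT^c$; since moreover $S$ is a triangulated subcategory of $\sT^c$ (being thick there), this triangulated subcategory generated by $S$ within $\sT^c$ is just $S$ itself. Hence $z_N\in S$, and $f$ factors through an object of $S$ as claimed.

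The main potential obstacle is the first step: verifying carefully that the subcategory $\sU$ is localising, in particular that it is closed under cones, because one has to compare a cone of two homotopy colimits with a homotopy colimit of cones and keep the successive quotients inside $S$. This is standard but slightly fiddly, and one must remember to use that $S$ is closed under finite direct sums (being triangulated and containing $0$) so that the termwise cones still have successive quotients in $S$. For the ``in particular'' part, one takes $x\in \sT^c=\sT^c\cap \langle S\rangle^\oplus$ and applies the factorisation to $f=1_x$: this exhibits $x$ as a direct summand of some $s\in S$, and since $S$ is thick in $\sT^c$ one concludes $x\in S$.
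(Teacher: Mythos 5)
Your overall strategy (describe $\langle S\rangle^\oplus$ explicitly, factor a map from a compact object through a finite stage, conclude by thickness) is different from the paper's, which follows Neeman directly: one introduces the full subcategory of those $z$ for which the conclusion holds for every compact $x$, and shows that this subcategory contains $S$, is triangulated and is closed under coproducts, hence contains $\langle S\rangle^\oplus$. But your version of the explicit description contains a genuine error, and the error sits exactly at the point your argument needs it. The subcategory $\sU$ of objects of the form $\hocolim z_n$ with $z_0\in S$ and all cones $\operatorname{cone}(z_{n-1}\to z_n)\in S$ is \emph{not} localising: every such $z$ is the cone of a map between countable direct sums of compact objects, so $\sU$ cannot contain an arbitrary (say uncountable) direct sum of nonzero objects of $S$, which $\langle S\rangle^\oplus$ must contain. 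The correct description (the one recorded in Remark \ref{t3.1}, due to Beilinson--Vologodsky, and underlying Neeman's proof) is that $M_0$ and each successive cone is a possibly \emph{infinite direct sum of shifts} of objects of $S$ --- and $S$, being a subcategory of $\sT^c$, is not closed under infinite direct sums. With the corrected description your key step collapses: the finite stage $z_N$ is an iterated extension of infinite direct sums of shifts of $S$-objects, so it is neither compact nor in $S$, and the conclusion ``$z_N\in S$'' is unavailable.

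The argument can be repaired, but it requires an extra layer that is precisely where the real work lies: after factoring $f$ through $z_N$ by compactness, one must prove by induction on $N$ that any map from a compact object into such a $z_N$ factors through an object of $S$, using at each stage that a map from a compact object into a direct sum of shifts of $S$-objects lands in a finite subsum (which lies in $S$ since $S$ is triangulated, hence closed under finite sums and shifts), together with an octahedron argument and the thickness of $S$. This is essentially the content of Neeman's Lemma 4.4.5, and once one is doing that induction it is cleaner to dispense with the explicit hocolim description altogether and argue as the paper does. Your treatment of the ``in particular'' clause (apply the factorisation to $1_x$ and use that $S$ is thick in $\sT^c$) is correct and agrees with the paper. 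Finally, your closure of $\sU$ under cones ``by taking termwise cones'' would also need care (cones are not functorial, and one must compare the cone of a map of homotopy colimits with a homotopy colimit of cones), but this is secondary to the main gap above.
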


Let us sketch the proof: ``in particular" is obtained by applying the
theorem to the identity map of a compact object. The proof of \ref{neem1}
goes as follows: Neeman introduces the full subcategory $\sS$ of $\sT$
consisting of those objects $z$ for which the conclusion of the theorem is
valid for any compact $x$. He successively proves that $\sS$ contains $S$, is
triangulated and is closed under coproducts. Therefore $\sS$ contains $\langle S \rangle^\oplus$.

This theorem is used in the proof of Proposition \ref{l8.1} below.

\begin{thm}[\protect{\cite[Th. 4.4.9]{neeman2}}]\label{neem2} Let $\sS$ be a compactly generated  triangulated category with small direct sums, and let $R$ be a set of objects of $\sS^c$. Write $\sR$ for the localising subcategory generated by $R$. Then:
\begin{thlist}
\item $\sR^c=\sR\cap \sS^c$. In particular, $\sR=\langle \sR^c\rangle^\oplus$.
\item The natural functor $\sS^c/\sR^c\to \sS/\sR$ factors through a full embedding $\sS^c/\sR^c\to (\sS/\sR)^c$.
\item Any object of $(\sS/\sR)^c$ is isomorphic to a direct summand of an object of $\sS^c/\sR^c$.
\end{thlist}
\end{thm}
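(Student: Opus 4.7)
The plan is to combine Theorem \ref{neem1} with Brown representability. Since $R$ is a set of compact objects of $\sS$, the same objects are compact in the containing subcategory $\sR$, so $\sR$ is itself compactly generated. Brown representability then provides a right adjoint $\iota^*$ to the inclusion $\iota:\sR\inj\sS$, and the standard recollement formalism yields a fully faithful right adjoint $Q^*$ to the Verdier quotient $Q:\sS\to\sS/\sR$, identifying $\sS/\sR$ with the right orthogonal $\sR^\perp\subseteq\sS$. Because $R\subseteq\sS^c$, the subcategory $\sR^\perp$ is closed under coproducts in $\sS$, hence $Q^*$ preserves coproducts; equivalently, $Q$ sends compact objects to compact objects. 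This last fact is the workhorse throughout.

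Part (i) falls out immediately. The inclusion $\langle R\rangle^\natural\subseteq\sR\cap\sS^c$ is clear, and conversely, for $x\in\sR\cap\sS^c$, Theorem \ref{neem1} applied to $\mathrm{id}_x$ with $S=\langle R\rangle^\natural$ exhibits $x$ as a direct summand of an object of $S$; since $S$ is thick, $x\in S$.

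For part (ii), the universal property of Verdier quotient produces the factorisation $\sS^c/\sR^c\to(\sS/\sR)^c$, as $Q$ kills $\sR^c$ and lands in $(\sS/\sR)^c$ by the setup. Full faithfulness is the heart of the matter. A morphism $f:Qx\to Qy$ between images of compact objects is represented by a roof $x\xleftarrow{s}x'\xrightarrow{g}y$ fitting into an exact triangle $x'\xrightarrow{s}x\xrightarrow{\delta}r\to x'[1]$ with $r\in\sR$. By Theorem \ref{neem1}, the morphism $\delta:x\to r$ factors as $x\xrightarrow{\alpha}r'\xrightarrow{\beta}r$ with $r'\in\langle R\rangle^\natural=\sR^c$. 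Completing $\alpha$ to a triangle $x''\to x\xrightarrow{\alpha}r'\to x''[1]$ places $x''$ in $\sS^c$; the composite $x''\to x\xrightarrow{\delta}r$ vanishes, so $x''\to x$ lifts through $s$ to a morphism $x''\to x'$, and we obtain a roof $x\leftarrow x''\to y$ in $\sS^c$ whose connecting cone is $r'\in\sR^c$, equivalent in $\sS/\sR$ to the original. This is surjectivity; injectivity uses Theorem \ref{neem1} in the same spirit to refine a null-factorisation of a compact-to-compact map through an object of $\sR$ into a factorisation through an object of $\sR^c$.

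For part (iii), I apply Theorem \ref{neem1} inside $\sS/\sR$. Since $\sS^c$ generates $\sS$ and $Q$ preserves coproducts, the essential image $Q(\sS^c)$, which is identified with $\sS^c/\sR^c$ via (ii), generates $\sS/\sR$ as a localising subcategory; by the setup it lies in $(\sS/\sR)^c$. Let $S$ be the thick closure of $\sS^c/\sR^c$ inside $(\sS/\sR)^c$: applied to $\mathrm{id}_c$ for $c\in(\sS/\sR)^c$, Theorem \ref{neem1} factors the identity through $S$, so $c$ is a direct summand of an object of $S$, and hence of an object of $\sS^c/\sR^c$ itself. The main obstacle is the roof-refinement in (ii): one must check carefully that the refined roof is equivalent to the original in the Verdier quotient and that its cone really lies in $\sR^c$ (not merely in $\sR$). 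Once this technical point is handled, everything else follows mechanically from Theorem \ref{neem1} and the existence of the coproduct-preserving right adjoint $Q^*$.
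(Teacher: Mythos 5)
The paper does not prove this statement at all: it is quoted from Neeman's book (\cite[Th.\ 4.4.9]{neeman2}), so there is no in-house argument to compare against. Your proof follows the standard Neeman--Thomason route, and almost all of it is sound: $\sR$ is compactly generated by $R$, Brown representability gives the right adjoint to $\sR\inj\sS$ and hence the fully faithful right adjoint $Q^*$ with essential image $\sR^\perp$; since $\sR^\perp=R^\perp$ is closed under coproducts ($R$ consisting of compacts), $Q$ preserves compactness. The roof-refinement via Theorem \ref{neem1} gives fullness in (ii), the factorisation criterion for $Qg=0$ plus Theorem \ref{neem1} gives faithfulness, and applying Theorem \ref{neem1} inside $\sS/\sR$ (which has small coproducts by Lemma \ref{lA.2} and is generated by $Q(\sS^c)$ because $Q$ is coproduct-preserving and essentially surjective) gives (iii); the last step there also uses that $Q(\sS^c)$ is a triangulated subcategory by fullness, so its thick closure consists exactly of direct summands of its objects.

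The one genuine gap is in (i). What you actually prove is $\sR\cap\sS^c=\langle R\rangle^\natural$. The assertion $\sR^c=\sR\cap\sS^c$ also requires the inclusion $\sR^c\subseteq\sS^c$, i.e.\ that an object compact in $\sR$ is compact in the ambient category $\sS$ --- and this is the non-obvious half of (i), not a formality (the easy direction is $\sR\cap\sS^c\subseteq\sR^c$, since coproducts in $\sR$ are computed in $\sS$). It is filled by one more application of the tool you are already using: $\sR$ has small coproducts and $\langle\langle R\rangle^\natural\rangle^\oplus=\sR$, so the ``in particular'' clause of Theorem \ref{neem1}, applied with $\sT=\sR$ and $S=\langle R\rangle^\natural$, yields $\sR^c=\langle R\rangle^\natural\subseteq\sS^c$ (the thick closure of $R$ is the same whether formed in $\sR$ or in $\sS$, both being idempotent-complete). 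With that sentence added, part (i) is complete and the rest of your argument goes through.
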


The next result we shall use is Neeman's ``Brown representability theorem", which gives sufficient conditions for the existence of a right adjoint.

\begin{defn}[\protect{\cite[Def. 8.2.1]{neeman2}}] Let $\sT$ be a triangulated category. We say that $\sT$ has the \emph{Brown representability property} if 
\begin{enumerate}
\item It has small direct sums.
\item Any homological functor $H:\sT^\op\to \Ab$ which converts infinite direct sums into products is representable.
\end{enumerate}
\end{defn}

\begin{lemma}\label{l3.2} a) Any adjoint (left or right) of a triangulated functor is triangulated.\\ b) Suppose that $\sT$ has the Brown representability property. Let $f:\sT\to \sU$ be a triangulated functor.  Then $f$ has a right adjoint if and only if it commutes with infinite direct sums. \end{lemma}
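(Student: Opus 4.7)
For part (a), I will treat the case of a right adjoint $g$ to a triangulated functor $f:\sT\to\sU$; the case of a left adjoint follows by passing to opposite categories. First I would show that $g$ commutes with suspension. The triangulated structure on $f$ provides a natural isomorphism $\theta:f\circ [1]\iso [1]\circ f$; dualising via the adjunction $(f,g)$ and using the fact that $[1]$ is an auto-equivalence with inverse $[-1]$, one gets a natural isomorphism $g\circ[1]\iso[1]\circ g$ characterised by compatibility of unit and counit with $\theta$. Next I would show that $g$ sends exact triangles to exact triangles. Given an exact triangle
\[A\to B\to C\by{+1}\]
in $\sU$, complete $g(A)\to g(B)$ to an exact triangle $g(A)\to g(B)\to T\by{+1}$ in $\sT$. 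Applying $f$ (which is triangulated) and composing with the counit $fg\to \mathrm{id}_\sU$ gives a morphism of candidate triangles whose first two vertical arrows are the counits; by the triangulated axiom TR3, this extends to a morphism $fT\to C$, and by adjunction to a morphism $T\to g(C)$. To see that it is an isomorphism, one tests against arbitrary $X\in\sT$: the long exact sequence in $\sT(X,-)$ attached to the candidate triangle $g(A)\to g(B)\to T$ matches, via the adjunction isomorphism $\sT(X,g-)\simeq \sU(fX,-)$ and the exactness of $\sU(fX,-)$ on the original triangle, with the expected long exact sequence for $g(C)$, so Yoneda concludes.

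For part (b), the forward direction is standard: any functor with a right adjoint preserves colimits, and in particular coproducts. For the converse, assume $f$ is triangulated and commutes with small direct sums. For each object $Y\in\sU$, consider the functor
\[H_Y:\sT^\op\to\Ab,\qquad X\mapsto \sU(f(X),Y).\]
Because $f$ is triangulated, $H_Y$ is homological; because $f$ commutes with small coproducts, $H_Y$ converts coproducts in $\sT$ into products of abelian groups. The Brown representability hypothesis for $\sT$ therefore yields an object $g(Y)\in\sT$ and a natural isomorphism $\sT(-,g(Y))\iso H_Y(-)=\sU(f(-),Y)$. Functoriality of $g$ in $Y$ follows from the Yoneda lemma: a morphism $Y\to Y'$ induces a natural transformation $H_Y\to H_{Y'}$, hence a unique morphism $g(Y)\to g(Y')$. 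This gives the desired right adjoint, and by part (a) it is automatically triangulated.

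The only slightly delicate point is part (a): one must verify that the morphism $T\to g(C)$ produced by TR3 makes the diagram commute up to the correct normalisation, but this is forced by the Yoneda argument against test objects. Part (b) is essentially a direct application of Neeman's Brown representability once one checks homologicity and coproduct-preservation of $H_Y$, which are both immediate from the hypotheses on $f$.
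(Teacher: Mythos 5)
Your proposal is correct. Part (b) is exactly the paper's argument: the forward direction is that left adjoints preserve coproducts, and the converse applies Brown representability to the homological, product-converting functor $X\mapsto \sU(f(X),Y)$, with functoriality in $Y$ supplied by Yoneda. For part (a) the paper gives no proof at all but simply cites Neeman \cite[Lemma 5.3.6]{neeman2}; you have filled in what is essentially Neeman's standard argument (transport of the shift isomorphism through the adjunction, completion of $g(A)\to g(B)$ to a triangle, TR3 plus adjunction to produce $T\to g(C)$, and the five-lemma/Yoneda comparison of long exact sequences). One small imprecision: the commutativity of the comparison diagram between the candidate triangle and $(g(A),g(B),g(C))$ is not "forced by the Yoneda argument" — it must be established \emph{before} that argument can be run, and it follows from naturality of the unit together with the triangle identities of the adjunction (e.g.\ $\tilde\psi\circ v=g(\psi\circ f(v))\circ\eta_{g(B)}=g(w)$). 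This is routine, but it is an input to the Yoneda step, not a consequence of it.
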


\begin{proof} a) is proven in \cite[Lemma 5.3.6]{neeman2}. We give the proof of b) since it is very simple. If $f$ has a right adjoint, it commutes with all representable colimits. Conversely, let $U\in \sU$. We must prove that the functor $T\mapsto \sU(fT,U)$ is representable. But if $f$ commutes with infinite direct sums, this functor converts infinite direct sums into products.
\end{proof}

\begin{thm}[\protect{\cite[Prop. 8.4.2]{neeman2}}]\label{neem5}  If $\sT$ has small direct sums and is compactly generated, it has the Brown representability property.
\end{thm}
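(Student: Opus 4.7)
\medskip
\noindent\textbf{Proof plan.}
The plan is to follow the classical Adams/Brown argument adapted by Neeman to the compactly generated setting. Fix a set $G$ of compact generators of $\sT$; enlarging it if necessary, we may assume $G$ is closed under translation. Given a homological functor $H \colon \sT^\op \to \Ab$ that converts direct sums into products, I would construct simultaneously an object $X \in \sT$ and an element $\xi \in H(X)$ such that the natural transformation
\[
\Phi \colon \sT(-,X) \to H,\qquad (f\colon T\to X)\mapsto H(f)(\xi),
\]
is an isomorphism. By closure of $G$ under shifts, it suffices to check bijectivity on objects of $G$, and then the collection of $T\in\sT$ on which $\Phi_T$ is bijective is a localising subcategory (here one uses that both $\sT(-,X)$ and $H$ send coproducts to products and are homological), which therefore equals $\sT$ since it contains the set $G$ of compact generators.

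The construction is by induction on $n\ge 0$, yielding a sequence $X_0 \to X_1 \to X_2 \to \cdots$ with compatible elements $\xi_n \in H(X_n)$. At stage $0$, using that $H$ converts sums into products, set
\[
X_0 = \bigoplus_{g\in G}\bigoplus_{x\in H(g)} g, \qquad \xi_0 = (x)_{g,x}\in H(X_0) = \prod_{g,x} H(g).
\]
This already makes $\Phi_{X_0}$ pointwise surjective on every $g\in G$. Given $(X_n,\xi_n)$, let
\[
K_n \;=\; \bigoplus_{g\in G}\ \bigoplus_{\substack{f\in \sT(g,X_n)\\ H(f)(\xi_n)=0}} g
\]
with the tautological map $u_n\colon K_n\to X_n$, and define $X_{n+1}$ by the exact triangle $K_n\by{u_n} X_n\to X_{n+1}\to K_n[1]$. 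Applying $H$ and using the product property, $H(u_n)(\xi_n)=0$, so $\xi_n$ lifts to some $\xi_{n+1}\in H(X_{n+1})$; by construction the kernel of $\Phi_{X_n}$ on each $g\in G$ is killed in the map $X_n\to X_{n+1}$.

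Now set $X = \hocolim X_n$, defined by the exact triangle
\[
\bigoplus_{n\ge 0} X_n \by{1-\mathrm{shift}} \bigoplus_{n\ge 0} X_n \to X \to \Bigl(\bigoplus X_n\Bigr)[1].
\]
Applying $H$ and using that $H$ converts sums into products gives an exact sequence $\prod H(X_n)\by{1-\mathrm{shift}^*} \prod H(X_n)\to H(X)$, whence the compatible family $(\xi_n)$ assembles (up to possibly modifying a Milnor $\lim^1$-style obstruction, which is killed by exactness of this very sequence) into an element $\xi\in H(X)$ restricting to each $\xi_n$. Finally, for $g\in G$, compactness gives $\sT(g,X) = \colim_n \sT(g,X_n)$: surjectivity of $\Phi_g$ is immediate from stage $0$; and if $f\colon g\to X$ satisfies $H(f)(\xi)=0$, compactness factors $f$ through some $f_n\colon g\to X_n$ with $H(f_n)(\xi_n)=0$, hence $f_n$ is one of the summands of $K_n$ and the composite $g\to X_n\to X_{n+1}$ vanishes, so $f=0$ in $\sT(g,X)$. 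This proves $\Phi_g$ is bijective for all $g\in G$, concluding the argument.

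The main obstacle I anticipate is the passage from bijectivity at the $X_n$'s to bijectivity at the homotopy colimit: one must handle carefully the $\lim^1$-type issue when lifting the compatible family $(\xi_n)$ to $H(X)$, and verify that $\sT(g,-)$ really commutes with this homotopy colimit (which is where compactness of $g$ is used crucially). Once these are settled, the localising-subcategory argument to upgrade bijectivity from $G$ to all of $\sT$ is routine.
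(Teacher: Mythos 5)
Your argument is correct and is essentially the standard proof of Brown representability for compactly generated triangulated categories (the $\aleph_0$ case of Neeman's Prop.\ 8.4.2, which the paper simply cites without reproducing a proof): the cellular tower $X_0\to X_1\to\cdots$, the lift of the compatible family $(\xi_n)$ through the exact sequence obtained by applying $H$ to the homotopy colimit triangle, and the upgrade from the generating set $G$ to all of $\sT$ via the localising-subcategory/five-lemma argument are exactly the steps of that proof. The only imprecision is cosmetic: there is no $\lim^1$ obstruction to lifting $(\xi_n)$ to $\xi\in H(X)$ --- the exact sequence $H(X)\to\prod_n H(X_n)\to\prod_n H(X_n)$ directly gives surjectivity of $H(X)\to\lim_n H(X_n)$ --- so the parenthetical hedge can be deleted.
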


We now get the following complement to Theorem \ref{neem2}:

\begin{thm} \label{neem3} With the assumptions and notation of Theorem \ref{neem2}, 
\begin{thlist}
\item The localisation functor 
$\sS\by{\pi} \sS/\sR$ has a right adjoint $j$.
\item The essential image of $j$ is 
$\sR^\perp$.
\item Let $i:\sR\to \sS$ be the inclusion functor. Then $i$ has a right adjoint $p$, and for any object $X\in \sS$, the sequence
\[ipX\to X\to j\pi X\]
defines an exact triangle.
\item $\langle(\sS/\sR)^c\rangle^\oplus=\sS/\sR$. 
\item The functor $j$ itself has a right adjoint.
\end{thlist}
\end{thm}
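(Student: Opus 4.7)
The plan is to apply Neeman's Brown representability theorem (Theorem \ref{neem5}) together with Lemma \ref{l3.2}(b) at each step where a right adjoint is produced, in each case verifying that the domain is compactly generated (hence has Brown representability) and that the functor in question preserves direct sums. I would prove the items in the order (i), (iii), (ii), (iv), (v), with (ii) and (iii) intertwined.

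For (i), Lemma \ref{lA.2} endows $\sS/\sR$ with small direct sums preserved by $\pi$; since $\sS$ has Brown representability by Theorem \ref{neem5}, Lemma \ref{l3.2}(b) produces a right adjoint $j$. Because $\pi$ is a Verdier localisation, the counit $\pi j\to 1$ is an isomorphism and $j$ is fully faithful. For (iii), the inclusion $i:\sR\inj \sS$ preserves direct sums, and by Theorem \ref{neem2}(i), $\sR$ is compactly generated (with $\sR^c\supseteq R$) hence has Brown representability, so Lemma \ref{l3.2}(b) furnishes $p$. For $X\in\sS$, complete the counit $ipX\to X$ to an exact triangle $ipX\to X\to C\to ipX[1]$. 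As $ipX\in\sR$, $\pi X\iso \pi C$; for $Z\in\sR$, the adjunction identity $\sS(Z,ipX)=\sR(Z,pX)=\sS(Z,X)$ (together with its shifts, since $\sR$ is triangulated) applied to the triangle yields $\sS(Z,C[n])=0$ for all $n$, so $C\in\sR^\perp$. For (ii), $\sS(Z,jY)=\sS/\sR(\pi Z,Y)=0$ is immediate. Conversely, for $X\in\sR^\perp$ the same adjunction yields $\sR(Z,pX)=0$ for all $Z\in\sR$, whence $pX=0$, so the triangle of (iii) collapses to an isomorphism $X\iso C\in \sR^\perp$. Applied to the $C$ produced in (iii), this gives $C\iso j\pi C = j\pi X$, finishing (iii).

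For (iv), $\pi$ preserves direct sums and sends $\sS^c$ into $(\sS/\sR)^c$ (Theorem \ref{neem2}(ii)); since $\sS=\langle \sS^c\rangle^\oplus$ by the hypothesis that $\sS$ is compactly generated, and $\pi$ is essentially surjective, $\sS/\sR\subseteq \langle \pi(\sS^c)\rangle^\oplus\subseteq \langle(\sS/\sR)^c\rangle^\oplus$. For (v), (iv) together with Theorem \ref{neem5} yields Brown representability for $\sS/\sR$, so by Lemma \ref{l3.2}(b) it suffices to show that $j$ preserves direct sums. Given a family $\{Y_i\}$ in $\sS/\sR$, test the canonical map $\bigoplus jY_i\to j(\bigoplus Y_i)$ on objects $X\in\sS^c$ (which are sufficient, since $\sS$ is compactly generated and compact objects detect isomorphisms). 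As $\pi X$ is compact in $\sS/\sR$ by Theorem \ref{neem2}(ii), a double application of the adjunction $\pi\dashv j$ and of compactness gives
\[\sS(X,j(\bigoplus Y_i)) = \sS/\sR(\pi X,\bigoplus Y_i) = \bigoplus\sS/\sR(\pi X,Y_i) = \bigoplus\sS(X,jY_i) = \sS(X,\bigoplus jY_i),\]
and the map is an isomorphism.

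The main obstacle is step (v): without knowing that $\sS/\sR$ is itself compactly generated---which is the content of (iv)---one cannot apply Brown representability to $j$, and even once (iv) is in hand, $j$ a priori need not preserve coproducts. The chained adjunction-plus-compactness computation above is the technical heart of the theorem and crucially depends on Theorem \ref{neem2}(ii), which lets $\pi$ shuttle compact generators from $\sS$ down to $\sS/\sR$.
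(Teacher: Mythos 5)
Your overall strategy --- Brown representability via Theorem \ref{neem5} and Lemma \ref{l3.2}~b) at each stage, with Theorem \ref{neem2} supplying compact generation --- is the same as the paper's. The differences are that the paper, after producing the right adjoint $p$ of $i$ exactly as you do, delegates (i)--(iii) wholesale to Verdier's localisation theorem [Prop.~2.3.3 of \cite{verd}], and that for (v) it shows $\sR^\perp=(\sR^c)^\perp$ is closed under coproducts in $\sS$, whereas you verify that $j$ preserves coproducts by testing against compact objects of $\sS$; both routes to (v) are valid, and your arguments for (i), (iv), (v), for the existence of $p$, and for the membership $C\in\sR^\perp$ of the cone in (iii) are correct.

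There is, however, a genuine gap precisely at the point covered by Verdier's theorem: the converse inclusion in (ii) and the identification of the third vertex in (iii). For $X\in\sR^\perp$ you deduce $pX=0$ and conclude that ``the triangle of (iii) collapses to an isomorphism $X\iso C$''; but $C$ was \emph{defined} as the cone of $ipX\to X$, so when $pX=0$ this isomorphism is a tautology and does not place $X$ in the essential image of $j$. The next sentence, ``applied to the $C$ produced in (iii), this gives $C\iso j\pi C$'', is therefore a non sequitur: the assertion that the unit $\eta_C\colon C\to j\pi C$ is invertible for $C\in\sR^\perp$ is exactly what must be proved. It can be repaired formally from what you already have: the triangle identity $\epsilon_{\pi X}\circ\pi(\eta_X)=\mathrm{id}$ together with the invertibility of the counit $\epsilon$ (full faithfulness of $j$) shows $\pi(\eta_X)$ is invertible for every $X$, so the cone $D$ of $\eta_C$ lies in $\ker\pi=\sR$ (note $\sR$ is thick, being localising in a category with countable coproducts); on the other hand $D$ is an extension of the objects $C,j\pi C\in\sR^\perp$, hence $D\in\sR\cap\sR^\perp=0$. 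Alternatively one carries out the calculus-of-fractions computation $\sS(Y,C)\iso\sS/\sR(\pi Y,\pi C)$ for $C\in\sR^\perp$, or simply cites Verdier as the paper does. With either repair, $C\iso j\pi C\iso j\pi X$ and (ii), (iii) follow; the rest of your proof then stands.
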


\begin{proof} By  assumption, $\sR$ has small direct sums. It is compactly generated by Theorem \ref{neem2}. Hence it has the Brown representability property by Theorem \ref{neem5}. Lemma \ref{l3.2} now implies that the functor $i$ of (iii) has a right adjoint.

Given this, assertions (i), (ii) and (iii) are part of a general theorem of Verdier \cite[Prop. 2.3.3]{verd}.

For (iv), we have $\sS/\sR = (\sS^c/\sR^c)^\oplus$ since $\sS=(\sS^c)^\oplus$ and $\pi$ commutes with small direct sums (Lemma \ref{lA.2}), and we conclude by Theorem \ref{neem2} (iii). 

For (v), let $j':\sR^\perp \to \sS$ be the inclusion. Observe  that $ \sR^\perp= (\sR^c)^\perp$ by denseness; by Theorem \ref{neem2} (i) this implies that $j'$ commutes with small direct sums, hence, by (ii), so does $j$. Since,  by (iv), $\sS/\sR$ is compactly generated, it has the Brown representability property which guarantees that $j$ has a right adjoint by Lemma \ref{l3.2} b).
\end{proof}

\begin{rk}\label{t3.1} The proposition p. 1714 of  Bei\-lin\-son-Vologodsky \cite{be-vo} wraps up all the above, except for the existence of the right adjoint to $j$ in Theorem \ref{neem3} (v). It adds a nice explicit description of the objects of $\sR$: every such object can be represented as $\hocolim(M_a, i_a)$
where $M_0$ and each $cone(i_a)$ is a direct sum of translations of objects from $R$. We shall not use this result here.
\end{rk}

\subsection{Localisation and $t$-structures} \label{s.loc}
The standard reference for $t$-structures is \cite{bbd}, whose notations we follow. We shall mainly use the following lemma \cite[Prop. 1.3.17 (i), (iii)]{bbd}:

\begin{lemma}\label{lA.5} Let $F^*:\sS\leftrightarrows \sT :F_*$ be a pair of adjoint triangulated functors between $t$-categories $\sS,\sT$ with hearts $\sA,\sB$. Then $F^*$ is right $t$-exact (\ie $F^*(\sS^{\le 0})\subseteq \sT^{\le 0}$) if and only if $F_*$ is left exact (\ie $F_*(\sT^{\ge 0})\subseteq \sS^{\ge 0}$). In this case, the functor ${}^pF^*:\sA\ni A\mapsto H^0F^*(A)\in \sB$ is  right exact, ${}^pF_*:\sB\ni B\mapsto H^0F_*(B)\in \sA$ is left exact and ${}^pF^*:\sA\leftrightarrows \sB :{}^pF_*$ form a pair of adjoint functors.\qed
\end{lemma}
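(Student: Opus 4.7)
The plan is to reduce both equivalences to the orthogonality characterisation of a $t$-structure, namely $\sT^{\le 0}={}^\perp(\sT^{\ge 1})$ and dually $\sT^{\ge 1}=(\sT^{\le 0})^\perp$. First I would observe that $F^*$ being right $t$-exact means that for every $X\in \sS^{\le 0}$ and every $Y\in \sT^{\ge 1}$, one has $\Hom_\sT(F^*X,Y)=0$. By adjunction this group is $\Hom_\sS(X,F_*Y)$, and its vanishing for all $X\in\sS^{\le 0}$ is equivalent to $F_*Y\in \sS^{\ge 1}$. Letting $Y$ vary yields $F_*(\sT^{\ge 1})\subseteq \sS^{\ge 1}$, which is the same as $F_*(\sT^{\ge 0})\subseteq\sS^{\ge 0}$ after shifting, \ie $F_*$ is left $t$-exact. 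The converse direction is symmetric.

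Assume now both conditions. To make sense of ${}^pF^*$ and ${}^pF_*$, note that for $A\in\sA\subseteq\sS^{\le 0}$ one has $F^*A\in\sT^{\le 0}$, so $H^0F^*A=\tau^{\ge 0}F^*A$ lies in the heart $\sB$; dually for ${}^pF_*$. To establish the adjunction, I would chain together the four canonical isomorphisms
\begin{align*}
\Hom_\sB({}^pF^*A,B)
&=\Hom_\sT(\tau^{\ge 0}F^*A,B)
=\Hom_\sT(F^*A,B)\\
&=\Hom_\sS(A,F_*B)
=\Hom_\sS(A,\tau^{\le 0}F_*B)
=\Hom_\sA(A,{}^pF_*B),
\end{align*}
where the second equality uses $B\in \sT^{\ge 0}$ and the fact that the cone of $F^*A\to \tau^{\ge 0}F^*A$ lies in $\sT^{<0}$, the third is the adjunction $F^*\dashv F_*$, and the fourth uses $A\in \sS^{\le 0}$ together with $F_*B\in \sS^{\ge 0}$ (which we get from left $t$-exactness of $F_*$).

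Finally, the exactness assertions are a formal consequence of the adjunction just constructed: any left adjoint between abelian categories is right exact and any right adjoint is left exact. There is no real obstacle here; the only subtlety is keeping track of which half of the truncation is available at each stage and verifying that the truncation isomorphisms in the chain above go through in both directions, which is precisely what the two $t$-exactness hypotheses guarantee.
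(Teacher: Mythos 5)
Your proof is correct. The paper does not actually prove this lemma but cites \cite[Prop.\ 1.3.17 (i), (iii)]{bbd}, and your argument --- the orthogonality characterisation $\sT^{\le 0}={}^{\perp}(\sT^{\ge 1})$ combined with adjunction for the equivalence, then the chain of truncation isomorphisms for the induced adjunction on hearts and the formal exactness of adjoints --- is precisely the standard argument given there.
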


We shall also need this lemma in the proof of Theorem \ref{tC.1}:

\begin{lemma}\label{lC.1} Let $F:\sS\to \sT$ be a right exact $t$-functor between $t$-categories $\sS,\sT$ with hearts $\sA,\sB$. Assume that ${}^pF:\sA\to \sB$ has kernel $0$  and that the $t$-structure of $\sS$ is non-degenerate.  Then $F$ is conservative in the following two cases:
\begin{thlist}
\item $F$ is $t$-exact;
\item the $t$-structure of $\sS$ is bounded above.
\end{thlist}
\end{lemma}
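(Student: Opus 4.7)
My plan is to show conservativity by proving directly that $F(X)=0$ implies $X=0$, since $F$ is triangulated.

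For case (i), $t$-exactness of $F$ gives a canonical isomorphism $H^i(F(X))\simeq{}^pF(H^i(X))$ for every $i$ \cite{bbd}. If $F(X)=0$, then ${}^pF(H^i(X))=0$ for all $i$, so the kernel-zero hypothesis on ${}^pF$ forces $H^i(X)=0$ for all $i$. Non-degeneracy of the $t$-structure on $\sS$ then yields $X=0$ (since $\bigcap_m\sS^{\le -m}=\bigcap_m\sS^{\ge m}=0$ forces an object with all cohomology vanishing to be zero).

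For case (ii), I would run the following downward induction. Fix $X$ with $F(X)=0$; by boundedness above, $X\in\sS^{\le n}$ for some $n\in\Z$. I claim $X\in\sS^{\le n-1}$. To see this, apply $F$ to the truncation triangle
\[\tau_{\le n-1}X\to X\to H^n(X)[-n]\by{+1}.\]
Since $F(X)=0$, rotation gives an isomorphism $F(H^n(X))[-n]\simeq F(\tau_{\le n-1}X)[1]$. By right $t$-exactness of $F$, the right-hand side lies in $\sT^{\le n-2}$, so $F(H^n(X))\in\sT^{\le -2}$. In particular ${}^pF(H^n(X))=H^0(F(H^n(X)))=0$, and the kernel-zero hypothesis gives $H^n(X)=0$, \ie $X\in\sS^{\le n-1}$. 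Iterating, $X\in\sS^{\le n-k}$ for every $k\ge 0$, and non-degeneracy forces $X=0$.

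The conceptual point is that in both cases right $t$-exactness plus the kernel-zero hypothesis on ${}^pF$ propagates vanishing of $F$ to vanishing of cohomology sheaves, and non-degeneracy then kills the object. The only mildly delicate step is the induction in (ii): one needs the bounded-above hypothesis to start it, together with the observation that the cone of the truncation, being concentrated in a single degree, is detected already by ${}^pF$ after noting its image lies two degrees below what right $t$-exactness a priori guarantees. I expect no real obstacle beyond the sign/shift bookkeeping in this last step.
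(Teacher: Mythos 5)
Your proof is correct and follows essentially the same route as the paper's: case (i) uses the compatibility $H^i(F(X))\simeq {}^pF(H^i(X))$ plus non-degeneracy, and case (ii) is a downward induction resting on the fact that, for a right $t$-exact $F$ and $X\in\sS^{\le n}$, the top cohomology ${}^pH^n(F(X))$ equals ${}^pF({}^pH^n(X))$. The only difference is that you extract this identity explicitly from the truncation triangle, whereas the paper simply invokes it.
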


\begin{proof} Let $X\in \sS$ be such that $F(X)=0$: we must show that $X=0$. In case (i), we just use the isomorphism ${}^pH^i(F(X)) ={}^pF ({}^pH^i(X))$ for any $i\in\Z$. In case (ii), let $i$ be an integer such that ${}^pH^j(X)= 0$ for $j>i$. By right exactness,
\[0= {}^pH^i(F(X)) ={}^pF ({}^pH^i(X))\]
hence ${}^pH^i(X)=0$ and we conclude.
\end{proof}

\subsection{The homotopy category of an additive category}\label{A.C}

Throughout this section, $\sA$ is an essentially small additive category and $\Mod\sA$ is the category of right $\sA$-modules (see \S \ref{s.not}).

The following derived analogue of Proposition \ref{p5.3} is a special case of a theorem of Bernhard Keller (see \cite[Remark 5.3 (a)]{keller}). 

\begin{prop}\label{l8.1} The functor
\[K^b(\sA)\by{\iota_\sA} D(\Mod\sA)\]
induced by the Yoneda embedding is fully faithful, has dense image and
identifies
$K^b(\sA)^\natural$ with the full subcategory of compact objects of
$D(\Mod\sA)$.
\end{prop}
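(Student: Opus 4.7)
The plan is to derive all three claims from a single elementary computation. Since each representable $y(A)$ is projective in $\Mod\sA$ (Proposition \ref{p5.3} b)), the evaluation functor $\sF\mapsto \sF(A)$ is exact; since $y(A)$ placed in degree zero is thus its own K-projective resolution, for any complex $X^\bullet\in C(\Mod\sA)$ and any $n\in\Z$ one has the tautological identity
\[\Hom_{D(\Mod\sA)}(y(A),X^\bullet[n])=H^n(X^\bullet)(A).\]

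From this I would first deduce full faithfulness of $\iota_\sA$ on $K^b(\sA)$ in the standard way: for $P^\bullet\in K^b(\sA)$ and $Q^\bullet$ arbitrary, induction on the length of $P^\bullet$ (equivalently, the fact that a bounded above complex of projectives is K-projective) gives $\Hom_{K(\Mod\sA)}(P^\bullet,Q^\bullet)\iso \Hom_{D(\Mod\sA)}(P^\bullet,Q^\bullet)$. The extension to $K^b(\sA)^\natural$ is then automatic, since $D(\Mod\sA)$, having countable coproducts, is pseudo-abelian by B\"okstedt--Neeman. The same identity also shows that each $y(A)$ is compact in $D(\Mod\sA)$, since $\mathrm{ev}_A$ and cohomology both commute with direct sums; compact objects form a thick subcategory, so $\iota_\sA(K^b(\sA)^\natural)$ consists of compact objects.

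For density and the identification of compacts, every $\sA$-module is a quotient of a free one (Proposition \ref{p5.3} c)), so iterating gives a free resolution of any module, whence $D^-(\Mod\sA)\subset \sL:=\langle y(\sA)\rangle^\oplus$. An arbitrary object of $D(\Mod\sA)$ is the homotopy colimit of its good truncations $\tau^{\ge -n}X$, each of which is represented by a bounded above complex and hence lies in $\sL$; closure of $\sL$ under homotopy colimits yields $\sL=D(\Mod\sA)$. Theorem \ref{neem1} applied to the thick subcategory $S=\iota_\sA(K^b(\sA)^\natural)\subseteq D(\Mod\sA)^c$ with $\langle S\rangle^\oplus=D(\Mod\sA)$ then gives $S=D(\Mod\sA)^c$, completing the proof.

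I expect the main obstacle to be the density step: the homotopy-colimit presentation $X\simeq \hocolim \tau^{\ge -n}X$ is standard, but verifying carefully that $\sL$ is closed under this telescope construction requires a bit of bookkeeping with unbounded derived categories, which is precisely the issue that forces one into the compactly generated setting of Neeman recalled in Remark \ref{t3.1}.
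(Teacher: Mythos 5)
Your argument is correct in substance and is the standard ``compactly generated'' proof. The paper itself does not prove Proposition \ref{l8.1}: it quotes it as a special case of Keller's theorem on derived categories of DG categories and refers to the first arXiv version for a self-contained argument; what you write is essentially that self-contained argument. Your reduction of everything to the identity $\Hom_{D(\Mod\sA)}(y(A),X[n])=H^n(X)(A)$ is sound (each $y(A)$ is projective, evaluation at $A$ is exact and commutes with coproducts), and the final appeal to Theorem \ref{neem1} is exactly how the identification of compacts is meant to go --- note only that you should check that the essential image of $K^b(\sA)^\natural$ is \emph{thick} in $D(\Mod\sA)^c$, which follows from full faithfulness plus idempotent completeness of $K^b(\sA)^\natural$.

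One step is misstated and would fail as written: in the density argument you claim that $X$ is the homotopy colimit of its good truncations $\tau^{\ge -n}X$ and that these are bounded above. In the paper's (cohomological) conventions $\tau^{\ge -n}X$ is bounded \emph{below}, and the canonical maps run $X\to\tau^{\ge -n}X$, so these truncations form an inverse system whose homotopy \emph{limit}, not colimit, recovers $X$. The correct telescope is $X\simeq\hocolim_n\tau^{\le n}X$, where each $\tau^{\le n}X$ lies in $D^-(\Mod\sA)$ and hence, by your free-resolution-plus-brutal-truncation argument, in $\sL=\langle y(\sA)\rangle^\oplus$; since $\sL$ is localising it is closed under such telescopes, giving $\sL=D(\Mod\sA)$. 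This is a fixable slip of direction rather than a conceptual gap, but as literally written the density step does not go through.
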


(For a self-contained proof, see the first version of this paper at \url{https://arxiv.org/abs/1506.08385}.)

 Note that $D(\Mod\sA)$ is pseudo-abelian since it has representable infinite direct sums, which justifies the last assertion. Also, by  Balmer-Schlichting \cite{bs}, the categories $K^b(\sA)^\natural$ and $K^b(\sA^\natural)$ are equivalent, although we shall not use it.

We now go back to the results of \S \ref{r6.1}. By Proposition \ref{l8.1}, the category $\sS=D(\Mod\sA)$ verifies the hypotheses of Theorem \ref{neem2}, with $\sS^c=K^b(\sA)^\natural$. Thus, if $R$ is a set of objects of $K^b(\sA)$, $\sR=\langle R\rangle\subseteq K^b(\sA)$ and $\sR^\oplus=\langle y(R)\rangle^\oplus\subseteq D(\Mod\sA)$, the conclusions of Theorems \ref{neem2} and \ref{neem3} apply. So:
\begin{enumerate}
\item $\sR^c=\sR^\natural$ and $(D(\Mod \sA)/\sR^\oplus)^c=(K^b(\sA)/\sR)^\natural$; in particular, $K^b(\sA)/\sR\to D(\Mod \sA)/\sR^\oplus$ is fully faithful and dense.
\item The projection functor $D(\Mod\sA)\to D(\Mod\sA)/\sR^\oplus$ has a right adjoint, which itself has a right adjoint.
\end{enumerate}

Let now $S$ be a set of morphisms in $\sA$ which contains all identities and is stable under direct sums. By \cite[Th. A.3.3]{birat-pure}, the category $\sB=S^{-1}\sA$ is additive, as well as the localisation functor $Q:\sA\to \sB$. Thus the setting of \S \ref{s.not} applies, and the functor $Q^*:\Mod\sB\to \Mod\sA$ is fully faithful. We may identify $S$ to a set of morphisms in $K^b(\sA)$ via the natural embedding $\eta:\sA\ni A\mapsto A[0]\in K^b(\sA)$. If we take for $R=R_S$ the set of cones of $\eta(s)$ for $s\in S$, it is natural to ask about the relationship between the above localisations and the categories $K^b(\sB)$, $D(\Mod\sB)$. The answer is given by the following theorem. 

\begin{thm}\label{tC.1} Let $R_S$ be as above, and write $\sR_S, \sR_S^\oplus$ for the corresponding triangulated subcategories of $K^b(\sA)$ and $D(\Mod\sA)$.\\
 a) The functor $K^b(Q):K^b(\sA)\to K^b(\sB)$ factors through $K^b(\sA)/\sR_S$. The functor $LQ_!:D(\Mod\sA)\to D(\Mod\sB)$ factors through $D(\Mod\sA)/\sR_S^\oplus$. This yields a naturally commutative diagram
\[\xymatrix{
\sA\ar[d]_Q\ar[r]^{\eta_\sA} &K^b(\sA)\ar[d]_{\bar Q}\ar[r]^{\iota_\sA}& D(\Mod\sA)\ar[d]_{\bar Q_!}\\
\sB\ar[r]^{\bar \eta_\sA}\ar[rd]^{\eta_\sB}&K^b(\sA)/\sR_S\ar[r]^{\bar\iota_\sA}\ar[d]^w& D(\Mod\sA)/\sR_S^\oplus\ar[d]^{w_\oplus}\\
&K^b(\sB)\ar[r]^{\iota_\sB}&D(\Mod\sB)
}\]
in which all functors not starting from $\sA$ or $\sB$ are triangulated.\\
b) The functor $\bar\iota_\sA$ is fully faithful, has dense image, and identifies $(K^b(\sA)/\sR_S)^\natural$ with the full subcategory of compact objects of $D(\Mod\sA)/\sR_S^\oplus$.\\
c) The functor $\bar Q_!$ has a (fully faithful) right adjoint $\bar Q^*$, which itself has a right adjoint $\bar Q_*$. The essential image of $\bar Q^*$ is $D_{\Mod\sB}(\Mod\sA)$ (see \eqref{eq3.4}), where $\Mod\sB$ is embedded in $\Mod\sA$ by means of $Q^*$.\\
d) Via $\bar Q^*$, the natural $t$-structure of $D(\Mod\sA)$ induces a $t$-structure on $D(\Mod\sA)/\sR_S^\oplus$, with heart $\Mod\sB$; the functor $\bar Q_!$ (\resp $\bar Q_*$) is  right (\resp left) $t$-exact.\\
e) The functor $\bar \eta_\sA$ is fully faithful; for $B_1,B_2\in \sB$ and $M\in \Mod\sB$, we have 
\begin{align*}
(K^b(\sA)/\sR_S)(\bar\eta_\sA(B_1),M[i])&=0 \quad \text{for } i\ne 0\\
(K^b(\sA)/\sR_S)(\bar\eta_\sA(B_1),\bar\eta_\sA(B_2)[i])&=0 \quad \text{for } i>0.
\end{align*}
f) The obvious functor
\[\phi:D(\Mod\sB)\to D_{\Mod\sB}(\Mod\sA)\simeq D(\Mod\sA)/\sR_S^\oplus\]
is right adjoint to $w_\oplus$, $t$-exact and conservative; it induces the identity on the hearts.\\
g) The functor $w_\oplus$ is right $t$-exact and induces the identity on the hearts; its restriction to $D^-(\Mod\sA)/\sR_S^\oplus$ is conservative. The functor $w$ is conservative as well.\\
h) If $Q_!:\Mod\sA\to \Mod\sB$ is exact, $w_\oplus$ is an equivalence of categories, and so is $w$ after pseudo-abelian completions.
\end{thm}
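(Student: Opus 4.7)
The plan is to deduce all eight parts of Theorem \ref{tC.1} from three ingredients: Keller's Proposition \ref{l8.1}, the Neeman-style results of \S\ref{r6.1} applied with $R=R_S$, and the abelian-categorical description of $\Mod\sB$ as the full subcategory of $\Mod\sA$ of modules on which $S$ acts invertibly. Parts (a) and (b) are essentially formal. For (a), $K^b(Q)$ annihilates every cone in $R_S$ (cones of isomorphisms vanish) so factors through $K^b(\sA)/\sR_S$, while $LQ_!$ sends each $y_\sA(\mathrm{cone}(s))$ to $y_\sB(\mathrm{cone}(Q(s)))=0$ since $Q_!$ intertwines the two Yoneda embeddings; the universal properties of the quotients then yield $\bar Q$ and $\bar Q_!$. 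Part (b) is a direct application of Theorem \ref{neem2} to $\sS=D(\Mod\sA)$ and the set $R=R_S$, with compacts identified via Proposition \ref{l8.1}.

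For (c), Theorem \ref{neem3} (i), (v) furnishes a fully faithful right adjoint $\bar Q^*$ to $\bar Q_!$ together with a further right adjoint $\bar Q_*$; its essential image is $(\sR_S^\oplus)^\perp$ by Theorem \ref{neem3} (ii). The key calculation is the identification $(\sR_S^\oplus)^\perp=D_{\Mod\sB}(\Mod\sA)$: unpacking the orthogonality via the exact triangle attached to $y_\sA(s)$ and applying Yoneda, $C\in(\sR_S^\oplus)^\perp$ iff each $H^i(C)$ sends every $s\in S$ to an isomorphism, which means exactly $H^i(C)\in Q^*\Mod\sB$. Since $Q^*\Mod\sB$ is a Serre subcategory of $\Mod\sA$ (a five-lemma check using exactness of $Q^*$), $D_{\Mod\sB}(\Mod\sA)$ inherits a $t$-structure from $D(\Mod\sA)$ with heart $\Mod\sB$, proving (d); the one-sided $t$-exactness of $\bar Q_!$ and $\bar Q_*$ then follows from Lemma \ref{lA.5}. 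For (f), the exact functor $Q^*:\Mod\sB\to\Mod\sA$ induces $\phi:D(\Mod\sB)\to D_{\Mod\sB}(\Mod\sA)\simeq D(\Mod\sA)/\sR_S^\oplus$, which is right adjoint to $w_\oplus$ by the underived $(Q_!,Q^*)$ adjunction (using that the target lands in $(\sR_S^\oplus)^\perp$), manifestly $t$-exact by exactness of $Q^*$, and conservative because $Q^*$ is fully faithful on hearts. Part (e) then reduces to (c) and (d): under $\bar Q^*$, $\bar\eta_\sA(B)$ corresponds to $Q^*y_\sB(B)[0]$, so $\bar\eta_\sA$ factors as the Yoneda embedding into $\Mod\sB$ (fully faithful, Proposition \ref{p5.3}) followed by $\phi$ on hearts, while the $\Ext$-vanishings express the projectivity of representable modules.

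Part (g) follows from (f) via Lemmas \ref{lA.5} and \ref{lC.1}: $t$-exactness of $\phi$ translates to right $t$-exactness and identity-on-hearts for $w_\oplus$, then Lemma \ref{lC.1} (ii) gives conservativity on $D^-(\Mod\sA)/\sR_S^\oplus$; for $w$, note that $\bar\iota_\sA$ lands in the bounded-above part (its image consists of compact objects, hence of bounded complexes), and descend. The main obstacle, and the point where the hypothesis $Q_!$ exact is genuinely used, is (h). Under this hypothesis, one has $R\Hom_{\Mod\sA}(Q^*M,Q^*N)\simeq R\Hom_{\Mod\sB}(LQ_!Q^*M,N)\simeq R\Hom_{\Mod\sB}(M,N)$ via $Q_!Q^*=\mathrm{id}$, upgrading $\phi$ to a fully faithful functor, so that the counit $w_\oplus\phi=\mathrm{id}$ is an isomorphism. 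Moreover $w_\oplus$ is now conservative on all of $D(\Mod\sA)/\sR_S^\oplus$: for $X\in D_{\Mod\sB}(\Mod\sA)$ with $H^i(X)=Q^*M_i$, exactness of $Q_!$ gives $H^i(w_\oplus X)=Q_!H^i(X)=M_i$, so $w_\oplus X=0$ forces $X=0$. The triangle identities then imply that the unit $X\to\phi w_\oplus X$ becomes the identity after applying $w_\oplus$; by conservativity it is itself an isomorphism, so $w_\oplus$ is an equivalence with inverse $\phi$. Restricting to compact objects and invoking (b) together with Proposition \ref{l8.1} finally yields the equivalence $(K^b(\sA)/\sR_S)^\natural\iso K^b(\sB)^\natural$, completing the proof.
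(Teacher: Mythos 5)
Most of your argument runs parallel to the paper's: (a)--(d), (f) and (g) are obtained from the same ingredients (Proposition \ref{l8.1}, Theorems \ref{neem2} and \ref{neem3} applied to $R=R_S$, the Yoneda/projectivity computation identifying $(\sR_S^\oplus)^\perp$ with $D_{\Mod\sB}(\Mod\sA)$, and Lemmas \ref{lA.5} and \ref{lC.1}). A minor quibble in (d): $Q^*\Mod\sB$ is thick in the two-out-of-three sense required for \eqref{eq3.4}, but it is not a Serre subcategory (it need not be closed under subobjects); your snake-lemma check proves exactly what is needed, so nothing breaks. Your endgame for (h) is genuinely different from, and somewhat cleaner than, the paper's: instead of showing that the essential image of $\phi$ is localising and dense, you prove $\phi$ fully faithful (counit invertible) and $w_\oplus$ conservative on unbounded objects --- the latter using exactness of $Q_!$ to compute cohomology termwise --- and let the triangle identities force the unit to be invertible. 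That is correct.

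The genuine problem is in (e). You assert that under $\bar Q^*$ the object $\bar\eta_\sA(B)$ ``corresponds to $Q^*y(B)[0]$'', i.e.\ that $\bar Q^*\bar Q_!\,y(A)[0]$ is concentrated in degree $0$, and you deduce both the full faithfulness of $\bar\eta_\sA$ and the second vanishing from the resulting factorisation of $\bar\iota_\sA\bar\eta_\sA$ through $\phi$ on hearts. This is false in general: right $t$-exactness of $\bar Q_!$ only places $\bar Q^*\bar Q_!\,y(A)[0]$ in degrees $\le 0$ with $H^0=Q^*Q_!\,y(A)$, and the lower cohomology need not vanish. The paper's Remark \ref{r5.2} is a counterexample in the motivating case $\sA=\SmCor$, $S=S_b$: for a smooth projective surface $Y$ over an algebraically closed field one has $h_1^\o(Y)=\sH^{-1}(\bar\gamma^*M^\o(Y))\ne 0$, so $M^\o(Y)=\bar\eta(Y)$ does not lie in the heart (this is precisely the point of \S\ref{r5.1}, where the implication from ``$\sS(S,A[i])=0$ for $i\ne 0$'' to ``$S\iso H^0(S)[0]$'' is shown to fail here). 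Note also that if your claim held, $\bar\iota_\sA\bar\eta_\sA$ would factor through $\phi$ with no hypothesis on $Q_!$, which sits uneasily with the way (h) is stated. The conclusions of (e) are nevertheless true, by a different mechanism: the first vanishing is the adjunction $\bar Q_!\dashv\bar Q^*$ plus projectivity of $y(A_1)$ in $\Mod\sA$; the second follows from the first together with the fact that $\bar\eta_\sA(B_2)$ lies in degrees $\le 0$ (not in degree $0$); and full faithfulness of $\bar\eta_\sA$ comes from
\[
\Hom(\bar Q_!\,y(A_1)[0],\bar Q_!\,y(A_2)[0])\simeq H^0(\bar Q^*\bar Q_!\,y(A_2))(A_1)=Q^*Q_!\,y(A_2)(A_1)=\sB(Q(A_1),Q(A_2)),
\]
where only the degree-$0$ part intervenes, again thanks to projectivity.
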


In part a) if this theorem, note that the total left derived functor $LQ_!$ exists, \eg by \cite[Th. 14.4.3]{ka-sch}.

\begin{proof} a) is obvious since $S$ gets inverted in $K^b(\sB)$ and $LQ_!y(A)[0]=y(Q(A))[0]$ for $A\in \sA$. b) and c) only repeat the points (1) and (2) above, except for the description of the image of $\bar Q^*$. 

Let $C\in D(\Mod\sA)$: by definition, $C\in \IM\bar Q^*$ if and only if the map $D(\Mod\sA)(y(B)[i],C)\by{s^*} D(\Mod\sA)(y(A)[i],C)$ is an isomorphism for any $s\in S$, $s:A\to B$, and any $i\in\Z$. Since $y(A)$ and $y(B)$ are projective in $\Mod\sA$, this isomorphism may be rewritten: $H_i(C)(B)\iso H_i(C)(A)$. Thus $C\in \IM\bar Q^*$ $\iff$ $H_i(C)\in \IM Q^*$ for all $i\in \Z$. d) follows from c) via Lemma \ref{lA.5}.

In e), since $Q:\sA\to \sB$ is essentially surjective we may write $B_i\simeq Q(A_i)$ for $A_1,A_2\in\sA$. Using the full faithfulness of $\bar \iota_\sA$, the first vanishing then follows from adjunction and the projectivity of $y(A_1)$, and the second one follows from the first and the right $t$-exactness of $\bar Q_!$. It remains to prove the full faithfulness of $\bar\eta_\sA$: we have
\begin{multline*}
(K^b(\sA)/\sR_S)(\bar\eta_\sA(Q(A_1)),\bar\eta_\sA(Q(A_2)))\iso\\
 (D(\Mod\sA)/\sR_S^\oplus)(\bar Q_!y(A_1)[0],\bar Q_!y(A_2)[0])\\
\simeq  D(\Mod\sA)(y(A_1)[0],\bar Q^*\bar Q_!y(A_2)[0])\\
\simeq  H^0(\bar Q^*\bar Q_!y(A_2))(A_1)=Q^*Q_!y(A_2)(A_1).
\end{multline*}

Here we used again the right $t$-exactness of $\bar Q_!$, plus Lemma \ref{lA.5}. But we have
\begin{multline*}
Q^*Q_!y(A_2)(A_1)= Q_!y(A_2)(Q(A_1))\\=y(Q(A_2)(Q(A_1))=\sB(Q(A_1),Q(A_2))
\end{multline*}
which concludes the proof.

f) follows from the adjunction $(\bar Q_!,\bar Q^*)$, the adjunction $(LQ_!,RQ^*)$ and the dual of Proposition \ref{padj} a); conservativity follows from Lemma \ref{lC.1} (i).

In g), the first two assertions follow from f) in view of Lemma \ref{lA.5}. The next claim is a special case of Lemma \ref{lC.1} (ii). The conservativity of $w$ now follows from the full faithfulness of $\iota_\sB$.

Let us prove h). In view of f), to show that $w_\oplus$ is an equivalence of categories it suffices to show that so is $\phi$.  Since $Q_!$ and $Q^*$ are exact,  the identity
\[L(Q_!Q^*) \simeq LQ_! LQ^*\]
holds trivially in $D(\Mod\sB)$. In particular, the counit map $LQ_!LQ^*\Rightarrow Id$ is an isomorphism and $LQ^*$ is fully faithful. Since $\bar Q^*$ is also fully faithful, we find that $\phi$ is fully faithful. Since it clearly commutes with infinite direct sums, its essential image $\sI$ is localising and to prove the essential surjectivity of $\phi$ it remains to show that $\sI$ is dense. Using b), we reduce to prove that $\sI$ contains the image of $\bar\iota_\sA\bar\eta_\sA$.

Let $B\in \sB$ and $A\in \sA$ such that $B=Q(A)$. Then $\bar\iota_\sA\bar\eta_\sA(B)=\bar Q_! y(A)[0]$, while $\phi\iota_\sB\eta_\sB(B)=LQ^*y(B)[0]=LQ^*LQ_!y(A)[0]$. We must show that the cone of the counit map $LQ^*LQ_!y(A)[0]\to y(A)[0]$ belongs to $\sR_S^\oplus$, or equivalently that it is left orthogonal to $\sI$. It suffices to show that it is left orthogonal to $LQ^*M[i]$ for any $M\in \Mod\sB$ and any $i\in \Z$. This follows easily from the full faithfulness of $LQ^*$.

The claim for $w$ now follows, since the equivalence $w_\oplus$ induces an equivalence between the subcategories of compact objects (see b) and Proposition \ref{l8.1}).
\end{proof}

\begin{rks} 1) One can show that,  in h), $Q_!$ is exact provided $S$ admits a calculus of right fractions (compare \cite[I.3, Prop. 1.1]{gz}).\\
2) In the terminology of Bondarko \cite[Def. 4.3.1 1]{bondarko}, Theorem \ref{tC.1} e) says that the full subcategory $\sB\by{\bar \eta_\sA} K^b(\sA)/\sR^\o_S$ is \emph{negative}. Since $\bar\eta_\sA(\sB)$ generates $K^b(\sA)/\sR^\o_S$, the latter carries a \emph{weight structure} with heart $\bar\eta_\sA(\sB)^\natural$ by \loccit, Th. 4.3.2. One can then check that the functor $w$ coincides with the weight complex functor $t$ of \loccit, Th. 3.3.1, which is conservative by part V of the latter theorem. Theorem \ref{tC.1} provides an alternative proof of this conservativity, using the natural $t$-structure of $D(\Mod\sA)/\sR_S$: this seems related to Bondarko's notion of adjacence between a weight structure and a $t$-structure \cite[\S 4.4]{bondarko}. In \cite[Th. 4.2.2]{bond-sos}, Bondarko and Sosnilo give a direct proof of Theorem \ref{tC.1} e), without using the full embedding $K^b(\sA)/\sR_S\inj D(\Mod\sA)/\sR_S^\oplus$.
\end{rks}

\begin{ex} Let $\sX$ be an additive subcategory of $\sA$ and let $\sI_\sX$ be the ideal of morphisms in $\sA$ which factor through an object of $\sX$, compare \cite[Ex. 1.3.1]{ak}: the projection functor $Q:\sA\to\sA/\sI_\sX$ is universal among additive functors mapping all objects of $\sX$ to $0$. Then $Q$ is a localisation. Indeed, let
\[S_\sX=\{s\in Ar(\sA)\mid s \text{ becomes invertible in } \sA/\sI_\sX\}.\]

Since $S_\sX$ contains all identities and is stable under direct sums, the localisation $S_\sX^{-1}\sA$ is additive \cite[Th. A.3.3]{birat-pure}; to show that the natural functor $S_\sX^{-1}\sA\to \sA/\sI_\sX$ is an equivalence of categories, it suffices to show that any object $X\in\sX$ maps to $0$ in $S_\sX^{-1}\sA$. Let $s:0\to X$ and $t:X\to 0$ be the canonical maps. Then $st$ and $ts$ both become invertible in $\sA/\sI_X$, hence $s\in S_\sX$.
\end{ex}

\end{document}